\DeclareFontFamily{OT1}{rsfs}{}
\DeclareFontShape{OT1}{rsfs}{n}{it}{<-> rsfs10}{}
\DeclareMathAlphabet{\mathscr}{OT1}{rsfs}{n}{it}
\newcommand{\Z}{{\mathbb Z}}
\newcommand{\C}{{\mathbb C}}
\newcommand{\Q}{{\mathbb Q}}
\newcommand{\R}{{\mathbb R}}
\newcommand{\A}{\mathbb{A}}
\newcommand{\Br}{\mathrm{Br}}
\newcommand{\cK}{\mathscr{K}}
\newcommand{\fX}{\mathfrak{X}}
\newcommand{\cO}{\mathscr{O}}
\newcommand{\Ga}{\mathrm{Gal}}
\newtheorem{thm}{Theorem}[section]
\newtheorem{lemma}[thm]{Lemma}
\newtheorem{prop}[thm]{Proposition}
\newtheorem{cor}[thm]{Corollary}
\newcommand{\cG}{\mathcal{G}}
\newcommand{\gen}{\mathbf{gen}}
\begin{document}

\title[Weakly commensurable groups]{Weakly commensurable groups, with applications to
differential geometry}

\author[G.~Prasad]{Gopal Prasad}

\author[A.~Rapinchuk]{Andrei S. Rapinchuk}

\begin{abstract}
This article is an expanded version of the talk delivered by the
second-named author at the GAAGTA conference. We have included
a discussion of very recent results and conjectures on absolutely
almost simple algebraic groups having the same maximal tori and
finite-dimensional division algebras having the same maximal
subfields; in particular Theorem \ref{T:Finite-NF} contains a
finiteness result in this direction that appears in print for the first
time.
\end{abstract}

\address{Department of Mathematics, University of Michigan, Ann
Arbor, MI 48109}

\email{gprasad@umich.edu}

\address{Department of Mathematics, University of Virginia,
Charlottesville, VA 22904}

\email{asr3x@virginia.edu}

\maketitle

\section{Introduction}\label{S:I}

\noindent {\bf 1.1.} Let $M$ be a Riemannian manifold. In
differential geometry one considers the following sets of data
associated with $M$:

\vskip3mm

$\bullet$ \ \parbox[t]{15.5cm}{$\mathcal{E}(M)$, the {\it spectrum
of the Laplace-Beltrami operator} (for the purposes of this article,
\newline the spectrum is the collection of eigenvalues with their
multiplicities);}

\vskip2mm

$\bullet$ \ \parbox[t]{15.5cm}{$\mathcal{L}(M)$, the {\it length
spectrum}, i.e. the collection of lengths of all closed geodesics in
$M$ {\it with} multiplicities;}

\vskip2mm

$\bullet$ \ \parbox[t]{15.5cm}{$L(M)$, the {\it weak length
spectrum}, i.e. the collection of lengths of all closed geodesics
{\it without} multiplicities.}

\vskip3mm

\noindent (Of course, in order to ensure that the multiplicities in
the definition of $\mathcal{E}(M)$ and $\mathcal{L}(M)$ are finite,
one needs to impose some additional conditions on $M$; however we
will not discuss these technicalities here particularly because in
the case of  compact locally symmetric spaces, which are the most important classes
of manifolds to be considered in this article, problems of this nature do not arise.)
Two Riemannian manifolds $M_1$ and $M_2$ are called
{\it commensurable} if they admit a common finite-sheeted cover $M$,
i.e. if there is a diagram
$$
\xymatrix{ & M \ar[ld]_{\pi_1} \ar[rd]^{\pi_2} & \\ M_1 &  & M_2}
$$
in which $M$ is a Riemannian manifold and $\pi_1 , \pi_2$ are
finite-sheeted locally isometric covering maps.

\vskip2mm

We can now formulate the following question that has attracted the
attention of mathematicians working in different areas of
analysis and geometry for quite some time:

\vskip1mm

\noindent {\it Let $M_1$ and $M_2$ be Riemannian manifolds. Are
$M_1$ and $M_2$ necessarily isometric/commensurable if:

\vskip2mm

{\rm (1)} \ $\mathcal{E}(M_1) = \mathcal{E}(M_2)$, i.e. $M_1$ and
$M_2$ are \emph{isospectral};

\vskip2mm

{\rm (2)} \ $\mathcal{L}(M_1) = \mathcal{L}(M_2)$ (or $L(M_1) =
L(M_2)$), i.e. $M_1$ and $M_2$ are \emph{iso-length spectral};

\vskip2mm

{\rm (3)} \ $\Q \cdot L(M_1) = \Q \cdot L(M_2)$, i.e. $M_1$ and
$M_2$ are \emph{length-commensurable}. }

\vskip3mm

\noindent Among conditions (1)-(3), the condition of isospectrality
is definitely the most famous one. In fact, the question of whether
(or when) isospectrality implies isometricity is best known in its
informal formulation due to Mark Kac \cite{Kac} (1966), which is
{\it ``Can you hear the shape of a drum?"} For historical accuracy,
we should point out that the question itself was analyzed in various
forms long before \cite{Kac}, and this analysis had  provided
substantial evidence in favor of the affirmative answer. In
particular, H.~Weyl in 1911 proved a result, which was subsequently
sharpened and generalized by various authors and is currently known
as  {\it Weyl's Law}. It states that if $M$ is an
$n$-dimensional compact Riemannian manifold, and $0 \leqslant \lambda_0
\leqslant \lambda_1 \leqslant \lambda_2 \leqslant \cdots $ is the
sequence of the eigenvalues of the Laplacian of $M$, then
$$
N(\lambda) = \frac{\mathrm{vol}(M)}{(4\pi)^{n/2}
\Gamma\left(\frac{n}{2} + 1\right)} \lambda^n + o(\lambda^n),
$$
where
$$
N(\lambda) = \# \{ j \: \vert \: \sqrt{\lambda_j} \leqslant \lambda
\}
$$
and $\Gamma(s)$ is the $\Gamma$-function (see \cite{ANPS},
\cite{Mue} for a discussion of Weyl's Law, its history and
applications in mathematics and physics). This implies that the
distribution of the eigenvalues alone allows one to recover such
invariants of $M$ as its dimension and volume, and therefore these
invariants are shared by all isospectral Riemannian manifolds.
Moreover, isospectral compact Riemannian manifolds share the heat
kernel invariants (see  \cite{SKY} and references therein). These
powerful analytic techniques and results led one to believe that
isospectral manifolds should indeed be isometric. In 1964, however,
Milnor \cite{Milnor} gave an example of two isospectral, but not
isometric, flat 16-dimensional tori. Then in 1980, M.-F.\,Vign\'eras
\cite{Vig} used the arithmetic of quaternion algebras to construct
nonisometric isospectral Riemann surfaces. A few years later, Sunada
\cite{Sun} found a different method for constructing isospectral
and iso-length spectral, but not isometric, Riemannian manifolds.
Sunada's method relied  on rather simple group-theoretic
properties of the fundamental group, which made it applicable in
many situations. In fact, since its discovery, this method has been
implemented in a variety of situations to produce examples of
nonisometric manifolds for which various geometric invariants are equal (cf., for example,
\cite{LMNR}). Notice, however, that  the
constructions of Vign\'eras and Sunada {\it always} produce
commensurable manifolds. So, it appears that the ``right question"
regarding isospectral manifolds should be {\it whether two
isospectral (compact Riemannian) manifolds are necessarily
commensurable}. While the answer to this question is still negative
in the general case (cf.\,Lubotzky et al.\,\cite{LuSV}), our results
\cite{PR1}, \cite{PR-invol}, \cite{PR-Fields} show that the answer
is in the affirmative for many compact arithmetically defined
locally symmetric spaces (cf.\,Theorem \ref{T:WC12}). Before our
work, such results were available only for  arithmetically defined
hyperbolic 2- and 3-manifolds, cf.\,\cite{CHLR} and \cite{Reid}.

\vskip2mm

Next, we turn to the isometricity question formulated in terms of
condition (2) of iso-length spectrality. It is worth pointing out
that this question is dictated even by  (na\"{\i}ve) geometric
intuition. Indeed, if we take $M_i$ to be the 2-dimensional
Euclidean sphere of radius $r_i$ for $i = 1, 2$, then $L(M_i) = \{
2\pi r_i \}.$ So, in this case the condition of iso-length
spectrality $L(M_1) = L(M_2)$ does imply the isometricity of $M_1$
and $M_2$, and it is natural to ask if this sort of conclusion can
be drawn in a more general situation. Superficially, this question
does not seem to be connected with isospectrality, but in fact using
the trace formula one proves that if $M_1$ and $M_2$ are compact
locally symmetric spaces of nonpositive curvature then
$$
\mathcal{E}(M_1) = \mathcal{E}(M_2) \ \ \Rightarrow \ \ L(M_1) =
L(M_2)
$$
(cf.\,\cite[Theorem 10.1]{PR1}). It follows that nonisometric
isospectral locally symmetric spaces (in particular, those
constructed by Vign\'eras, Sunada and Lubotzky et al.) {\it
automatically} provide examples of nonisometric iso-length spectral
manifolds, and again one should ask about commensurability rather
than isometricity of iso-length spectral manifolds.

While there are important open questions about commensurability
expressed in terms of the classical conditions of isospectrality and
iso-length spectrality (the most famous one being whether two
isospectral Riemann surfaces are necessarily commensurable), it
seems natural to suggest that a systematic study of commensurability
should involve (or even be based upon) conditions that are
invariant under passing to a commensurable manifold. From this
perspective, one needs to point out that conditions (1) and (2), of
isospectrality and iso-length spectrality, respectively, do not
possess this property, while condition (3) of
length-commensurability does. Note that (3) is formulated in terms
of the set $\Q \cdot L(M)$ which is sometimes called the {\it
rational length spectrum}. While this set is not as closely related
to the geometry of $M$ as $L(M)$, it nevertheless has several very
convenient features. First, it indeed does not change if $M$ is
replaced by a commensurable manifold. Second, unlike $L(M)$, which
has been completely identified in very few situations, $\Q \cdot
L(M)$ can be described in more case. Here is one example.

\vskip2mm

\noindent {\bf 1.2. Example.} Let $\mathbb{H} = \{ x + iy \in \C \:
\vert \: y > 0 \}$ be the upper half-plane with the standard
hyperbolic metric $ds^2 = y^{-2}(dx^2 + dy^2)$. It is well-known
that the standard isometric action of $\mathrm{SL}_2(\R)$ on
$\mathbb{H}$ by fractional linear transformations allows us to
identify $\mathbb{H}$ with the symmetric space $\mathrm{SO}_2(\R)
\backslash \mathrm{SL}_2(\R)$. Let $\pi \colon \mathrm{SL}_2(\R) \to
\mathrm{PSL}_2(\R)$ be the canonical projection. Given a discrete
subgroup $\Gamma \subset \mathrm{SL}_2(\R)$ containing $\{ \pm 1 \}$
with torsion-free image $\pi(\Gamma)$, the quotient $M =
\mathbb{H}/\Gamma$ is a Riemann surface. It is well-known that
closed geodesics in $M$ correspond to nontrivial semi-simple
elements in $\Gamma$ (cf.\,\cite[2.1]{PR-Gen}); the precise nature of
this correspondence is not important for us as we only need
information about the length. One shows that if $c_{\gamma}$ is the
closed geodesic in $M$ corresponding to a semi-simple element
$\gamma \in \Gamma$, $\gamma \neq \pm 1$, then its length is given
by
\begin{equation}\label{E:length1}
\ell(c_{\gamma}) = \frac{2}{n_{\gamma}} \cdot \vert \log \vert
t_{\gamma} \vert \vert
\end{equation}
where $t_{\gamma}$ is an eigenvalue of $\gamma$ (note that since
$\pi(\Gamma)$ is discrete and torsion-free, any semi-simple $\gamma
\in \Gamma$ is automatically hyperbolic, i.e. $t_{\gamma} \in \R$),
and $n_{\gamma} \geqslant 1$ is an integer which geometrically is
the winding number and algebraically is the index
$[C_{\Gamma}(\gamma) : \{ \pm 1 \} \cdot \langle \gamma \rangle]$,
where $C_{\Gamma}(\gamma)$ is the centralizer of $\gamma$ in
$\Gamma$ (in other words, $C_{\Gamma}(\gamma) = T \cap \Gamma$ where
$T$ is the maximal $\R$-torus of $\mathrm{SL}_2$ containing
$\gamma$). So, the length spectrum $L(M)$ consists of the values
$\ell(c_{\gamma})$ where $\gamma$ runs over semi-simple elements in
$\Gamma \setminus \{ \pm 1 \}$ with the  property that
$C_{\Gamma}(\gamma) = \{ \pm 1 \} \cdot \langle \gamma \rangle$
(primitive semi-simple elements), while the rational length spectrum
$\Q \cdot L(M)$ is the union of the sets $\Q \cdot \log \vert
t_{\gamma} \vert$, where $\gamma$ runs over all semi-simple $\gamma
\in \Gamma \setminus \{ \pm 1 \}$, and in fact it suffices to take
just one element out of every class of elements having the same
centralizer in $\Gamma$, i.e. one element in $(T \cap \Gamma)
\setminus \{ \pm 1 \}$ for every maximal $\R$-torus $T$ of
$\mathrm{SL}_2$ such that the latter set is non-empty. Now, let us
recall the following example which demonstrates the well-known fact
that the problem of identifying {\it primitive} semi-simple elements
is extremely difficult.

\vskip1mm

Let $D$ be a quaternion division over $\Q$ that splits over $\R$,
and let $\Gamma$ be a torsion-free arithmetic subgroup of $G(\Q)$
where $G = \mathrm{SL}_{1 , D}$. One can view $\Gamma$ as a discrete
subgroup of $G(\R) \simeq {\mathrm{SL}}_2(\R)$. Set $M = \mathbb{H}/\Gamma$. It
is well-known that the maximal subfields of $D$ are of the form $K =
\Q(\sqrt{d})$ with $d$ satisfying $d \notin {\Q^{\times}_p}^2$ for
primes $p$ where $D$ is ramified (thus, the relevant values of $d$
can be easily characterized in terms of congruences). Clearly, the
problem of describing primitive semi-simple elements in $\Gamma$
contains the problem of identifying the fundamental unit
$\varepsilon(d)$ in every such subfield with $d > 0$ (or more
precisely, the smallest unit with norm 1), which is beyond our
reach.
On the other hand, there is a well-known formula that gives {\it
some} unit in a real quadratic field $\Q(\sqrt{d})$ (assuming that
$d$ is square-free)
\begin{equation}\label{E:unit}
\eta(d) = \prod_{r = 1}^{d-1} \big[\sin \left(\frac{\pi r}{d}\right)\big]^{- \left( \frac{d}{r}
\right)},
\end{equation}
where $\displaystyle \left( \frac{d}{r} \right)$ is the Kronecker
symbol (or the character associated with the quadratic extension),
cf.\,\cite[Ch. V, \S 4, Theorem 2]{BSh}. (Recall that
$\varepsilon(d)$ and $\eta(d)$ are related by the equation $\eta(d)
= \varepsilon(d)^{2 h(d)}$, where $h(d)$ is the class number of
$\Q(\sqrt{d})$, indicating that a systematic description of
$\varepsilon(d)$ for a sufficiently general infinite sequence of
$d$'s is nearly impossible.) So, in this case the rational length
spectrum can be described as the set of all rational multiples of
$\log \eta(d)$, where $\eta(d)$ is given by (\ref{E:unit}) and  $d$
runs through positive square-free integers described by certain
congruences. (It would be interesting to see if one can give a
similar description of the rational length spectrum for other
arithmetically defined locally symmetric spaces - see
\cite[Proposition 8.5]{PR1} regarding the formula for the length of
a closed geodesic in the general case. Obviously, this will require
an intrinsic construction of (sufficiently many) units in a
$\Q$-torus, which has not been offered so far.)

\vskip2mm

This example suggests that at least in some cases the rational
length spectrum $\Q \cdot L(M)$ may be  more tractable than the
length spectrum $L(M)$ or the spectrum $\mathcal{E}(M)$ of the Laplace-Beltrami operator.  But
then the question arises if the rational length spectrum retains
enough information to characterize the commensurability class of
$M$.
So, we would like to point out that our entire work that resolved
many questions about isospectral and iso-length spectral
arithmetically defined locally symmetric spaces of absolutely simple
real algebraic groups is based on an analysis of the rational
length spectrum and length-commensurability. In fact, with just one
exception, length-commensurability and the (much) stronger condition
of isospectrality lead to the same new results about isospectral
locally symmetric spaces (see Theorem \ref{T:WC12} and the
subsequent discussion). So, we hope that the analysis of the
rational length-spectrum and the associated notion of
length-commensurability will become a standard tool in the
investigation of locally symmetric spaces. Furthermore, the notion
of length-commensurability has an algebraic counterpart, which we
termed {\it weak commensurability} (of Zariski-dense subgroup) - see
\S\ref{S:WC}. What is interesting is that the study of the geometric
problems mentioned earlier has led to a number of algebraic problems
of considerable independent interest such as characterization of
absolutely almost simple algebraic $K$-groups in terms of the
isomorphism classes of their maximal $K$-tori, and in particular,
characterizing finite-dimensional division $K$-algebras in terms of
the isomorphism classes of their maximal subfields. These questions
have been completely resolved for algebraic groups over number
fields and their arithmetic subgroups, and we will review these
results in \S\S\ref{S:Arithm}-\ref{S:Tori}, but remain an area of
active research over general fields, with some important results
obtained very recently (cf.\,\S\ref{S:Tori}). Thus, in broad terms,
our project can be described as the analysis of the consequences of
length-commensurability for locally symmetric spaces and of related
algebraic problems involving classification of algebraic groups
over general (finitely-generated) fields and the investigation of
their Zariski-dense subgroups.

\vskip2mm

\noindent {\bf 1.3. Hyperbolic manifolds.} Cumulatively, our papers
\cite{PR1}, \cite{PR-invol}, \cite{PR-Fields} and the results of
Garibaldi \cite{Gar} and Garibaldi-Rapinchuk \cite{GarR} answer the
key questions about length-commensurable {\it arithmetically
defined} locally symmetric spaces of absolutely simple real algebraic groups of
all types. In particular, we know when length-commensurability
implies commensurability (the answer depends on the Killing-Cartan type of the
group), and that in all cases the arithmetically defined locally
symmetric spaces that are length-commensurable to a given
arithmetically defined locally symmetric space form finitely many
commensurability classes. We will postpone the technical
formulations of these results until \S \ref{S:Geom}, and instead
showcase the consequences of these results for real hyperbolic manifolds.

\addtocounter{thm}{3}

Let $\mathbb{H}^d$ be the real hyperbolic $d$-space. The isometry
group of $\mathbb{H}^d$ is $\mathcal{G} = \mathrm{PO}(d , 1)$, and
by an arithmetic hyperbolic $d$-manifold we mean the quotient $M =
\mathbb{H}^d /\Gamma$ by an {\it arithmetic} subgroup $\Gamma$ of
$\mathcal{G}$ (see \S \ref{S:Arithm} regarding the notion of
arithmeticity). Previously, results about  iso-length spectral
arithmetically defined hyperbolic $d$-manifolds were available only
for $d = 2$ (Reid \cite{Reid}) and $d = 3$ (Reid et al.\,\cite{CHLR}). We obtained the following for length-commensurable
(hence, isospectral) arithmetic hyperbolic manifolds of any
dimension $d \neq 3$.
\begin{thm}
Let $M_1$ and $M_2$ be arithmetically defined hyperbolic
$d$-manifolds.

\vskip1mm

\noindent {\rm (1)} \parbox[t]{16cm}{Suppose $d$ is either even or $\equiv
3(\mathrm{mod} \: 4)$. If $M_1$ and $M_2$ are not commensurable,
then after a possible interchange of $M_1$ and $M_2$, there exists
$\lambda_1 \in L(M_1)$ such that for any $\lambda_2 \in L(M_2)$, the
ratio $\lambda_2/\lambda_1$ is transcendental; in particular, $M_1$
and $M_2$ are not length-commensurable. Thus, in this case
length-commensurability implies commensurability.}

\vskip1mm

\noindent {\rm (2)} \parbox[t]{16cm}{For any $d \equiv
1(\mathrm{mod} \: 4)$ there exist length-commensurable, but not
commensurable, arithmetic hyperbolic $d$-manifolds.}
\end{thm}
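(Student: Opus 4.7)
The plan is to reduce the theorem to the general results on weakly commensurable arithmetic subgroups (Theorem~\ref{T:WC12}) via the dictionary of Example 1.2, and then separate cases according to the Killing--Cartan type of $\cG$.

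First I would set up the algebraic translation. Writing $M_i = \mathbb{H}^d/\Gamma_i$, view $\cG = \mathrm{PO}(d,1)$ as the group of $\R$-points of an adjoint absolutely simple $\R$-group whose type is $B_m$ when $d = 2m$ and $D_m$ when $d = 2m-1$. Thus the three cases of the theorem correspond to type $B_{d/2}$ (when $d$ is even), type $D_{(d+1)/2}$ with $(d+1)/2$ even (when $d \equiv 3 \pmod{4}$), and type $D_{(d+1)/2}$ with $(d+1)/2$ odd (when $d \equiv 1 \pmod{4}$). Exactly as in Example 1.2, every closed geodesic in $M_i$ corresponds to a hyperbolic semi-simple element $\gamma \in \Gamma_i$ whose length is a rational multiple of $\log|t_\gamma|$ for an eigenvalue $t_\gamma$ of $\gamma$ in the standard representation. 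Hence length-commensurability of $M_1$ and $M_2$ is equivalent to weak commensurability of $\Gamma_1$ and $\Gamma_2$.

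For part~(1), the types $B_m$ and $D_m$ with $m$ even lie outside the exceptional list $\{A_n, D_{2n+1}, E_6\}$, so Theorem~\ref{T:WC12} gives that weakly commensurable arithmetic subgroups are commensurable, yielding commensurability of $M_1$ and $M_2$. Contrapositively, if $M_1$ and $M_2$ are not commensurable they are not length-commensurable, so (after possibly swapping) there exists $\lambda_1 \in L(M_1)$ with $\lambda_1 \notin \Q \cdot L(M_2)$, i.e.\ $\lambda_2/\lambda_1 \notin \Q$ for every $\lambda_2 \in L(M_2)$. Since both $\lambda_1$ and $\lambda_2$ are rational multiples of logarithms of algebraic numbers (namely the $|t_\gamma|$), the Gelfond--Schneider theorem forces $\lambda_2/\lambda_1$ to be transcendental for every $\lambda_2$, giving the sharper conclusion of part~(1).

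For part~(2), the plan is to exhibit explicit pairs of non-isogenous absolutely almost simple $k$-groups $G_0, G_1$ of type $D_{(d+1)/2}$ (with $(d+1)/2$ odd) defined over a suitable totally real number field $k$ such that $G_0$ and $G_1$ share all $k$-isomorphism classes of their maximal $k$-tori, and both become isomorphic to $\mathrm{Spin}(d,1)$ at one archimedean place and to the compact real form at all other archimedean places. Taking $\Gamma_i$ arithmetic in $G_i(k)$ and $M_i = \mathbb{H}^d/\Gamma_i$ then yields arithmetic hyperbolic $d$-manifolds that are weakly commensurable---hence length-commensurable---by construction, while commensurability would force $G_0$ and $G_1$ to be $k$-isogenous. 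The hardest step is producing this pair: one must exploit the failure, peculiar to type $D_n$ with $n$ odd, of the principle ``same $k$-isomorphism classes of maximal $k$-tori implies $k$-isogenous''---the very algebraic phenomenon that distinguishes the exceptional case from types $B_m$ and $D_m$ with $m$ even, and that drives the dichotomy of the theorem. The construction carried out in \cite{PR1} uses quadratic forms of dimension $d+1$ in the same genus whose orthogonal groups realize this failure, and this is the main obstacle I expect to encounter.
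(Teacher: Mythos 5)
Part (1) of your proposal follows the paper's own route: identify $\mathrm{PO}(d,1)$ as type $\textsf{B}_{d/2}$ ($d$ even) or $\textsf{D}_{(d+1)/2}$ ($d$ odd), note that for $d$ even or $d\equiv 3 \pmod 4$ the type avoids the exceptional list $\textsf{A}_n$, $\textsf{D}_{2n+1}$, $\textsf{E}_6$, invoke the weak-commensurability results for arithmetic subgroups (the relevant geometric statement is Theorem \ref{T:WC9} rather than Theorem \ref{T:WC12}, which concerns isospectral compact spaces, but this is a citation quibble), and upgrade ``ratio not in $\Q$'' to ``ratio transcendental'' by Gelfond--Schneider, which applies because the eigenvalues of elements of arithmetic lattices are algebraic. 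That part is fine and essentially identical to the intended argument.

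Part (2) has two genuine gaps. First, ``weakly commensurable --- hence length-commensurable'' is not a valid inference: the implication that is actually available (Theorem \ref{T:6-1}) goes in the opposite direction. Weak commensurability only says that \emph{some} eigenvalue of a given $\gamma_1$ satisfies a multiplicative relation with \emph{some} eigenvalue of some $\gamma_2$; it does not control the particular eigenvalue that computes the translation length, so it does not yield $\Q\cdot L(M_1)=\Q\cdot L(M_2)$. What the construction of \cite[\S 9]{PR1} actually delivers is stronger --- a correspondence between the maximal $K$-tori of the two forms compatible with the root data and with the distinguished archimedean place --- and it is this that lets one match every geodesic length up to a rational multiple. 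Second, the proposed mechanism ``quadratic forms of dimension $d+1$ in the same genus'' would defeat the purpose: forms in the same genus are everywhere locally equivalent, hence equivalent over the number field by Hasse--Minkowski, so the corresponding groups are $K$-isomorphic and, by Proposition \ref{P:WC1}, the associated arithmetic manifolds are commensurable. The construction in \cite[\S 9]{PR1} is instead Galois-cohomological: it produces \emph{non-isomorphic} $K$-forms of type $\textsf{D}_{(d+1)/2}$ (with $(d+1)/2$ odd), differing at suitable finite places, which nevertheless share the same isomorphism classes of maximal $K$-tori --- exploiting exactly the failure, for these types, of $-\mathrm{id}$ to lie in the Weyl group. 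So while your overall plan for (2) (non-isomorphic forms with the same tori, arithmetic subgroups in them, non-commensurability via Proposition \ref{P:WC1}) is the right one, the two steps above need to be repaired, not merely cited.
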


\vskip2mm

Furthermore, one can ask about {\it how different are $L(M_1)$ and
$L(M_2)$ (or $\Q \cdot L(M_1)$ and $\Q \cdot (M_2)$) given the fact
that $M_1$ and $M_2$ are \emph{not} length-commensurable}.  For
example, can the symmetric difference $L(M_1) \vartriangle L(M_2)$
be finite? Under some minor additional assumptions, we proved in
\cite{PR-Fields} that if $M_1$ and $M_2$ are non-length
commensurable arithmetically defined hyperbolic $d$-manifolds $(d
\neq 3)$, and $\mathscr{F}_i$ is the subfield of $\R$ generated by
$L(M_i)$ $(i = 1, 2)$,  then the compositum
$\mathscr{F}_1\mathscr{F}_2$ has infinite transcendence degree over
at least one of the fields $\mathscr{F}_1$ or $\mathscr{F}_2$.
(Informally, this means that if $M_1$ and $M_2$ are not
length-commensurable then the sets $L(M_1)$ and $L(M_2)$ are {\it
very} different.) In fact, the same conclusion holds true for
quotients $M_i = \mathbb{H}^{d_i}/\Gamma_i$ $(i = 1, 2)$ by {\it
any} Zariski-dense subgroups $\Gamma_i$ of  $\mathrm{PO}(d_i ,
1)$ if $d_1 \neq d_2$ (assuming that $d_1 , d_2 \neq 3$). We have
similar results for {\it complex} and {\it quaternionic}  hyperbolic
spaces.

\section{Weakly commensurable Zariski-dense subgroups}\label{S:WC}

\noindent {\bf 2.1.} The method developed  for studying the consequences of the
length-commensurability of two locally symmetric spaces is based on
translating the problem into a study of the implications of weak
commensurability of their fundamental groups. To motivate the
formal definition, let us return for a moment to the case of Riemann
surfaces which we considered in Example 1.2. Let $M_1 =
\mathbb{H}^2/\Gamma_1$ and $M_2 = \mathbb{H}^2/\Gamma_2$ be two
Riemann surfaces where $\Gamma_1 , \Gamma_2 \subset {\mathrm{SL}}_2(\R)$ are
discrete subgroups with torsion-free images in ${\mathrm{PSL}}_2(\R)$. For $i =
1, 2$, let $c_{\gamma_i}$ be a closed geodesic in $M_i$
corresponding to a semi-simple element $\gamma_i \in \Gamma_i
\setminus \{ \pm 1 \}$.  Then it follows from (\ref{E:length1}) that
$$
\ell_{\Gamma_1}(\gamma_1) / \ell_{\Gamma_2}(\gamma_2) \: \in \: \Q \
\ \ \Leftrightarrow \ \ \ \exists \:  m , n \: \in \: \mathbb{N} \ \
\text{such \ that} \ \ t^m_{\gamma_1} \: = \: t^n_{\gamma_2},
$$
or equivalently, the subgroups generated by the eigenvalues have
nontrivial intersection. This leads us to the following.

\vskip2mm

\noindent {\bf 2.2. Definition.} Let $G_1 \subset \mathrm{GL}_{N_1}$
and $G_2 \subset \mathrm{GL}_{N_2}$ be two semi-simple algebraic
groups defined over a field $F$ of characteristic zero.

\vskip1mm

\noindent (a) \parbox[t]{16cm}{Semi-simple elements $\gamma_1 \in
G_1(F)$ and $\gamma_2 \in G_2(F)$ are said to be {\it weakly
commensurable} if the subgroups of $\overline{F}^{\times}$ generated
by their eigenvalues intersect nontrivially.}

\vskip2mm

\noindent (b) \parbox[t]{16cm}{(Zariski-dense) subgroups $\Gamma_1
\subset G_1(F)$ and $\Gamma_2 \subset G_2(F)$ are {\it weakly
commensurable} if every semi-simple element $\gamma_1 \in \Gamma_1$
of infinite order is weakly commensurable to some semi-simple
element $\gamma_2 \in \Gamma_2$ of infinite order, and vice versa.}

\vskip3mm

It should be noted that in \cite{PR1} we gave a more technical, but
{\it equivalent}, definition of weakly commensurable elements, viz.
we required the existence of  maximal $F$-tori $T_i$ of $G_i$ for $i
= 1, 2$ such that $\gamma_i \in T_i(F)$ and for some characters
$\chi_i \in X(T_i)$ we have
$$
\chi_1(\gamma_1) = \chi_2(\gamma_2) \neq 1.
$$
This reformulation of (a) demonstrates that the notion of weak
commensurability does not depend on the choice of matrix
realizations of the $G_i$'s (and is in fact more convenient in our
proofs).

\vskip2mm

The above discussion of Riemann surfaces implies that if two Riemann
surfaces $M_1 = \mathbb{H}^2/\Gamma_1$ and $M_2 =
\mathbb{H}^2/\Gamma_2$ are length-commensurable, then the
corresponding fundamental groups $\Gamma_1$ and $\Gamma_2$ are
weakly commensurable. As we will see later, the same conclusion
remains valid for general locally symmetric spaces of finite volume
(cf.\,Theorem \ref{T:6-1}), but now we would like to depart from
geometry and discuss some algebraic aspects of weak
commensurability, along with a few problems of independent interest
that its analysis leads to.

\vskip2mm

From a purely algebraic point of view, the investigation of weakly
commensurable Zariski-dense subgroups fits into the classical
framework of characterizing linear groups in terms of the spectra
(eigenvalues) of its elements. However in the set-up described in
Definition 2.2 it is not obvious at all how one should match the
eigenvalues of (semi-simple) elements $\gamma_1 \in \Gamma_1$ and
$\gamma_2 \in \Gamma_2$ as generally speaking $\gamma_1$ has $N_1$
eigenvalues and $\gamma_2$ has $N_2$. In the theory of complex
representations of finite groups, one combines the eigenvalues of
elements into the character values, and organizes the information
about eigenvalues into the character table which involves {\it all}
representations. This approach appears problematic for Zariski-dense
subgroups of semi-simple algebraic groups as the ambient groups
$G_1$ and $G_2$ have infinitely many inequivalent representations,
so matching somehow their representations and requiring two elements
to have the same eigenvalues in {\it all} respective representations
is not practical, to say the least.
On the other hand, instead of considering all representations, one
could try to match the eigenvalues in a ``canonical'' matrix
realization of the ambient group, but unfortunately it is not clear
which matrix realization should be considered canonical. A
reasonable alternative to these two extreme approaches would be to
match the eigenvalues of $\gamma_1 \in \Gamma_1$ and $\gamma_2 \in
\Gamma_2$ in {\it some} two representations of $G_1$ and $G_2$,
respectively. This, however, actually brings us back to the notion
of weak commensurability. Indeed, given an algebraic $F$-group $G
\subset \mathrm{GL}_N$ and a (semi-simple) element $\gamma \in G(F)$
with eigenvalues $\lambda_1, \ldots , \lambda_N \in
\overline{F}^{\times}$, for any rational representation $\rho \colon
G \to \mathrm{GL}_{N'}$, every eigenvalue of $\rho(\gamma)$ is of
the form $\lambda^{m_1}_1 \cdots \lambda^{m_N}_N$ (where $m_1,
\ldots , m_N$ are some integers), in other words, it is an element
of the subgroup of $\overline{F}^{\times}$ generated by the
eigenvalues of $\gamma$ in the original representation. Conversely,
any element of this subgroup can be realized as an eigenvalue of
$\rho(\gamma)$ in {\it some} rational representation $\rho$. Thus,
in the above notations, semi-simple $\gamma_1 \in \Gamma_1$ and
$\gamma_2 \in \Gamma_2$ are weakly commensurable if one nontrivial
eigenvalue of $\rho_1(\gamma_1)$ in {\it some} rational
representation $\rho_1 \colon G_1 \to \mathrm{GL}_{N'_1}$ equals an
eigenvalue of $\rho_2(\gamma_2)$ in {\it some} rational
representation $\rho_2 \colon G_2 \to \mathrm{GL}_{N'_2}$.
Consequently, weak commensurability provides a way of matching the
eigenvalues of semi-simple elements in $\Gamma_1$ and $\Gamma_2$
that is independent of the choice of the original representations of
$G_1$ and $G_2$. (Using the fact that a finitely generated linear
group over a field of characteristic zero contains a {\it neat}
subgroup of finite index (cf.\,\cite[Theorem 6.11]{Rag-book}), one
shows that if $\Gamma_1$ and $\Gamma_2$ are finitely generated and
$\Delta_i \subset G_i(F)$ is commensurable with $\Gamma_i$ then
$\Gamma_1$ and $\Gamma_2$ are weakly commensurable if and only if
$\Delta_1$ and $\Delta_2$ are weakly commensurable (see Lemma 2.3 of
\cite{PR1}). Consequently, in the analysis of weak commensurability
of finitely generated subgroups $\Gamma_1$ and $\Gamma_2$, one can
assume that the subgroups are neat, and we would like to observe
that in this case the weak commensurability of $\gamma_1 \in
\Gamma_1$ and $\gamma \in \Gamma_2$ implies the weak
commensurability of $\gamma^m_1$ and $\gamma^n_2$ for any nonzero
$m$ and $n$.)

Thus, the relation of weak commensurability of two Zariski-dense
subgroups of semi-simple algebraic groups very loosely corresponds
to the relation between two finite groups under which each column of
the character table of one group contains an element that appears in
the character table for the other group, and vice versa. Clearly,
the latter relation is inconsequential for finite groups, viz.\,it
may hold for infinitely many pairs of nonisomorphic groups, and
therefore does not impose any significant restrictions on the finite
groups at hand.

So, we find it quite remarkable that the weak commensurability of
Zariski-dense subgroups enables one to recover some characteristics
of $\Gamma_1$ and $\Gamma_2$ (and/or $G_1$ and $G_2$) - this is made
possible by the existence in $\Gamma_1$ and $\Gamma_2$ of special
elements called {\it generic elements}, see \S \ref{S:Tori} and
\cite[\S 9]{PR-Gen} . We begin our account of the results for weakly
commensurable Zariski-dense subgroups with the following.

\addtocounter{thm}{2}

\begin{thm}\label{T:WC1}
{\rm (\cite[Theorem 1]{PR1})} Let $G_1$ and $G_2$ be two connected
absolutely almost simple algebraic groups defined over a field $F$
of characteristic zero. Assume that there exist finitely generated
Zariski-dense subgroups $\Gamma_i$ of  $G_i(F)$ which are weakly
commensurable. Then either $G_1$ and $G_2$ are of the same
Killing-Cartan type, or one of them is of type $\textsf{B}_{n}$ and
the other is of type $\textsf{C}_{n}$ for some $n \geqslant 3$.
\end{thm}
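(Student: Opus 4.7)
The plan is to translate weak commensurability of the $\Gamma_i$ into a coincidence of Weyl groups of $G_1$ and $G_2$, and then invoke the classification of finite Weyl groups of irreducible root systems. First I would replace the $\Gamma_i$ by commensurable finite-index neat subgroups using \cite[Theorem 6.11]{Rag-book}; as noted in the excerpt this preserves weak commensurability, and has the extra virtue that a weak commensurability relation between $\gamma_1$ and $\gamma_2$ then implies one between $\gamma_1^m$ and $\gamma_2^n$ for all nonzero $m, n$. Next, I would invoke the existence, in any finitely generated Zariski-dense subgroup, of \emph{generic} semisimple elements, a cornerstone developed in \S\ref{S:Tori} and \cite[\S 9]{PR-Gen}. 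Pick such a $\gamma_1 \in \Gamma_1$: it lies in a unique maximal $F$-torus $T_1$ of $G_1$ whose splitting field $K_1$ has $\mathrm{Gal}(K_1/F)$ mapping onto the Weyl group $W_1 := W(G_1, T_1)$, and the eigenvalues of $\gamma_1$ are ``as free as possible'' in the sense that the only multiplicative relations among the $\chi(\gamma_1)$, $\chi \in X(T_1)$, are those forced by the Galois/Weyl action.

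Applying Definition 2.2 to $\gamma_1$ yields a semisimple element $\gamma_2 \in \Gamma_2$ of infinite order, contained in some maximal $F$-torus $T_2$ of $G_2$, together with characters $\chi_i \in X(T_i)$ satisfying $\chi_1(\gamma_1) = \chi_2(\gamma_2) \neq 1$. Acting by $\sigma \in \mathrm{Gal}(\overline{F}/F)$ on this identity (and using that $\gamma_i \in T_i(F)$) produces the family of identities $(\sigma\chi_1)(\gamma_1) = (\sigma\chi_2)(\gamma_2)$. On the left, by genericity, $\sigma\chi_1$ sweeps out the full $W_1$-orbit of $\chi_1$ in $X(T_1)$, and the corresponding values are pairwise distinct in $\overline{F}^{\times}$; on the right, $\sigma\chi_2$ lies in a single $W_2$-orbit for $W_2 := W(G_2, T_2)$. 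Matching the two Galois-equivariant sides forces a Weyl-equivariant bijection between these orbits, and applying a symmetric argument that starts from a generic element of $\Gamma_2$ upgrades this into an isomorphism between the abstract Weyl groups $W_1$ and $W_2$ together with a compatibility between their actions on the respective root lattices (up to Dynkin diagram automorphisms).

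At this point the argument becomes purely combinatorial. Since $G_1$ and $G_2$ are absolutely almost simple, one is reduced to comparing Weyl groups of irreducible root systems. A direct inspection of the classification shows that between \emph{distinct} irreducible Killing--Cartan types of rank $\geqslant 3$, the only case in which the Weyl groups are abstractly isomorphic (and, more restrictively, isomorphic in a way compatible with the root-lattice action) is $W(\textsf{B}_n) \cong W(\textsf{C}_n)$; the low-rank coincidences $\textsf{A}_3 = \textsf{D}_3$ and $\textsf{B}_2 = \textsf{C}_2$ identify the underlying root systems and therefore give the same Killing--Cartan type. Combined with the previous paragraph, this yields the claimed dichotomy.

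The main obstacle I expect lies in the middle step: converting a single weak-commensurability identity $\chi_1(\gamma_1) = \chi_2(\gamma_2)$ into enough Galois-equivariant information to recover the entire Weyl group of $G_2$ from the Weyl-orbit data on the $G_1$ side. This is exactly what the theory of generic elements is designed to supply---one needs to know that the Galois orbit of $\chi_1(\gamma_1)$ in $\overline{F}^{\times}$ has size $\vert W_1 \cdot \chi_1 \vert$ (no accidental coincidences between distinct $W_1$-translates of $\chi_1$ evaluated at $\gamma_1$), and that this freeness is inherited by $\gamma_2$ through the relation above. Once this genericity is made precise on both sides, the final comparison of Weyl-group orders and structures against the classification list is a routine case-check.
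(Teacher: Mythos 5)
Your overall strategy (generic elements, a comparison of Weyl-group data, then the classification of irreducible root systems) is the right circle of ideas, but the middle step is a genuine gap, and the endgame as you state it would not close even if that step were granted. From $\chi_1(\gamma_1)=\chi_2(\gamma_2)$ and $\gamma_i\in T_i(F)$ you do get $(\sigma\chi_1)(\gamma_1)=(\sigma\chi_2)(\gamma_2)$ for all $\sigma$, and on the left distinct Galois translates of $\chi_1$ take distinct values because $\langle\gamma_1\rangle$ is Zariski-dense in $T_1$; but on the right nothing of the kind holds: $\gamma_2$ need not be generic, $T_2$ need not even be $F$-irreducible, so distinct translates $\sigma\chi_2$ may take equal values at $\gamma_2$. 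Hence value-matching gives a map of orbits in one direction only, and the ``symmetric argument'' involves a different pair $(\gamma_2',\gamma_1')$ in different tori, so the two cannot be spliced into an isomorphism $W_1\cong W_2$. What one can honestly extract from a single pair (and what the argument behind the quoted theorem actually uses) is a Galois-equivariant linear statement: passing to the identity component of the Zariski closure of $\langle(\gamma_1,\gamma_2)\rangle$ in $T_1\times T_2$ and using that $X(T_1)\otimes_{\Z}\Q$ is irreducible under $\theta_{T_1}(\Ga(\overline{F}/F))\supset W(G_1,T_1)$, one identifies $X(T_1)\otimes\Q$ equivariantly with a subspace coming from $X(T_2)\otimes\Q$; this yields $\mathrm{rk}\,G_1\leqslant\mathrm{rk}\,G_2$ and an inequality between the orders of the Galois images, not a bijection of character orbits.

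More seriously, even granting your conclusion ``$W_1\cong W_2$ compatibly with the lattice actions \emph{up to Dynkin diagram automorphisms},'' the final ``routine case-check'' fails, because a priori $\theta_{T_2}(\Ga)$ is only contained in $\mathrm{Aut}(\Phi(G_2,T_2))=W(G_2,T_2)\rtimes\mathrm{Out}$, and there are exact coincidences $\mathrm{Aut}(\Phi(\textsf{D}_n))\cong W(\textsf{B}_n)$ $(n\geqslant 5)$, $\mathrm{Aut}(\Phi(\textsf{A}_3))\cong W(\textsf{B}_3)$, $\mathrm{Aut}(\Phi(\textsf{D}_4))\cong W(\textsf{F}_4)$; so ``isomorphic up to diagram automorphisms'' cannot separate $\textsf{B}_n$ from $\textsf{D}_n$, $\textsf{B}_3$ from $\textsf{A}_3$, or $\textsf{F}_4$ from $\textsf{D}_4$. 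The missing ingredient, which is exactly how the proof in \cite{PR1} proceeds (and is re-used verbatim in Lemma \ref{L:5-1} of the present paper), is the inner/outer structure: let $L_i$ be the minimal Galois extension of $F$ over which $G_i$ becomes an inner form, and choose $\gamma_1\in\Gamma_1$ generic \emph{over the compositum} $L=L_1L_2$ (the existence results allow this). Then genericity gives $\theta_{T_1}(\Ga(\overline{F}/L))\supset W(G_1,T_1)$, while \cite[Lemma 4.1]{PR1} gives $\theta_{T_2}(\Ga(\overline{F}/L))\subset W(G_2,T_2)$ -- the Weyl group itself, with no diagram-automorphism slack. Transporting through the equivariant identification above yields $\vert W(G_1,T_1)\vert\leqslant\vert W(G_2,T_2)\vert$, the symmetric argument with a generic element of $\Gamma_2$ gives the reverse inequality and equality of ranks, and only then does the classification step (``same rank and same Weyl group order implies same type or $\{\textsf{B}_n,\textsf{C}_n\}$, $n\geqslant 3$'') become routine. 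Without the passage to $L$ and the linear (rather than orbit-theoretic) comparison, your sketch does not reach the stated dichotomy.
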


By a famous theorem of Tits \cite{Tits-free}, for any semi-simple
group $G$ over a field $F$ of characteristic zero, the group $G(F)$
contains a free Zariski-dense subgroup. So, one or both subgroups in
Theorem \ref{T:WC1} may very well be free, and hence carry no {\it
structural} information about the ambient algebraic group.
Nevertheless, the information about the eigenvalues of  elements
expressed in terms of weak commensurability allows one to see the
type of the ambient algebraic group - we refer to this type of
phenomenon as {\it eigenvalue rigidity}.

There is one more important characteristic that can be seen through
the lense of weak commensurability. Given a Zariski-dense subgroup
$\Gamma$ of  $G(F)$, where $G$ is an absolutely almost simple algebraic group
defined over a field $F$ of characteristic zero, we let $K_{\Gamma}$
denote the subfield of $F$ generated by the traces $\mathrm{Tr} \:
\mathrm{Ad} \: \gamma$ for all $\gamma \in \Gamma$ (the so-called
{\it trace field}). According to a theorem of E.B.~Vinberg
\cite{Vin1}, $K = K_{\Gamma}$ is the minimal field of definition of
$\mathrm{Ad} \: \Gamma$, i.e. the minimal subfield of $F$ such that
one can pick a basis in the Lie algebra of $G$ in which all elements
of ${\mathrm{Ad}}\:\Gamma$ are (simultaneously) represented by matrices with
entries in $K$.

\begin{thm}\label{T:WC2}
{\rm (\cite[Theorem 2]{PR1})} Let $G_1$ and $G_2$ be two connected
absolutely almost simple algebraic groups defined over a field $F$
of characteristic zero. For $i =1,2$, let $\Gamma_i$ be a finitely
generated Zariski-dense subgroup of $G_i(F)$, and $K_{\Gamma_i}$ be
the subfield of $F$ generated by the traces ${\mathrm{Tr}}\,
\mathrm{Ad}\:\gamma$, in the adjoint representation,  of  $\gamma
\in \Gamma_i$. If $\Gamma_1$ and $\Gamma_2$ are weakly
commensurable, then $K_{\Gamma_1} = K_{\Gamma_2}.$
\end{thm}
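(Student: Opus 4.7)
The plan is to use Vinberg's theorem (cited just before the statement) to identify $K_{\Gamma_i}$ with the minimal field of definition of $\Ad\,\Gamma_i$: replacing each $G_i$ by its adjoint group, we may assume $\Ad\,\Gamma_i\subset G_i(K_{\Gamma_i})$, and weak commensurability is preserved since $\Ad$ does not change the subgroup of $\bar F^\times$ generated by the eigenvalues of a semisimple element. By symmetry it suffices to show $K_{\Gamma_2}\subseteq K_{\Gamma_1}$; equivalently, every field automorphism $\sigma$ of $\bar F$ fixing $K_{\Gamma_1}$ pointwise also fixes every trace $\Tr\Ad\,\gamma_2$ for $\gamma_2\in\Gamma_2$. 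Using the Jordan decomposition (which reduces traces to their semisimple parts) and a finite-index neat subgroup, it suffices to treat $\gamma_2$ semisimple of infinite order.

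Fix such a $\gamma_2$ and let $T_2\subset G_2$ be a maximal $F$-torus containing it. Then
\[
\Tr\Ad\,\gamma_2 \;=\; \dim T_2 \;+\; \sum_{\alpha\in\Phi(G_2,T_2)}\alpha(\gamma_2),
\]
so $\sigma$-invariance of this trace reduces to showing that $\sigma$ permutes the multiset $\{\alpha(\gamma_2):\alpha\in\Phi(G_2,T_2)\}$. The analogous statement for any $\gamma_1\in\Gamma_1$ is automatic: since $\sigma$ fixes $\gamma_1\in G_1(K_{\Gamma_1})$, one has $\sigma(\chi(\gamma_1)) = ({}^\sigma\chi)(\gamma_1)$ for every $\chi\in X(T_1)$, and $\sigma$ acts on $X(T_1)$ through a subgroup of its absolute Galois group that permutes roots. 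This is the leverage we intend to transfer to $\Gamma_2$.

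To effect the transfer I would invoke the existence of \emph{generic} semisimple elements in Zariski-dense subgroups from \cite[\S 9]{PR-Gen}. For a generic $\gamma_2$, the Galois group of the splitting field of $T_2$ over $F$ acts on $X(T_2)$ through the full Weyl group $W(G_2,T_2)$, so the $\sigma$-orbit of a single root-value $\alpha(\gamma_2)$ already sits inside $\{\alpha'(\gamma_2):\alpha'\in W\cdot\alpha\}$. Weak commensurability furnishes $\gamma_1\in\Gamma_1$ and characters with $\chi_1(\gamma_1)=\chi_2(\gamma_2)\neq 1$; after replacing by suitable powers (allowed in the neat setting), we may take $\chi_2=\alpha$ a root and $\chi_1$ in the root lattice of $(G_1,T_1)$. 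Then $\sigma(\alpha(\gamma_2))=\sigma(\chi_1(\gamma_1))=({}^\sigma\chi_1)(\gamma_1)$ is again a product of root-values on $\gamma_1$; combining this with Theorem \ref{T:WC1} (which pins down the Killing--Cartan types of $G_1$ and $G_2$, possibly up to the $\textsf{B}_n/\textsf{C}_n$ swap) and a second application of weak commensurability, one recognizes $\sigma(\alpha(\gamma_2))$ as $\alpha'(\gamma_2)$ for some root $\alpha'\in\Phi(G_2,T_2)$. Summing over $\alpha$ yields $\sigma(\Tr\Ad\,\gamma_2)=\Tr\Ad\,\gamma_2$, and a Zariski-density argument extending from generic to arbitrary $\gamma_2$ completes the proof.

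The principal obstacle is exactly the propagation step in the third paragraph: upgrading the single eigenvalue match $\chi_1(\gamma_1)=\chi_2(\gamma_2)$ provided by the definition of weak commensurability into control over the entire multiset $\{\alpha(\gamma_2)\}_{\alpha\in\Phi(G_2,T_2)}$. This forces one to deploy the construction and properties of generic semisimple elements in Zariski-dense subgroups (so that the Galois and Weyl actions on the character lattice coincide), and to handle separately the boundary case where $G_1$ and $G_2$ are of types $\textsf{B}_n$ and $\textsf{C}_n$, in which the root systems differ and the identification of root-values across the two groups must be made via the common Weyl group rather than a naive bijection of roots.
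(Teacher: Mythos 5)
Your overall frame (pass to adjoint groups via Vinberg, then show that every automorphism $\sigma$ of $\overline{F}$ fixing $K_{\Gamma_1}$ pointwise fixes all traces $\Tr\Ad\,\gamma_2$) is reasonable, and your computation on the $\Gamma_1$-side ($\sigma(\chi_1(\gamma_1))=({}^{\sigma}\chi_1)(\gamma_1)$ for $\gamma_1\in\mathscr{G}_1(K_{\Gamma_1})$) is correct. But both reductions bracketing the main step fail as stated. The passage ``Jordan decomposition plus a neat subgroup, so it suffices to treat $\gamma_2$ semisimple of infinite order'' is not available: the semisimple part of an element of $\Gamma_2$ need not lie in $\Gamma_2$, neatness does not remove unipotent parts, and weak commensurability says nothing about semisimple elements outside $\Gamma_2$. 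Likewise the closing ``Zariski-density argument extending from generic to arbitrary $\gamma_2$'' gives nothing: $\gamma\mapsto\sigma(\Tr\Ad\,\gamma)$ is not a regular function on $G_2$ (it involves an abstract field automorphism moving $F$), so knowing $\sigma$-invariance of the trace on a Zariski-dense set of elements does not propagate to the rest of $\Gamma_2$; what is needed here is a genuine statement tying $K_{\Gamma_2}$ to the traces (or splitting fields of the tori) of \emph{generic} elements, which you neither formulate nor prove.

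The central transfer step is also wrong as written. Your $\sigma$ fixes only $K_{\Gamma_1}$, not $F$ and not $K_{\Gamma_2}$ (the latter being the goal), so it does not restrict to an element of $\mathrm{Gal}(F_{T_2}/F)$ acting on $X(T_2)$; consequently the claim that the $\sigma$-orbit of $\alpha(\gamma_2)$ lies in $\{\alpha'(\gamma_2):\alpha'\in W\cdot\alpha\}$ is unjustified --- $\sigma(\alpha(\gamma_2))$ is a root value of the $\sigma$-twisted element in the $\sigma$-twisted torus, not of $\gamma_2$ --- and if that claim did hold it would prove the theorem without ever using $\Gamma_1$ or weak commensurability, which cannot be. Similarly, weak commensurability only furnishes \emph{some} characters with $\chi_1(\gamma_1)=\chi_2(\gamma_2)\neq 1$; you cannot arrange $\chi_2$ to be a root, and the ``second application of weak commensurability'' matches $\gamma_1$ with \emph{some} semisimple element of $\Gamma_2$, not with the same $\gamma_2$ in the same torus, so there is no way, as written, to recognize $\sigma(\alpha(\gamma_2))$ as $\alpha'(\gamma_2)$. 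This is exactly the propagation you yourself flag as the ``principal obstacle,'' i.e.\ the heart of the proof is missing. Note that this survey states Theorem \ref{T:WC2} without proof, quoting \cite[Theorem 2]{PR1}; the mechanism used there is the one the survey describes in \S\ref{S:Tori}: one takes $\gamma_1\in\Gamma_1$ generic over a suitable finitely generated field containing the relevant trace fields and applies the Isogeny Theorem \ref{T:Gen4}, which converts the single eigenvalue coincidence into a $K$-isogeny $\pi\colon T_2\to T_1$ carrying powers to powers with $\pi^*(\Q\cdot\Phi(G_1,T_1))=\Q\cdot\Phi(G_2,T_2)$, together with auxiliary results on generic elements and on the fields $K_{\Gamma_i}$; your ad hoc root-by-root matching does not substitute for that machinery.
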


\vskip2mm

What would be the strongest, hence most desirable,  consequence of
weak commensurability? We recall that two subgroups $\Delta_1$ and
$\Delta_2$ of an abstract group $\Delta$ are called {\it
commensurable} if
$$[\Delta_i : \Delta_1 \cap \Delta_2] < \infty \ \ \text{for} \ \ i = 1,
2.$$ In our set-up of Zariski-dense subgroups $\Gamma_1 \subset
G_1(F)$ and $\Gamma_2 \subset G_2(F)$ in different groups, this
notion needs to be modified as follows. Let $\pi_i \colon G_i \to
\overline{G}_i$ be the isogeny onto the corresponding adjoint group.
We say that $\Gamma_1$ and $\Gamma_2$ are {\it commensurable up to
an $F$-isomorphism between $\overline{G}_1$ and $\overline{G}_2$} if
there exists an $F$-isomorphism $\sigma \colon \overline{G}_1 \to
\overline{G}_2$ such that the subgroups $\sigma(\pi_1(\Gamma_1))$
and $\pi_2(\Gamma_2)$ are commensurable in the usual sense (we note
that the commensurability of locally symmetric spaces is consistent
with this notion of commensurability for the corresponding
fundamental groups). It is easy to see that Zariski-dense subgroups
commensurable up to an isomorphism between the corresponding adjoint
groups are always weakly commensurable. So, the central question is
to determine when the converse is true. As the following example
shows, the desired conclusion may not be valid even if one of the
groups is arithmetic.

\vskip2mm

\noindent {\bf Example 2.5.} (cf.\,\cite[Remark 5.5]{PR1}) Let
$\Gamma$ be a torsion-free Zariski-dense subgroup of $G(F)$. For an
integer $m > 1$, we let $\Gamma^{(m)}$ denote the subgroup generated
by the $m$th powers of  elements of $\Gamma$. Clearly,
$\Gamma^{(m)}$ is weakly commensurable to $\Gamma$ for any $m$. On
the other hand, in many situations, $\Gamma^{(m)}$ is of infinite
index in $\Gamma$ for all sufficiently large $m$. This is, for
example, the case if $\Gamma$ is a nonabelian free group, in
particular, a finite index subgroup of ${\mathrm{SL}}_2(\Z)$. It is also the
case for finite index subgroups of ${\mathrm{SL}}_2(\mathcal{O}_d)$ where
$\mathcal{O}_d$ is the ring of integers in the imaginary quadratic
field $\Q(\sqrt{-d})$, and for cocompact lattices in  semi-simple Lie
groups of $\R$-rank 1. In all these examples, $\Gamma^{(m)}$ (for $m \gg 0$) is
weakly commensurable, but not commensurable to $\Gamma$.

\vskip2mm

This example indicates that an ``ideal" result asserting that the
weak commensurability of $\Gamma_1$ and $\Gamma_2$ implies their
commensurability, generally speaking, is possible only if {\it both}
subgroups $\Gamma_1$ and $\Gamma_2$ are ``large" (e.g., arithmetic
subgroups or at least lattices). Such results  were indeed obtained
in \cite{PR1} (with the help of the results from \cite{Gar} and
\cite{PR-invol}) for $S$-arithmetic groups, and we will review these
results in the next section. For general Zariski-dense subgroups,
one should focus on characterizing the ``minimal" algebraic group
containing the subgroup (in fact, this is precisely the approach
that has led to the definitive results in the arithmetic situation).
More precisely, let $\Gamma \subset G(F)$ be a Zariski-dense
subgroup, and let $K = K_{\Gamma}$ be the corresponding trace field.
Assuming that $G \subset \mathrm{GL}_N$ is adjoint, we know by
Vinberg's theorem that one can pick a basis of the $N$-dimensional
space such that in this basis $\Gamma \subset \mathrm{GL}_N(K)$.
Then the Zariski-closure $\mathscr{G}$ of $\Gamma$ is an algebraic
$K$-group that becomes isomorphic to $G$ over $F$; in other
words, $\mathscr{G}$ is an $F/K$-form of $G$ such that $\Gamma
\subset \mathscr{G}(K)$. Moreover, if $\mathscr{G}'$ is another
$F/K$-form with this property, there exists an $F$-isomorphism
$\mathscr{G} \to \mathscr{G}'$ that induces the ``identity'' map on
$\Gamma$, and then the Zariski-density of $\Gamma$ implies that this
isomorphism is defined over $K$. Thus, the $F/K$-form $\mathscr{G}$ is
uniquely defined. We would now like to formulate the following
finiteness conjecture (which can be compared with the result that
there are only finitely many finite groups with a given character
table).

\vskip2mm

\noindent {\bf Conjecture 2.6.} {\it Let $G_1$ and $G_2$ be
absolutely simple algebraic $F$-groups of adjoint type,
let $\Gamma_1$ be a finitely
generated Zariski-dense subgroup of $G_1(F)$ with trace field  $K =
K_{\Gamma_1}$.  Then there exists a \emph{finite} collection
$\mathscr{G}^{(1)}_2, \ldots , \mathscr{G}^{(r)}_2$ of $F/K$-forms
of $G_2$
such that if $\Gamma_2$ is a finitely generated
Zariski-dense subgroup of $G_2(F)$ that is weakly commensurable to $\Gamma_1$,
then it is conjugate to a subgroup of one of the
$\mathscr{G}^{(i)}_2(K)$'s \ $(\subset G_2(F))$.}

\vskip2mm

In \S \ref{S:Finite}, we will present a previously unpublished
result that  implies the truth of this conjecture when $K$ is a
number field. We recall that if $G$ is a simple algebraic $\R$-group
different from $\mathrm{PGL}_2$, then for any lattice $\Gamma$ of
$G(\R)$, the trace field $K_{\Gamma}$ is a number field, so to prove
this conjecture for all lattices in simple groups, it remains to
consider the group $G = \mathrm{PGL}_2$. This has not been done yet, but we
will discuss some related results in this direction in \S
\ref{S:Tori} (specifically, see Theorems \ref{T:Finite1} and
\ref{T:Finite2}). Of course, one is interested not only in
qualitative results in the spirit of Conjecture 2.6, but also in
more quantitative ones asserting that in certain situations $r = 1$,
i.e. a $K$-form is uniquely determined by the weak commensurability
class of a finitely generated Zariski-dense subgroup with trace
field $K$. We refer the reader to Theorem \ref{T:WC4} regarding
results of this kind for arithmetic groups; more general cases
have not been considered so far - see, however, Theorem \ref{T:StabThm}
and Corollary \ref{C:1}.

\section{Results on weak commensurability of $S$-arithmetic
groups}\label{S:Arithm}

\noindent{\bf  3.1. The definition of arithmeticity.} Our results on
weakly commensurable $S$-arithmetic subgroups in absolutely almost
simple groups rely on a specific form of their description, so we
begin with a review of the relevant definitions. Let $G$ be an
algebraic group defined over a number field $K$, and let $S$ be a
finite subset of the set $V^K$ of all places of $K$ containing the
set $V_{\infty}^K$ of archimedean places. Fix a $K$-embedding $G
\subset \mathrm{GL}_N$, and consider the group of $S$-integral
points
$$
G(\cO_K(S)) := G \cap {\mathrm{GL}}_N(\cO_K(S)).
$$
Then, for any field extension $F/K$, the subgroups of $G(F)$ that
are commensurable (in the usual sense)
with $G(\cO_K(S))$ are called $S$-{\it arithmetic}, and in the case
where $S = V_{\infty}^K$ simply {\it arithmetic} (note that
$\cO_K(V_{\infty}^K) = \cO_K$, the ring of algebraic integers in
$K$). It is well-known  that the resulting class of $S$-arithmetic
subgroups does not depend on the choice of $K$-embedding $G \subset
\mathrm{GL}_N$ (cf.\,\cite{PlR}). The question, however, is what we
should mean by an arithmetic subgroup of $G(F)$ when $G$ is an
algebraic group defined over a field $F$ of characteristic zero that
is not equipped with a structure of $K$-group over some number field
$K \subset F$. For example, what is an arithmetic subgroup of
$G(\R)$ where $G = \mathrm{SO}_3(f)$ and $f = x^2 + e y^2 - \pi
z^2$? For absolutely almost simple groups the ``right" concept that
we will formalize below is given in terms of forms of $G$ over
the subfields $K \subset F$ that are number fields. In our example,
we can consider the following rational quadratic forms that are
equivalent to $f$ over $\R$:
$$
f_1 = x^2 + y^2 - 3z^2 \ \ \ \text{and} \ \ \ f_2 = x^2 + 2y^2 -
7z^2,
$$
and set $G_i = \mathrm{SO}_3(f_i)$. Then for each $i = 1, 2$, we
have an $\R$-isomorphism $G_i \simeq G$, so the natural arithmetic
subgroup  $G_i(\Z) \subset G_i(\R)$ can be thought of as an
``arithmetic" subgroup of $G(\R)$. Furthermore, one can consider
quadratic forms over other number subfields $K \subset \R$ that
again become equivalent to $f$ over $\R$; for example,
$$
K = \Q(\sqrt{2}) \ \ \ \text{and} \ \ \ f_3 = x^2 + y^2 - \sqrt{2}
z^2.
$$
Then for $G_3 = \mathrm{SO}_3(f_3)$, there is an $\R$-isomorphism
$G_3 \simeq G$ which allows us to view the natural arithmetic
subgroup $G_3(\cO_K) \subset G_3(\R)$, where $\cO_K = \Z[\sqrt{2}]$,
as an ``arithmetic" subgroup of $G(\R)$. One can easily generalize
such constructions from arithmetic to $S$-arithmetic groups by
replacing the rings of integers with the rings of $S$-integers. So,
generally speaking, by an $S$-arithmetic subgroup of $G(\R)$ we mean
a subgroup which is commensurable to one of the subgroups obtained
through this construction for some choice of a number subfield $K
\subset \R$, a finite set $S$ of places of $K$ containing all the
archimedean ones, and a quadratic form $\tilde{f}$  over $K$ that
becomes equivalent to $f$ over $\R$. The technical definition is as
follows.

\vskip2mm

Let $G$ be a connected  absolutely almost simple  algebraic group
defined over a field $F$ of characteristic zero, $\overline{G}$ be
its adjoint group, and $\pi \colon G \to \overline{G}$ be the
natural isogeny. Suppose we are given the following data:

\vskip2mm

$\bullet$ \parbox[t]{15cm}{a {\it number field} $K$ with a {\it
fixed} embedding $K \hookrightarrow F$;}

\vskip1mm

$\bullet$ \parbox[t]{15cm}{an $F/K$-form $\mathscr{G}$ of
$\overline{G}$, which is an algebraic $K$-group such that there
exists an $F$-isomorphism ${}_F \mathscr{G} \simeq \overline{G}$,
where ${}_F\mathscr{G}$ is the group obtained from $\mathscr{G}$ by
extension of scalars from $K$ to $F$;}

\vskip1mm

$\bullet$ \parbox[t]{15cm}{a finite set $S$ of places of $K$
containing $V_{\infty}^K$ but not containing any nonarchimedean
places $v$ such that $\mathscr{G}$ is
$K_v$-anisotropic\footnotemark.}

\footnotetext{We note that if $\mathscr{G}$ is $K_v$-anisotropic
then $\mathscr{G}(\cO_K(S))$ and $\mathscr{G}(\cO_K(S \cup \{v \}))$
are commensurable, and therefore the classes of $S$- and $(S \cup \{
v \})$-arithmetic subgroups coincide. Thus, this assumption on $S$
is necessary if we want to recover it from a given $S$-arithmetic
subgroup.}

\vskip2mm

\noindent We then have an embedding $\iota \colon \mathscr{G}(K)
\hookrightarrow \overline{G}(F)$ which is well-defined up to an
$F$-automorphism of $\overline{G}$ (note that we do {\it not} fix an
isomorphism ${}_F\mathscr{G} \simeq \overline{G}$). A subgroup
$\Gamma$ of $G(F)$ such that $\pi(\Gamma)$ is commensurable with
$\sigma(\iota (\mathscr{G}(\cO_{K}(S))))$, for some $F$-automorphism
$\sigma$ of $\overline{G}$, will be called a $(\mathscr{G} , K ,
S)$-{\it arithmetic subgroup}\footnote{This notion of arithmetic
subgroups coincides with that in Margulis' book \cite{Mar} for
absolutely simple adjoint groups.}, or an $S$-arithmetic subgroup
described in terms of the triple $(\mathscr{G}, K,S)$. As usual,
$(\mathscr{G} , K, V^{K}_{\infty})$-arithmetic subgroups will simply
be called $(\mathscr{G} , K)$-arithmetic.
%
%
The key observation is that the description of $S$-arithmetic
subgroups in terms of triples  is very convenient for determining
when two such subgroups $\Gamma_1 \subset G_1(F)$ and $\Gamma_2
\subset G_2(F)$ are commensurable up to an isomorphism between
$\overline{G}_1$ and $\overline{G}_2$.

\addtocounter{thm}{1}
\begin{prop}\label{P:WC1}
{\rm (\cite[Proposition 2.5]{PR1})} Let $G_1$ and $G_2$ be connected
absolutely almost simple algebraic groups defined over a field $F$
of characteristic zero, and for $i = 1, 2$, let $\Gamma_i$ be a
Zariski-dense $(\mathscr{G}_i, K_i, S_i)$-arithmetic subgroup of
$G_i(F)$. Then $\Gamma_1$ and $\Gamma_2$ are commensurable up to an
$F$-isomorphism between $\overline{G}_1$ and $\overline{G}_2$ if and
only if $K_1 = K_2 =: K$, $S_1 = S_2$, and $\mathscr{G}_1$ and
$\mathscr{G}_2$ are $K$-isomorphic.
\end{prop}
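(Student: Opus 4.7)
The ``if'' direction is routine: a $K$-isomorphism $\varphi\colon \mathscr{G}_1 \to \mathscr{G}_2$ (with $K_1 = K_2 = K$, $S_1 = S_2 = S$) extends by base change to an $F$-isomorphism $\varphi_F\colon {}_F\mathscr{G}_1 \to {}_F\mathscr{G}_2$, hence to an $F$-isomorphism $\sigma\colon \overline{G}_1 \to \overline{G}_2$ through the fixed identifications ${}_F\mathscr{G}_i \simeq \overline{G}_i$. Since $\varphi$ sends $\mathscr{G}_1(\cO_K(S))$ to $\mathscr{G}_2(\cO_K(S))$, we obtain the desired commensurability.

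For the converse, assume $\sigma\colon \overline{G}_1 \to \overline{G}_2$ is an $F$-isomorphism such that $\sigma(\pi_1(\Gamma_1))$ and $\pi_2(\Gamma_2)$ are commensurable. After replacing the $\Gamma_i$ by finite-index subgroups, we may assume $\sigma(\pi_1(\Gamma_1)) = \pi_2(\Gamma_2) =: \Delta$, sitting inside both $\sigma(\iota_1(\mathscr{G}_1(K_1))) \subset \overline{G}_2(F)$ and $\iota_2(\mathscr{G}_2(K_2)) \subset \overline{G}_2(F)$. The first step is to recover the number field. Since $\Delta$ is Zariski-dense in $\overline{G}_2$, Theorem \ref{T:WC2} (or more directly its underlying Vinberg theorem) applies in each realization: the trace field $K_\Delta = K_{\Gamma_i}$ generated by $\tr \Ad \delta$ ($\delta \in \Delta$) is simultaneously the minimal field of definition for $\Delta$ inside $\overline{G}_2$, and this equals $K_i$ because the standard embedding $\mathscr{G}_i(\cO_{K_i}(S_i)) \hookrightarrow \overline{G}_i(F)$ already realizes $\Delta$ over $K_i$, while $K_i$ itself is generated by these traces (as $\mathscr{G}_i$ is absolutely almost simple adjoint over $K_i$ and the $S$-arithmetic subgroup is Zariski-dense over $K_i$). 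Hence $K_1 = K_2 =: K$.

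Once $K_1 = K_2 = K$, the uniqueness part of Vinberg's theorem gives more: the $F/K$-form of $\overline{G}_2$ containing $\Delta$ as a Zariski-dense subgroup of its $K$-points is unique up to a $K$-isomorphism inducing the identity on $\Delta$. Applied to the two realizations above, this yields a $K$-isomorphism $\sigma_K\colon {}_{\mathscr{G}_1}^\sigma \to \mathscr{G}_2$ (where ${}_{\mathscr{G}_1}^\sigma$ is the transport of $\mathscr{G}_1$ via $\sigma$), and hence a $K$-isomorphism $\mathscr{G}_1 \simeq \mathscr{G}_2$.

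It remains to show $S_1 = S_2$. Transport via $\sigma_K$ identifies $\mathscr{G}_1(\cO_K(S_1))$ with a subgroup of $\mathscr{G}_2(K)$ that is commensurable with $\mathscr{G}_2(\cO_K(S_2))$. I would now exploit the local characterization of $S$-integrality: for a nonarchimedean place $v$ of $K$ at which $\mathscr{G}_2$ is $K_v$-isotropic, the image of $\mathscr{G}_2(\cO_K(S))$ in $\mathscr{G}_2(K_v)$ is discrete (indeed, contained in a compact subgroup) if and only if $v \notin S$, while if $v \in S$ the image is unbounded. Since the two commensurable subgroups have the same closure behavior at every such $v$, and $S_i$ by the standing assumption contains no nonarchimedean $v$ where $\mathscr{G}_i$ is anisotropic (and archimedean places belong to every $S_i$ by definition), we conclude $S_1 = S_2$.

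The main obstacle is the middle step: one must know that for an $S$-arithmetic $\Gamma \subset \mathscr{G}(\cO_K(S))$ with $\mathscr{G}$ absolutely simple adjoint, the trace field $K_\Gamma$ in the adjoint representation is exactly $K$ and the associated minimal $K$-form of the ambient adjoint group (in the sense of Vinberg) is exactly $\mathscr{G}$. This packages the arithmetic content — strong approximation / Zariski-density of $\mathscr{G}(\cO_K(S))$ in $\mathscr{G}$ — into the purely algebraic uniqueness statement from \S 2.
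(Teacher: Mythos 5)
Your overall route matches the proof in \cite[Proposition~2.5]{PR1}: the ``if'' direction is formal; for ``only if'' one reduces to a common Zariski-dense $\Delta$ realized in two ways, recovers $K$ as the trace field via Vinberg's theorem, recovers $\mathscr{G}$ (up to $K$-isomorphism) from the uniqueness of the $F/K$-form of the adjoint group containing $\Delta$ in its $K$-points, and recovers $S$ from the bounded/unbounded dichotomy of $\Gamma$ at finite places (using that $S$ contains no anisotropic nonarchimedean places and all archimedean ones). The ``if'' direction and the $S$-recovery step are correct as you state them.

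The one genuine gap is exactly the one you flag at the end, and it is not a triviality that can be left as an ``obstacle'': you need the inclusion $K_i\subseteq K_\Delta$, i.e.\ that the trace field of a Zariski-dense $(\mathscr{G}_i,K_i,S_i)$-arithmetic group is all of $K_i$, not just contained in it. This is a separate arithmetic lemma in \cite{PR1} (it precedes Proposition~2.5 there), and the proof is not a formal consequence of Vinberg's theorem. One way to see it: if $K'=K_\Delta\subsetneq K_i$, then by Vinberg some $\mathrm{GL}_n(K_i)$-conjugate of $\mathrm{Ad}\,\Delta$ lies in $\mathrm{GL}_n(K')$, so the Zariski closure of $\Delta$ over $\Q$ (inside $\mathrm{Res}_{K_i/\Q}\mathrm{GL}_n$) is $\Q$-isomorphic to $\mathrm{Res}_{K'/\Q}\mathscr{H}$ for an absolutely simple $K'$-group $\mathscr{H}$, of $\Q$-dimension $[K':\Q]\dim\overline{G}$. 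But $\Delta$, being commensurable with $\mathscr{G}_i(\cO_{K_i}(S_i))$, is Zariski-dense over $\Q$ in $\mathrm{Res}_{K_i/\Q}\mathscr{G}_i$ (Borel density for $S$-arithmetic lattices), which has $\Q$-dimension $[K_i:\Q]\dim\overline{G}$. Comparing dimensions forces $[K':\Q]=[K_i:\Q]$, hence $K'=K_i$. Without some such argument your step ``$K_1=K_2$'' does not follow, so the proof should supply this rather than merely record it as the place where the arithmetic content enters.
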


It follows from the above proposition that the arithmetic subgroups
$\Gamma_1$, $\Gamma_2$, and $\Gamma_3$ constructed above, of
$G(\R)$, where $G = \mathrm{SO}_3(f)$, are pairwise
noncommensurable: indeed, $\Gamma_3$, being defined over
$\Q(\sqrt{2})$, cannot possibly be commensurable to $\Gamma_1$ or
$\Gamma_2$ as these two groups are defined over $\Q$; at the same
time, the non-commensurability of $\Gamma_1$ and $\Gamma_2$ is a
consequence of the fact that $\mathrm{SO}_3(f_1)$ and
$\mathrm{SO}_3(f_2)$ are not $\Q$-isomorphic since the quadratic
form $f_1$ is anisotropic over $\Q_3$, and $f_2$ is not.

\vskip5mm

In view of Proposition \ref{P:WC1}, the central question in the
analysis of weak commensurability of $S$-arithmetic subgroups is the
following: {\it Suppose we are given two Zariski-dense
$S$-arithmetic subgroups that are described in terms of triples.
Which components of these triples coincide given the fact that the
subgroups are weakly commensurable?} As the following result
demonstrates, two of these components {\it must} coincide.
\begin{thm}\label{T:WC3}
{\rm (\cite[Theorem 3]{PR1})} Let $G_1$ and $G_2$ be two connected
absolutely almost simple algebraic groups defined over a field $F$
of characteristic zero. If Zariski-dense $(\mathscr{G}_i , K_i ,
S_i)$-arithmetic subgroups $\Gamma_i$ of $G_i(F)$ $(i = 1,\,2)$ are
weakly commensurable, then $K_1 = K_2$ and $S_1 = S_2.$
\end{thm}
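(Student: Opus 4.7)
The plan is to establish $K_1 = K_2$ first and then $S_1 = S_2$, with the main work in the second step.

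\textbf{Equality of trace fields.} I would first show that, after the standard (minimality) normalization of the triples, the trace field $K_{\Gamma_i}$ coincides with $K_i$. On the one hand, passing to a finite-index subgroup (which preserves weak commensurability by Lemma~2.3 of \cite{PR1}), we may assume $\pi_i(\Gamma_i) \subset \sigma_i(\iota_i(\mathscr{G}_i(K_i)))$, so every $\Tr \Ad \gamma$ with $\gamma \in \Gamma_i$ lies in $K_i$, giving $K_{\Gamma_i} \subseteq K_i$. On the other hand, by Vinberg's theorem $K_{\Gamma_i}$ is the minimal field of definition of $\Ad \Gamma_i$; by Zariski-density this coincides with the minimal field of definition of $\mathscr{G}_i$, namely $K_i$. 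Applying Theorem~\ref{T:WC2} to the weakly commensurable pair then yields $K_1 = K_{\Gamma_1} = K_{\Gamma_2} = K_2$; denote the common value by $K$.

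\textbf{Equality of $S$-sets.} By symmetry it suffices to prove $S_1 \subseteq S_2$. Suppose instead that there is $v \in S_1 \setminus S_2$. Since both sets contain $V_\infty^K$, the place $v$ is non-archimedean, and the hypothesis on the triples forces $\mathscr{G}_1$ to be $K_v$-isotropic. Fix an extension $w$ of the $v$-adic valuation to $\overline{K}$; the contradiction will come from comparing $w$-valuations of eigenvalues. Since $v \notin S_2$, after replacing $\Gamma_2$ by a commensurable subgroup we have $\Gamma_2 \subset \mathscr{G}_2(\cO_K(S_2)) \subset \mathscr{G}_2(\cO_{K,v})$, so every semi-simple $\gamma_2 \in \Gamma_2$ and its inverse are $v$-integral. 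Consequently all eigenvalues of $\gamma_2$ are $v$-adic units, and $w$ vanishes identically on the subgroup $E(\gamma_2) \subset \overline{K}^\times$ that they generate. The crux is therefore to produce a single semi-simple $\gamma_1 \in \Gamma_1$ of infinite order whose eigenvalue subgroup $E(\gamma_1)$ contains no nontrivial element of $w$-valuation zero: given such $\gamma_1$, weak commensurability furnishes a semi-simple $\gamma_2 \in \Gamma_2$ of infinite order with $E(\gamma_1) \cap E(\gamma_2) \neq \{1\}$, and any nontrivial $u$ in this intersection must simultaneously satisfy $w(u) = 0$ (from $E(\gamma_2)$) and $w(u) \neq 0$ (from $E(\gamma_1)$), a contradiction.

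To construct $\gamma_1$, I would invoke Harder's approximation theorem for maximal tori to produce a maximal $K$-torus $T \subset \mathscr{G}_1$ containing a nontrivial $K_v$-split subtorus, which is possible because $\mathscr{G}_1$ is $K_v$-isotropic. A Dirichlet-type theorem for $S$-units in tori ensures that $T(\cO_K(S_1))$ has positive $\Z$-rank with a contribution from $v$, so it contains elements of infinite order with nontrivial projection to the $K_v$-split part. A Zariski-generic choice of such an element (avoiding the finitely many hyperplanes cut out by the roots of $T$) then guarantees that every nontrivial character of $T$ takes a value of non-zero $w$-valuation on $\gamma_1$, whence $w$ is injective on $E(\gamma_1)$ modulo the trivial eigenvalues -- exactly the property needed.

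\textbf{Main obstacle.} The delicate step is the construction in the last paragraph. One must exhibit a single element of the $S$-arithmetic group whose eigenvalue subgroup not only has unbounded $v$-adic valuation but embeds faithfully into the value group under $w$. This demands a careful interplay between torus approximation at $v$ (to produce the right $K$-rational maximal torus), a Dirichlet-type theorem for tori (to guarantee the $S$-unit group has sufficient $v$-rank), and a genericity argument preventing accidental cancellations that would turn some nontrivial combination of eigenvalues of $\gamma_1$ into a $v$-adic unit.
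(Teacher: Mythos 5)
Your first step, establishing $K_1 = K_2$ via Vinberg's theorem and Theorem~\ref{T:WC2}, is correct and is essentially the paper's route.

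The step $S_1 = S_2$ has a genuine gap. Your scheme hinges on producing a semi-simple $\gamma_1 \in \Gamma_1$ for which the valuation $w$ (a single fixed extension of $v$ to $\overline{K}$) is injective on $E(\gamma_1)$ up to torsion, so that a nontrivial $u \in E(\gamma_1) \cap E(\gamma_2)$ would have $w(u) \neq 0$. This is impossible whenever $G_1$ has absolute rank $r \geqslant 2$: the subgroup $E(\gamma_1)$ is the image of $X^*(T_1) \cong \Z^r$ under $\chi \mapsto \chi(\gamma_1)$, hence a free abelian group of rank up to $r$, while $w$ takes values in $\Q$; a homomorphism from a free abelian group of rank $\geqslant 2$ into $\Q$ always has a nontrivial kernel. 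In particular your ``Zariski-generic choice avoiding the finitely many hyperplanes cut out by the roots'' cannot achieve what you claim --- regularity of $\gamma_1$ is an avoidance of finitely many conditions, but $w(\chi(\gamma_1)) \neq 0$ for \emph{all} nontrivial $\chi \in X^*(T_1)$ is infinitely many conditions and, by the rank argument, is unattainable for any choice of $\gamma_1$ when $r \geqslant 2$. Thus the proposed proof only covers absolute rank one.

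The surrounding framework is salvageable but needs a genuinely different input. Since $E(\gamma_2) \subset \overline{K}^\times$ consists of elements of valuation zero at \emph{every} place over $v$ (not just at one), it is enough to arrange that no nontrivial element of $E(\gamma_1)$ is a unit at all places of the splitting field $L_1$ of $T_1$ over $v$ simultaneously; i.e., that the composite $X^*(T_1) \to \bigoplus_{w \mid v} \Z$, $\chi \mapsto (w(\chi(\gamma_1)))_w$, has trivial kernel after tensoring with $\Q$. This is where genericity of the torus is essential: $\ker$ of this map is stable under the image of $\Ga(L_1/K)$ acting on $X^*(T_1)\otimes\Q$, and since that image contains the Weyl group (which acts $\Q$-irreducibly for absolutely simple $G_1$), the kernel is either $0$ or everything; it is not everything once a single $w_0(\chi(\gamma_1))$ is nonzero, which is what $K_v$-isotropy of $\mathscr{G}_1$ and a Dirichlet-type construction at $v$ provide. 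So you need to replace a single place $w$ by the full Galois orbit of places over $v$ and invoke Weyl-irreducibility; without this, the argument fails in rank $\geqslant 2$.
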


\vskip1mm

In general, the forms $\mathscr{G}_1$ and $\mathscr{G}_2$ do not
have to be $K$-isomorphic (see \cite{PR1}, Examples 6.5 and 6.6 as
well as the general construction in \S 9). In the next theorem we
list the cases where it can nevertheless be asserted that
$\mathscr{G}_1$ and $\mathscr{G}_2$ are necessarily $K$-isomorphic,
and then give a general finiteness result for the number of
$K$-isomorphism classes.
\begin{thm}\label{T:WC4}
{\rm (\cite[Theorem 4]{PR1})} Let $G_1$ and $G_2$ be two connected
absolutely almost simple algebraic groups defined over a field $F$
of characteristic zero, of the \emph{same} type different from
$\textsf{A}_{n}$, $\textsf{D}_{2n+1}$, with $n > 1$, or
$\textsf{E}_6$. If for $i = 1,\,2 $, $G_i(F)$  contain Zariski-dense
$(\mathscr{G}_i , K , S)$-arithmetic subgroup $\Gamma_i$ which are
weakly commensurable to each other, then $\mathscr{G}_1 \simeq
\mathscr{G}_2$ over $K,$ and hence $\Gamma_1$ and $\Gamma_2$ are
commensurable up to an $F$-isomorphism between $\overline{G}_1$ and
$\overline{G}_2.$
\end{thm}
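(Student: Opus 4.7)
The plan is to combine Theorem \ref{T:WC3} and Proposition \ref{P:WC1} with a detailed study of \emph{generic} semi-simple elements. Since Theorem \ref{T:WC3} already forces $K_1 = K_2 =: K$ and $S_1 = S_2 = S$, Proposition \ref{P:WC1} reduces the theorem to establishing a $K$-isomorphism $\mathscr{G}_1 \simeq \mathscr{G}_2$. The strategy is to show first that $\mathscr{G}_1$ and $\mathscr{G}_2$ share essentially the same generic maximal $K$-tori (this part works in \emph{every} type), and then to invoke a type-by-type rigidity principle — available precisely outside the three excluded families $\textsf{A}_n$, $\textsf{D}_{2n+1}$, $\textsf{E}_6$ — that recovers the $K$-form of an absolutely almost simple group from its collection of maximal $K$-tori.

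First I would transport the $\Gamma_i$ through the arithmeticity embeddings $\iota_i \colon \mathscr{G}_i(K) \hookrightarrow \overline{G}_i(F)$ so as to view them (up to commensurability) as Zariski-dense $S$-arithmetic subgroups of $\mathscr{G}_i(K)$; after passing to a finite-index neat subgroup, raising to powers preserves weak commensurability. The key ingredient now is the existence in $\Gamma_1$ of an abundance of \emph{generic} semi-simple elements, i.e.\ regular semi-simple $\gamma_1$ contained in a unique maximal $K$-torus $T_1$ whose splitting field is large enough to realize the Galois image in $\mathrm{Aut}(X(T_1))$ containing the Weyl group $W(\mathscr{G}_1, T_1)$ (cf.\ \cite[\S 9]{PR-Gen}); by a density/strong-approximation argument, such $\gamma_1$ form a Zariski-dense subset of $\Gamma_1$.

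Given such a generic $\gamma_1$, weak commensurability produces a semi-simple $\gamma_2 \in \Gamma_2$ of infinite order lying in some maximal $K$-torus $T_2$ of $\mathscr{G}_2$, and characters $\chi_i \in X(T_i)$ with
\[
\chi_1(\gamma_1) \,=\, \chi_2(\gamma_2) \,\neq\, 1.
\]
The central algebraic step, essentially \cite[Theorem 4.2]{PR1}, is that the genericity of $T_1$ promotes this single multiplicative relation to a $\Ga(\overline{K}/K)$-equivariant isomorphism $X(T_1) \simeq X(T_2)$, hence to a $K$-isomorphism $T_1 \simeq T_2$. Running the argument symmetrically, every generic maximal $K$-torus of $\mathscr{G}_1$ appears as a maximal $K$-torus of $\mathscr{G}_2$, and conversely.

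The main obstacle — and the sole reason for excluding the three families $\textsf{A}_n$, $\textsf{D}_{2n+1}$, $\textsf{E}_6$ — is the passage from ``sharing generic maximal $K$-tori'' to ``$K$-isomorphic.'' For the admitted types the argument is type-by-type. For $\textsf{B}_n$, $\textsf{C}_n$, $\textsf{D}_{2n}$ the $K$-forms are classified by hermitian, quadratic, or involutory data whose local invariants at each $v \in V^K$ can be read off the $K_v$-decomposition of a single generic torus and the centralizers of its elements, and the Hasse principle packages them into a $K$-isomorphism; for $\textsf{G}_2$, $\textsf{F}_4$, $\textsf{E}_7$, $\textsf{E}_8$ the Hasse principle of Harder--Chernousov applies, and the relevant cohomological invariants are again visible on a generic maximal $K$-torus. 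By contrast, in the excluded types the class in $H^1(K, \overline{G})$ is not determined by the isomorphism types of maximal $K$-tori — most transparently for inner type $\textsf{A}_n$, where this is the well-known phenomenon of central simple $K$-algebras having the same maximal subfields without being isomorphic (cf.\ \S \ref{S:Tori}). Once $\mathscr{G}_1 \simeq_K \mathscr{G}_2$ is established, Proposition \ref{P:WC1} yields the commensurability of $\Gamma_1$ and $\Gamma_2$ up to an $F$-isomorphism between $\overline{G}_1$ and $\overline{G}_2$.
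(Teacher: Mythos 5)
You should first note that this survey does not prove Theorem \ref{T:WC4} at all: it is quoted from \cite[Theorem 4]{PR1}, so the only comparison possible is with the strategy of that paper as summarized here (the Isogeny Theorem \ref{T:Gen4}, the remark that the excluded types are exactly those with $-1$ not in the Weyl group, and the remark immediately after the statement about type $\textsf{D}_{2n}$). Your skeleton — reduce via Theorem \ref{T:WC3} and Proposition \ref{P:WC1} to proving $\mathscr{G}_1\simeq_K\mathscr{G}_2$, use generic elements and Theorem \ref{T:Gen4} to relate maximal $K$-tori, then recover the form — is indeed the right architecture. But the decisive last step is asserted rather than proved, and as described it would not work. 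The claim that the local invariants at each $v\in V^K$ ``can be read off the $K_v$-decomposition of a single generic torus and the centralizers of its elements'' is not tenable: one maximal $K$-torus does not determine the $K_v$-isomorphism class of the form at every place, and this is exactly where the honest exceptions come from (a torus embeds into a form if and only if it embeds into its ``opposite,'' so torus data only pin the local form down up to the opposition involution). The actual argument in \cite[\S\S 6--7]{PR1} needs two ingredients you omit: the production, inside the arithmetic group $\Gamma_1$ itself, of generic elements whose tori have \emph{prescribed local behavior} at arbitrary finite sets of places (this uses arithmeticity, not just Zariski-density), and a uniform Galois-cohomological twisting argument in which $\mathscr{G}_2$ is written as the twist of $\mathscr{G}_1$ by a class coming from $H^1(K,\overline{T})$ for a shared generic torus and one analyzes $H^1(K,\overline{T})\to H^1(K,\overline{\mathscr{G}}_1)$ together with local-global properties of generic tori; the hypothesis $-1\in W(\Phi)$ (equivalently, the exclusion of $\textsf{A}_n$, $\textsf{D}_{2n+1}$, $\textsf{E}_6$) enters there, not through a case-by-case reading of hermitian or involutory classification data. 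Also, Theorem \ref{T:Gen4} gives a $K$-isogeny, and a $K$-isomorphism only after scaling and passing to the corresponding tori of the simply connected or adjoint groups (with the caveats for $\textsf{B}_2$, $\textsf{F}_4$, $\textsf{G}_2$), and not every generic maximal $K$-torus of $\mathscr{G}_1$ meets $\Gamma_1$, so ``every generic torus of $\mathscr{G}_1$ appears in $\mathscr{G}_2$'' needs the prescribed-local-behavior refinement.

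The second concrete gap is that you dispose of types $\textsf{B}_n$, $\textsf{C}_n$, $\textsf{D}_{2n}$ in one breath, whereas the survey states explicitly, right after the theorem, that $\textsf{D}_{2n}$ ``required special consideration'': the case $n>2$ was only settled in \cite{PR-invol} via a local-global principle for embeddings of fields with involutive automorphisms into simple algebras with involution, and $\textsf{D}_4$ (in fact all $n$) by Garibaldi \cite{Gar} using Tits algebras. So the one family your type-by-type paragraph treats as routine is precisely the one that could not be handled by the general method of \cite{PR1} and required separate papers. A correct write-up must either reproduce that extra input or restrict the claim accordingly.
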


In this theorem, type $\textsf{D}_{2n}$ $(n \geqslant 2)$ required
special consideration. The case $n > 2$ was settled in
\cite{PR-invol} using the techniques of \cite{PR1} in conjunction
with  results on embeddings of fields with involutive automorphisms
into simple algebras with involution. The remaining case of type
$\textsf{D}_4$ was treated by Skip Garibaldi \cite{Gar}, whose
argument actually applies to all $n$ and explains the result from
the perspective of Galois cohomology, providing thereby a
cohomological insight (based on the notion of Tits algebras) into
the difference between the types $\textsf{D}_{2n}$ and
$\textsf{D}_{2n + 1}$. We note that the types excluded in the
theorem are precisely the types for which the automorphism $\alpha
\mapsto -\alpha$ of the corresponding root system is {\it not} in
the Weyl group. More importantly, all these types are honest
exceptions to the theorem -- a general Galois-cohomological
construction of weakly commensurable, but not commensurable,
Zariski-dense $S$-arithmetic subgroups for all of these types is
given in \cite[\S 9]{PR1}.

\vskip2mm

\begin{thm}\label{T:WC5}
{\rm (\cite[Theorem 5]{PR1})} Let $G_1$ and $G_2$ be two connected
absolutely almost simple groups defined over a field $F$ of
characteristic zero. Let $\Gamma_1$ be a Zariski-dense
$(\mathscr{G}_1 , K , S)$-arithmetic subgroup of $G_1(F).$ Then the
set of $K$-isomorphism classes of $K$-forms $\mathscr{G}_2$ of
$\overline{G}_2$ such that $G_2(F)$ contains a Zariski-dense
$(\mathscr{G}_2 , K , S)$-arithmetic subgroup weakly commensurable
to $\Gamma_1$ is finite.

In other words, the set of all Zariski-dense $(K , S)$-arithmetic
subgroups of  $G_2(F)$ which are weakly commensurable to a given
Zariski-dense $(K , S)$-arithmetic subgroup is a union of finitely
many commensurability classes.
\end{thm}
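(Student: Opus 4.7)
The plan is to combine the preceding theorems of \S\ref{S:Arithm} with a local--global analysis that reduces the question to a classical Brauer-group finiteness statement. First I would use the earlier theorems to narrow the possibilities for $\mathscr{G}_2$. By Theorem~\ref{T:WC1}, the Killing--Cartan type of $G_2$ agrees with that of $G_1$ up to a possible $\textsf{B}_n/\textsf{C}_n$ swap, so only finitely many quasi-split inner $K$-forms of $\overline{G}_2$ can arise. By Theorem~\ref{T:WC3}, any triple describing a Zariski-dense $S$-arithmetic subgroup $\Gamma_2 \subset G_2(F)$ weakly commensurable to $\Gamma_1$ must have number field $K$ and place set $S$. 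Finally, by Theorem~\ref{T:WC4}, outside the exceptional types $\textsf{A}_n$, $\textsf{D}_{2n+1}$ (with $n > 1$), and $\textsf{E}_6$, the $K$-isomorphism class of $\mathscr{G}_2$ is already uniquely determined. It therefore suffices to prove finiteness within each of these three exceptional types.

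For the exceptional types I would invoke generic semi-simple elements (cf.\,\S\ref{S:Tori} and \cite[\S 9]{PR-Gen}). Pick a generic $\gamma_1$ in the image of $\Gamma_1$ in $\mathscr{G}_1(K)$, and let $T_1 \subset \mathscr{G}_1$ be the maximal $K$-torus containing it. Genericity forces the splitting field of $T_1$ to have large Galois group and pins down the Galois module structure of $T_1$. Weak commensurability produces a semi-simple $\gamma_2 \in \Gamma_2$ and a maximal $K$-torus $T_2 \subset \mathscr{G}_2$ with a nontrivial character relation $\chi_1(\gamma_1) = \chi_2(\gamma_2)$, and the combinatorial rigidity coming from genericity forces $T_2$ to be $K$-isomorphic to $T_1$ as an abstract torus. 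Since a given $K$-torus can embed as a maximal torus in only finitely many inner $K$-forms of $\overline{G}_2$ -- the ambiguity being controlled place by place by the local behavior of $T_2$ and the relevant cohomological invariants -- the local Tits algebras of $\mathscr{G}_2$ are constrained at every place $v$ of $K$. In particular, $\mathscr{G}_2$ can ramify only within a finite set of places determined by $T_1$.

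The finishing step is then the Hasse--Brauer--Noether finiteness theorem. For inner forms of type $\textsf{A}_n$, $\mathscr{G}_2 = \mathrm{PGL}_1(D_2)$ for a central simple $K$-algebra $D_2$ of degree $n+1$ whose ramification set and local invariants are now bounded, so only finitely many $D_2$ can occur; outer forms add the choice of a quadratic \'etale extension $L/K$, which is again determined by $T_1$ up to finitely many possibilities, and the Brauer analysis is repeated over $L$. Types $\textsf{D}_{2n+1}$ and $\textsf{E}_6$ are handled in parallel using the Tits class in $H^2(K, -)$ in the spirit of \cite{Gar}, again with the ramification bounded by the generic-torus data. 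The principal obstacle is the middle step: converting the abstract multiplicative information carried by a pair of weakly commensurable semi-simple elements into uniform local control on $\mathscr{G}_2$ at every place of $K$. This requires a careful study of how $K$-tori embed as maximal tori in the different $K$-forms of a given inner type, and constitutes the technical heart of the argument.
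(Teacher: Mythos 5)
The reduction in your first paragraph (Theorems \ref{T:WC1}, \ref{T:WC3}, \ref{T:WC4} pin down the type, the field $K$, the set $S$, and settle all but the types $\textsf{A}_n$, $\textsf{D}_{2n+1}$, $\textsf{E}_6$) is fine, but the middle step of your argument has a genuine gap, and it is exactly the step you yourself flag as the ``technical heart.'' You propose to take a \emph{single} generic element $\gamma_1$, deduce from the Isogeny Theorem \ref{T:Gen4} that its torus $T_1$ (essentially) embeds into $\mathscr{G}_2$, and then claim that a fixed maximal $K$-torus can sit inside only finitely many $K$-forms, so that the ramification of $\mathscr{G}_2$ is confined to a finite set of places determined by $T_1$. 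That claim is false. Already for inner type $\textsf{A}_1$: a fixed quadratic field $E/K$ embeds into \emph{every} quaternion division algebra over $K$ whose ramification places are non-split in $E$, and by Chebotarev there are infinitely many such places, hence infinitely many such algebras. The same happens in degree $n$: for a generic $E$ (Galois group $S_n$) there is a positive-density set of places $v$ at which all local degrees of $E$ exceed $1$, and at such $v$ the algebra $D_2$ may carry an arbitrary invariant. So one common torus gives no finite bound on the ramification set, and your concluding (ABHN) finiteness step has no premise to stand on. The information that must be extracted from weak commensurability is not one embedded torus but \emph{local control at every place outside a fixed finite set}, and this cannot come from a single $\gamma_1$.

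The way the paper's own argument (see the proof of Theorem \ref{T:Finite-NF}, which is the blueprint; the proof of Theorem \ref{T:WC5} in \cite{PR1} runs along the same lines) fills this is: first, a Lemma \ref{L:5-1}-type comparison of Galois images shows $\mathscr{G}_2$ and $\mathscr{G}_1$ have the same minimal field $L$ of inner type, so $\mathscr{G}_2$ is an inner twist of the fixed quasi-split form $G_0$; second, for \emph{each} place $v$ outside a finite set $S'$ (chosen so that $\mathscr{G}_1$ is quasi-split at $v$ and the closure of $\Gamma_1$ in the $v$-adic group is open, via Weisfeiler's strong approximation), one produces a generic element of $\Gamma_1$ \emph{adapted to $v$}, namely one whose torus contains a maximal $K_v$-split torus; weak commensurability then gives $\mathrm{rk}_{K_v}\mathscr{G}_2 \geqslant \mathrm{rk}_{K_v}\mathscr{G}_1$, and an inner twist of a quasi-split group of at least the same rank is quasi-split (the $*$-action argument in Lemma \ref{L:5-2}). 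Only now is the ramification confined to $S'$, and the finiteness follows for \emph{all} types at once from the finiteness of $\Sigma_{S'} = \mathrm{Ker}\big(H^1(K, G_0) \to \bigoplus_{v \notin S'} H^1(K_v, G_0)\big)$ (Serre, \cite[Ch.\,III, \S 4]{Serre}), with no need for the type-by-type Brauer/Tits-algebra analysis in your last paragraph. So the missing idea is the per-place family of generic elements supplied by strong approximation together with the rank/quasi-splitness comparison; without it the finiteness assertion does not follow from the data you use.
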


\vskip2mm

A noteworthy fact about weak commensurability is that it has the
following implication for the existence of unipotent elements in
arithmetic subgroups (even though it is formulated entirely in terms
of semi-simple ones). We recall that a semi-simple $K$-group is
called $K$-{\it isotropic} if $\mathrm{rk}_K\: G > 0$; in
characteristic zero, this is equivalent to the existence of
nontrivial unipotent elements in $G(K)$. Moreover, if $K$ is a
number field then $G$ is $K$-isotropic if and only if every
$S$-arithmetic subgroup contains unipotent elements, for any $S$.
\begin{thm}\label{T:WC6}
{\rm (\cite[Theorem 6]{PR1})} Let $G_1$  and $G_2$ be two connected
absolutely almost simple algebraic groups defined over a field $F$
of characteristic zero. For $i = 1,2$, let $\Gamma_i$ be a
Zariski-dense $(\mathscr{G}_i ,K , S)$-arithmetic subgroup of
$G_i(F)$. If $\Gamma_1$ and $\Gamma_2$ are weakly commensurable then
$\mathrm{rk}_K\mathscr{G}_1 =\mathrm{rk}_K\mathscr{G}_2$; in
particular, if $\mathscr{G}_1$ is $K$-isotropic, then so is
$\mathscr{G}_2$.
\end{thm}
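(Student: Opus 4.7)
The plan is to reduce, by Theorem \ref{T:WC3}, to the case $K_1 = K_2 =: K$ and $S_1 = S_2 =: S$, so that $\mathscr{G}_1$ and $\mathscr{G}_2$ are two absolutely almost simple $K$-groups with weakly commensurable $(\mathscr{G}_i, K, S)$-arithmetic subgroups $\Gamma_i$, and then to prove equality of the $K$-ranks $r_i := \mathrm{rk}_K \mathscr{G}_i$ by establishing $r_2 \geqslant r_1$ and invoking symmetry. Assume $r_1 \geqslant 1$, pick a maximal $K$-split torus $A \subset \mathscr{G}_1$ of dimension $r_1$ contained in some maximal $K$-torus $T_1$, and (after passing to a finite-index subgroup of $\Gamma_1$, which is permitted by the remarks in \S\ref{S:WC}) choose a regular $\gamma_1 \in A(\cO_K(S)) \cap \Gamma_1$. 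By Dirichlet's $S$-unit theorem, this intersection has $\Z$-rank $r_1(|S|-1)$, so such a $\gamma_1$ exists and may be assumed generic in $A$.

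Since $A$ is $K$-split, the eigenvalues of $\gamma_1$ arising from the relative root system of $\mathscr{G}_1$ lie in $\cO_K(S)^\times \subset K^\times$, and a generic $\gamma_1$ produces $r_1$ multiplicatively independent $S$-units $u_1, \ldots, u_{r_1}$ among them. Weak commensurability then supplies, for each $j$, a semi-simple $\gamma_2^{(j)} \in \Gamma_2$ lying in some maximal $K$-torus $T_2^{(j)} \subset \mathscr{G}_2$, together with a character $\chi_j \in X(T_2^{(j)})$ satisfying $\chi_j(\gamma_2^{(j)}) = u_j^{m_j}$ for some nonzero integer $m_j$. The technical heart of the argument is to turn each such relation into a genuine $K$-rational character on $T_2^{(j)}$: since $\chi_j(\gamma_2^{(j)}) \in K^\times$, for every $\sigma \in \Ga(\overline{K}/K)$ one has $(\sigma \chi_j \chi_j^{-1})(\gamma_2^{(j)}) = 1$, so if $\gamma_2^{(j)}$ generates a Zariski-dense subgroup of $T_2^{(j)}$ the character $\sigma \chi_j \chi_j^{-1}$ must be trivial, forcing $\chi_j$ to be $\Ga(\overline{K}/K)$-invariant, i.e.\ $K$-rational.

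A nontrivial $K$-rational character of infinite order on a maximal $K$-torus of $\mathscr{G}_2$ means that $\mathscr{G}_2$ has positive $K$-split rank, which already yields the $K$-isotropy half of the theorem. To upgrade this to the desired inequality $r_2 \geqslant r_1$, one coordinates the construction so that the $\chi_j$ all lie on a single maximal $K$-torus $T_2 \subset \mathscr{G}_2$ and remain $\Q$-linearly independent inside $X(T_2)$, so that their joint kernel defines a $K$-split subtorus of $T_2$ of dimension at least $r_1$.

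The main obstacle is the genericity hypothesis on $\gamma_2^{(j)}$: weak commensurability alone does not prevent this element from lying in a proper subtorus of $T_2^{(j)}$, which would invalidate the Galois-invariance deduction and leave the character $\sigma \chi_j \chi_j^{-1}$ possibly nontrivial. The way around this is to invoke the theory of generic semi-simple elements and generic tori in Zariski-dense subgroups developed in \cite[\S 9]{PR-Gen} (see also \S\ref{S:Tori}), which supplies in $\Gamma_2$ an ample reservoir of elements whose centralizers are maximal $K$-tori on which $\Ga(\overline{K}/K)$ acts as the full Weyl group; replacing $\gamma_2^{(j)}$ by a suitable product with a high power of such a generic element preserves the required eigenvalue coincidence (after adjusting $m_j$) while forcing the new element to generate a Zariski-dense subgroup of its ambient torus. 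With this in place the Galois argument above runs cleanly and, together with the coordination step, yields the theorem.
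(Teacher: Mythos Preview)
The paper does not itself prove Theorem~\ref{T:WC6}; it is quoted from \cite[Theorem~6]{PR1}, and the remark immediately after the statement indicates that the argument in \cite[\S7]{PR1} actually yields the stronger conclusion that the Tits indices of $\mathscr{G}_1/K$ and $\mathscr{G}_2/K$ are isomorphic. So there is no in-paper proof to compare against, but your sketch contains genuine gaps that would block it even as an independent argument.

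The most concrete one is the very first move: you place $\gamma_1$ inside the maximal $K$-split torus $A$ and invoke Dirichlet to obtain an element of infinite order there. But $A(\cO_K(S))$ has $\Z$-rank $r_1(|S|-1)$, which vanishes when $|S|=1$ (e.g.\ $K=\Q$, $S=V_\infty^{\Q}$, or $K$ imaginary quadratic with $S=V_\infty^K$). In that regime $A(\cO_K(S))$ is finite and your $\gamma_1$ does not exist, even though the theorem certainly applies --- take $\mathscr{G}_1=\mathrm{SL}_n/\Q$ with $\Gamma_1=\mathrm{SL}_n(\Z)$. The semi-simple elements of infinite order in $\Gamma_1$ then sit in $K$-\emph{anisotropic} maximal tori, their eigenvalues are not in $K^\times$, and the Galois-invariance step you rely on never starts.

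Even when $|S|>1$, two further steps are unjustified. First, the ``coordination'' claim --- that the $\chi_j$ can be made to live on a \emph{single} maximal $K$-torus $T_2\subset\mathscr{G}_2$ and to be $\Q$-independent there --- is the crux and is asserted without argument; weak commensurability matches one element of $\Gamma_1$ with one element of $\Gamma_2$ and provides no mechanism for funneling several such matches into a common torus on the $\mathscr{G}_2$ side. Second, your repair for the genericity obstacle (multiplying $\gamma_2^{(j)}$ by a high power of a generic element) in general moves $\gamma_2^{(j)}$ out of $T_2^{(j)}$ altogether and destroys the relation $\chi_j(\gamma_2^{(j)})=u_j^{m_j}$; adjusting $m_j$ cannot fix this.

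Your outline does essentially recover the isotropy clause $r_1\geqslant 1\Rightarrow r_2\geqslant 1$ when $|S|>1$: a single nontrivial $K$-rational character on the Zariski closure of $\langle\gamma_2\rangle$ already forces $\mathscr{G}_2$ to be $K$-isotropic. But the full equality $\mathrm{rk}_K\mathscr{G}_1=\mathrm{rk}_K\mathscr{G}_2$ requires a different mechanism; in \cite{PR1} it comes from a place-by-place comparison leading to equality of Tits indices, not from exhibiting a split subtorus of the right dimension directly.
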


We note that in \cite[\S 7]{PR1} we prove a somewhat more precise
result, viz. that if $G_1$ and $G_2$ are of the same type, then the
Tits indices of $\mathscr{G}_1/K$ and $\mathscr{G}_2/K$ are
isomorphic, but we will not get into these technical details here.

\vskip2mm

The following result asserts that a lattice{\footnote{ A discrete
subgroup $\Gamma$ of a locally compact topological group $\cG$ is
said to be a lattice in $\cG$ if $\cG/\Gamma$ carries a {\it finite}
$\cG$-invariant Borel measure.}} which is weakly commensurable with
an $S$-arithmetic group is itself $S$-arithmetic.
\begin{thm}\label{T:WC7}
{\rm (\cite[Theorem 7]{PR1})} Let $G_1$ and $G_2$ be two connected
absolutely almost simple algebraic groups defined over a nondiscrete
locally compact field $F$ of characteristic zero, and for $i =1,
\,2$, let $\Gamma_i$ be a Zariski-dense lattice in $G_i(F).$ Assume
that $\Gamma_1$ is a $(K , S)$-arithmetic subgroup of $G_1(F)$. If
$\Gamma_1$ and $\Gamma_2$ are weakly commensurable, then $\Gamma_2$
is a $(K , S)$-arithmetic subgroup of $G_2(F)$.
\end{thm}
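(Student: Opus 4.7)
The plan is to combine the trace-field identification (Theorem \ref{T:WC2}), Vinberg's theorem, and Margulis' arithmeticity theorem, with the matching of $S$ coming at the end from Theorem \ref{T:WC3}.

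First, I would apply Theorem \ref{T:WC2} to conclude $K_{\Gamma_2} = K_{\Gamma_1}$. Since $\Gamma_1$ is Zariski-dense and $(\mathscr{G}_1, K, S)$-arithmetic with $\mathscr{G}_1$ an adjoint $K$-group, the traces of $\mathrm{Ad}\,\gamma$ for $\gamma \in \Gamma_1$ are all $K$-rational, so $K_{\Gamma_1} \subseteq K$; Zariski-density of $\Gamma_1$ in $\mathscr{G}_1$ (over $K$) gives the reverse inclusion. Hence $K_{\Gamma_2} = K$, a number field. Next, invoke Vinberg's theorem on $\Gamma_2$ (as recalled in the paragraph preceding Conjecture 2.6): there is an $F/K$-form $\mathscr{G}_2$ of $\overline{G}_2$, uniquely determined up to $K$-isomorphism, such that after conjugation by an $F$-automorphism of $\overline{G}_2$ one has $\pi_2(\Gamma_2) \subset \mathscr{G}_2(K)$. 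This supplies the candidate $K$-structure for $\Gamma_2$.

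Third, I would transfer the problem to a purely group-theoretic picture over $K_v$, where $v$ is the place of $K$ determined by the fixed embedding $K \hookrightarrow F$, so that the $F$-isomorphism ${}_F \mathscr{G}_2 \simeq \overline{G}_2$ gives $\mathscr{G}_2(K_v) \simeq \overline{G}_2(F)$ and $\pi_2(\Gamma_2)$ becomes a Zariski-dense lattice in $\mathscr{G}_2(K_v)$. Since $\pi_2(\Gamma_2) \subset \mathscr{G}_2(K)$, one is exactly in position to apply Margulis' arithmeticity theorem: if $\mathrm{rk}_{K_v} \mathscr{G}_2 \geqslant 2$, or more generally if the ambient simple factor is not of one of the exceptional rank-one types, the lattice is automatically $S'$-arithmetic for a finite set $S' \supseteq V^K_\infty$, i.e.\ commensurable with $\mathscr{G}_2(\mathcal{O}_K(S'))$. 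Once arithmeticity is established, Theorem \ref{T:WC3} applied to the weakly commensurable pair $(\Gamma_1, \Gamma_2)$ forces $S' = S$, completing the argument in that case.

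The main obstacle is precisely the rank-one real case (e.g.\ $\mathrm{PU}(n,1)$, $\mathrm{PSp}(n,1)$, $\mathrm{PF}_{4(-20)}$, and real hyperbolic groups of type $\mathrm{PO}(n,1)$), where Margulis' arithmeticity is not available. Here I would exploit weak commensurability more aggressively to produce a dense commensurator: using that $\Gamma_1$ is $(K,S)$-arithmetic and contains many generic semisimple elements in the sense of \cite[\S 9]{PR-Gen}, and that by weak commensurability each such element is paired with a semisimple element of $\Gamma_2$ sharing eigenvalue data, one should build, from Galois twists of elements of $\Gamma_1$ transferred through the $K$-structure on $\mathscr{G}_2$ produced in step two, a subgroup of $\mathrm{Comm}_{\mathscr{G}_2(F)}(\Gamma_2)$ that is dense in $\mathscr{G}_2(K)$ and hence in $\mathscr{G}_2(F)$. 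An application of Margulis' commensurator criterion then forces $\Gamma_2$ to be arithmetic, after which Theorem \ref{T:WC3} again pins down $S$. Making this dense-commensurator construction precise is the step where the real technical work lies, and it is the point at which one genuinely uses that $\Gamma_1$ is not merely Zariski-dense but actually $(K,S)$-arithmetic.
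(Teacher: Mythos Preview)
The paper under review is an expository survey and does \emph{not} contain a proof of Theorem~\ref{T:WC7}; the result is simply quoted from \cite[Theorem~7]{PR1}. There is therefore no ``paper's own proof'' to compare against here. That said, let me comment on your outline relative to the argument in \cite{PR1} and on its internal soundness.

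Your first three steps --- identifying $K_{\Gamma_2}=K$ via Theorem~\ref{T:WC2}, producing the $F/K$-form $\mathscr{G}_2$ by Vinberg, and invoking Margulis' arithmeticity in the higher-rank regime, followed by Theorem~\ref{T:WC3} to pin down $S$ --- are exactly the skeleton of the argument in \cite{PR1}. One should also fold in the non-archimedean case (lattices in simple $p$-adic groups are arithmetic without rank restrictions) and the superrigidity/arithmeticity results of Corlette and Gromov--Schoen for $\mathrm{Sp}(n,1)$ and $F_4^{(-20)}$, which dispose of two of the four real rank-one families immediately. This leaves only the real and complex hyperbolic cases $\mathrm{PO}(n,1)$ and $\mathrm{PU}(n,1)$, where non-arithmetic lattices genuinely exist.

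The gap in your proposal is precisely here. You propose to manufacture a dense commensurator for $\Gamma_2$ out of ``Galois twists of elements of $\Gamma_1$ transferred through the $K$-structure on $\mathscr{G}_2$,'' but weak commensurability only matches \emph{eigenvalue data} of individual semisimple elements; it gives no mechanism for producing actual elements of $\mathscr{G}_2(F)$ that normalize $\Gamma_2$ up to finite index. An element $g\in\mathscr{G}_2(K)$ whose eigenvalues agree (up to powers) with those of some $\gamma_1\in\Gamma_1$ has no a~priori reason to lie in $\mathrm{Comm}_{\mathscr{G}_2(F)}(\Gamma_2)$. The proof in \cite{PR1} does use Margulis' commensurator criterion in these residual rank-one cases, but the density of the commensurator is obtained by a different route: one shows, using the structure of maximal $K$-tori meeting $\Gamma_2$ and the arithmetic nature of the tori coming from $\Gamma_1$, that $\Gamma_2$ is contained in (a conjugate of) $\mathscr{G}_2(K)$ and that $\mathscr{G}_2(K)$ commensurates $\Gamma_2$. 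Your sketch gestures at this but does not supply the bridge from ``shared eigenvalue subgroups'' to ``$g\Gamma_2 g^{-1}$ is commensurable with $\Gamma_2$,'' and that bridge is the substantive content of the rank-one argument.
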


\vskip2mm

According to Theorem \ref{T:WC1}, if $G_1$ and $G_2$ contain weakly
commensurable finitely generated Zariski-dense subgroups then either
the groups are of the same Killing-Cartan type, or one of them is of
type $\textsf{B}_n$ and the other is of type $\textsf{C}_n$ for some
$n \geqslant 3$. Weakly commensurable $S$-arithmetic subgroups in
the first case were analyzed in Theorem \ref{T:WC4} (see also the
discussion thereafter). We conclude this section with a recent
result of Skip Garibaldi and the second-named author \cite{GarR}
which gives a criterion for two Zariski-dense $S$-arithmetic
subgroups in the groups of type $\textsf{B}_n$ and $\textsf{C}_n$ to
be weakly commensurable. To formulate the result we need the
following definition. Let $\mathscr{G}_1$ and $\mathscr{G}_2$ be
absolutely almost simple algebraic groups of types $\textsf{B}_n$
and $\textsf{C}_n$ with $n \geqslant 2$, respectively, over a number
field $K$. We say that $\mathscr{G}_1$ and $\mathscr{G}_2$ are {\it
twins} (over $K$) if for each $v \in V^K$, either both groups are
split or both are anisotropic over the completion $K_v$. (We note
that since groups of these types cannot be anisotropic over $K_v$
when $v$ is nonarchimedean, our condition effectively says that
$\mathscr{G}_1$ and $\mathscr{G}_2$ must be $K_v$-split for {\it
all} nonarchimedean $v$.)
\begin{thm}\label{T:BC77}
{\rm (\cite[Theorem 1.2]{GarR})} Let $G_1$ and $G_2$ be absolutely
almost simple algebraic groups over a field $F$ of characteristic
zero of  Killing-Cartan types $\textsf{B}_n$ and $\textsf{C}_n$
$(n \geqslant 3)$ respectively, and let $\Gamma_i$ be a
Zariski-dense $(\mathscr{G}_i, K, S)$-arithmetic subgroup of
$G_i(F)$ for $i = 1, 2$. Then $\Gamma_1$ and $\Gamma_2$ are weakly
commensurable if and only if the groups $\mathscr{G}_1$ and
$\mathscr{G}_2$ are twins.
\end{thm}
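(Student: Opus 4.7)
The strategy exploits the Langlands duality between the root systems $\textsf{B}_n$ and $\textsf{C}_n$: they share the Weyl group $W = \{\pm 1\}^n \rtimes S_n$, and the character lattice of a maximal torus in a $\textsf{B}_n$-group is canonically identified with the cocharacter lattice of a maximal torus in a $\textsf{C}_n$-group (and vice versa). This duality is the combinatorial mechanism that allows weak commensurability across distinct Killing--Cartan types.

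\emph{Necessity.} Suppose $\Gamma_1$ and $\Gamma_2$ are weakly commensurable. Theorem \ref{T:WC3} gives $K_1 = K_2 = K$ and $S_1 = S_2 = S$. After passing to neat finite-index subgroups (permissible by \S 2), pick a generic semisimple $\gamma_1 \in \Gamma_1$, so that $T_1 := C_{\mathscr{G}_1}(\gamma_1)^0$ is a maximal $K$-torus whose splitting field has Galois group the full Weyl group $W$. By weak commensurability, produce semisimple $\gamma_2 \in \Gamma_2$ of infinite order such that suitable powers of $\gamma_1$ and $\gamma_2$ share a nontrivial eigenvalue; neatness together with a standard genericity argument lets one take $\gamma_2$ generic as well, so $T_2 := C_{\mathscr{G}_2}(\gamma_2)^0$ is likewise a maximal $K$-torus. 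Reading the eigenvalue match through the $\textsf{B}_n \leftrightarrow \textsf{C}_n$ identification of Weyl groups forces $T_2$ to be Langlands-dual to $T_1$ as a Galois module over $K$; in particular, $\mathrm{rk}_{K_v} T_1 = \mathrm{rk}_{K_v} T_2$ at every $v \in V^K$. Combining this with Theorem \ref{T:WC6} and the classification of maximal $K_v$-tori in classical groups over local fields (together with the structure of their anisotropic kernels) forces $\mathscr{G}_1 \otimes K_v$ and $\mathscr{G}_2 \otimes K_v$ to be simultaneously split or simultaneously anisotropic at every $v$. Hence $\mathscr{G}_1$ and $\mathscr{G}_2$ are twins.

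\emph{Sufficiency.} Assume $\mathscr{G}_1$ and $\mathscr{G}_2$ are twins. For any generic semisimple $\gamma_1 \in \Gamma_1$ with centralizer $T_1 \subset \mathscr{G}_1$, form the Langlands-dual $K$-torus $T_1^\vee$ determined by $T_1$ via the shared Weyl group. The twin hypothesis implies that $T_1^\vee \otimes K_v$ admits a $K_v$-embedding into $\mathscr{G}_2 \otimes K_v$ at every $v$: if both $\mathscr{G}_i \otimes K_v$ are split, then every torus of the correct Weyl type embeds, while if both are anisotropic (which occurs only at certain real places), the admissible tori on the two sides correspond identically under the duality. A Hasse-principle-style argument for embeddings of tori into absolutely almost simple groups then promotes these local embeddings to a global $K$-embedding $T_1^\vee \hookrightarrow \mathscr{G}_2$. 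Finally, $S$-arithmetic approximation in $T_1^\vee$, together with passage to suitable powers, produces an element $\gamma_2 \in T_1^\vee(K) \cap \Gamma_2$ whose eigenvalues pair with those of $\gamma_1$ through the duality, giving weak commensurability of $\gamma_1$ and $\gamma_2$. Running this construction on a Zariski-dense family of generic elements, and using the symmetric argument starting from $\Gamma_2$, establishes weak commensurability of $\Gamma_1$ and $\Gamma_2$.

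\emph{Main obstacle.} The hardest step is the Hasse principle for $K$-embeddings of the dual torus $T_1^\vee$ into $\mathscr{G}_2$ in the sufficiency direction. One must isolate the precise Galois-cohomological obstruction to such an embedding and verify that it vanishes exactly under the twin hypothesis. It is here that the local splitting/anisotropy data of $\mathscr{G}_1$ and $\mathscr{G}_2$ is converted into a global embedding, and where the asymmetry between the two Killing--Cartan types is bridged; the restriction $n \geqslant 3$ also enters here, through the structure of the anisotropic kernels of the two types of classical groups over local fields.
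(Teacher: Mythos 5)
Your proposal correctly identifies the two central mechanisms used in Garibaldi--Rapinchuk: the duality between the $\textsf{B}_n$ and $\textsf{C}_n$ root systems (which share the Weyl group $W = (\Z/2)^n\rtimes S_n$), and the passage from weakly commensurable generic elements to $K$-isogenous generic tori via the Isogeny Theorem \ref{T:Gen4}. You also correctly single out the local-global principle for embedding tori into groups of types $\textsf{B}$ and $\textsf{C}$ as the decisive technical tool in the sufficiency direction. These are indeed the pillars of the actual argument.

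There is, however, a genuine gap in your necessity argument. You deduce $\mathrm{rk}_{K_v} T_1 = \mathrm{rk}_{K_v} T_2$ for the pair of isogenous generic tori produced by the Isogeny Theorem (this part is fine, since the reflection representation of $W$ is self-dual over $\Q$, so an isogeny preserves local ranks), and then assert that this, together with the classification of local forms, forces $\mathscr{G}_1\otimes K_v$ and $\mathscr{G}_2\otimes K_v$ to be simultaneously split or anisotropic. That implication fails at archimedean places: for $v$ real and $n\geqslant 3$, groups of types $\textsf{B}_n$ and $\textsf{C}_n$ have several real forms of intermediate $\R$-rank (e.g.\ $\mathrm{SO}(6,1)$ and $\mathrm{Sp}(2,1)$ for $n=3$, both of $\R$-rank $1$), and rank equality of tori alone does not separate these from the split/anisotropic configurations. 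What is actually needed is a finer invariant; in the reference this is extracted from the Tits algebras (the Clifford algebra on the $\textsf{B}_n$ side and the underlying algebra with symplectic involution on the $\textsf{C}_n$ side) via the cohomological refinement of the Isogeny Theorem due to Garibaldi. Your proof does not touch this.

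In the sufficiency direction, the phrase ``whose eigenvalues pair with those of $\gamma_1$ through the duality'' hides a real issue: weak commensurability demands an actual coincidence of character values $\chi_1(\gamma_1)=\chi_2(\gamma_2)\neq 1$, and a priori the $K$-points of $T_1$ and of its Langlands-dual torus have no such relation. What makes the argument work is precisely that $T_1^\vee$ is not merely combinatorially dual but $K$-\emph{isogenous} to $T_1$ (again because the reflection representation of $W$ is $\Q$-self-dual), and that the intersections $\Gamma_i\cap T_i$ are $S$-arithmetic subgroups of the corresponding tori of $S$-units, so commensurable up to isogeny. You should make this explicit rather than appeal to a vague ``pairing through duality.'' Finally, you correctly flag the Hasse principle for torus embeddings as the hard step, but you do not indicate why the obstruction vanishes precisely under the twin hypothesis -- which is exactly the place where the theorem's content lives. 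As written, the proposal is a reasonable outline with the right ingredients, but both directions contain unjustified steps at the points where the twin condition actually enters.
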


\vskip.5mm

(We recall that according to Theorem \ref{T:WC3}, if Zariski-dense
$(\mathscr{G}_1, K_1, S_1)$- and $(\mathscr{G}_2, K_2,
S_2)$-arithmetic subgroups are weakly commensurable then necessarily
$K_1 = K_2$ and $S_1= S_2$, so Theorem \ref{T:BC77} in fact treats
the most general situation.)

\vskip3mm

\section{Absolutely almost simple algebraic groups having the same
maximal tori}\label{S:Tori}

\vskip2mm

\noindent {\bf 4.1.} The analysis of weak commensurability leads to,
and also depends on, problems of an algebraic nature
that in  broad terms can be described as {\it characterizing
absolutely almost simple algebraic groups over a given (nice) field
$K$ having the same isomorphism/isogeny classes of maximal
$K$-tori}. While these problems are not new (for example, in the
context of finite-dimensional central simple algebras they can be
traced back to such classical algebraic results as Amitsur's Theorem
\cite{Ami} on generic splitting fields - cf.\,\S 4.4 below), there
has been a noticeable resurgence of interest in them in recent
years. One should mention \cite{Garge} and \cite{Kar} where some
aspects of the problem were considered over local and global fields;
the local-global principles for embedding tori into absolutely
almost simple algebraic groups as maximal tori (in particular, for
embedding commutative \'etale algebras with involutive automorphisms
into simple algebras with involution) have been analyzed in \cite{Bayer},
\cite{Gar}, \cite{Lee}, \cite{PR-invol}; some number-theoretic
applications have been given in \cite{Fiori}. In this section, we will
focus primarily on those aspects of the problem that are related to
the study of weak commensurability and particularly to Conjecture
2.6. The most recent results here analyze division algebras having
the same maximal subfields and/or the same splitting fields
\cite{CRR1}, \cite{CRR2}, \cite{GS}, \cite{KMcK}, \cite{RR}. These
results provide, in particular,  substantial supporting evidence for
the Finiteness Conjecture 4.12  about absolutely almost simple
algebraic $K$-groups having the same isomorphism classes of maximal
tori, and hence also for Conjecture 2.6. We will return to  weak
commensurability in the next section and present a result that
indicates a unified approach to both conjectures 2.6 and 4.12 (cf.\,\S \ref{S:Finite}).

\vskip2mm

\noindent {\bf 4.2. Generic elements and the Isogeny Theorem.} We
begin by describing in more precise terms the connection between
weak commensurability and study of  absolutely almost
simple algebraic groups having the same isomorphism classes of
maximal tori. This connection is based on the Isogeny Theorem (see
below) to formulate which we need to recall the notion of {\it
generic tori} and {\it generic elements}.

Let $G$ be a connected absolutely almost simple algebraic group defined over an
infinite field $K$. Fix a maximal $K$-torus $T$ of $G$, and, as
usual, let $\Phi = \Phi(G , T)$ denote the corresponding root
system, and let $W(G , T)$ be its Weyl group. Furthermore, we let
$K_T$ denote the (minimal) splitting field of $T$ in a fixed
separable closure $\overline{K}$ of $K$. Then the natural action of
the Galois group $\Ga(K_T/K)$ on the character group $X(T)$ of $T$
induces an injective homomorphism
$$
\theta_T \colon \Ga(K_T/K) \to \mathrm{Aut}(\Phi(G , T)).
$$
We say that $T$ is {\it generic} (over $K$) if
\begin{equation}\label{E:Gen1}
\theta_T(\Ga(K_T/K)) \supset W(G , T).
\end{equation}
(Note that such a torus is automatically $K$-{\it irreducible}, i.e.
it does not contain proper $K$-subtori.) For example, any maximal
$K$-torus of $G = \mathrm{SL}_n/K$ is of the form $T =
\mathrm{R}^{(1)}_{E/K}(\mathrm{GL}_1)$ for some $n$-dimensional
commutative  \'etale $K$-algebra $E$. Then such a torus is generic
over $K$ if and only if $E$ is a separable field extension of $K$
and the Galois group of the normal closure $L$ of $E$ over $K$ is
isomorphic to the symmetric group $S_n$.
Furthermore, a regular semi-simple element $g \in G(K)$ is called
{\it generic} (over $K$) if the $K$-torus $T = Z_G(g)^{\circ}$
(the identity component of the centralizer $Z_G(g)$ of $g$ in $G$)
is generic (over $K$) in the sense defined above. We are
now in a position to formulate a result that enables one to pass
from the weak commensurability of two generic elements to an
isogeny, and in most cases even to an isomorphism, of the ambient
tori. \addtocounter{thm}{2}
\begin{thm}\label{T:Gen4}
{\rm (Isogeny Theorem, \cite[Theorem 4.2]{PR1})} Let $G_1$ and $G_2$
be two connected absolutely almost simple algebraic groups defined
over an infinite field $K$, and let $L_i$ be the minimal Galois
extension of $K$ over which $G_i$ becomes an inner form of a split
group. Suppose that for $i = 1, 2$, we are given a semi-simple
element $\gamma_i \in G_i(K)$ contained in a maximal $K$-torus $T_i$
of $G_i$. Assume that (i) $G_1$ and $G_2$ are either of the same
Killing-Cartan type, or one of them is of type $\textsf{B}_{n}$ and
the other is of type $\textsf{C}_{n}$, (ii) $\gamma_1$ has infinite
order, (iii) $T_1$ is $K$-irreducible, and (iv) $\gamma_1$ and
$\gamma_2$ are weakly commensurable. Then

\vskip2mm

\noindent {\rm (1)} \parbox[t]{16cm}{there exists a $K$-isogeny $\pi
\colon T_2 \to T_1$ which carries $\gamma^{m_2}_2$ to
$\gamma^{m_1}_1$ for some integers $m_1 , m_2 \geqslant 1$;}

\vskip1mm

\noindent {\rm (2)} \parbox[t]{16cm}{if $L_1 = L_2 =: L$ and
$\theta_{T_1}(\Ga(L_{T_1}/L)) \supset W(G_1 , T_1)$, then $\pi^*
\colon X(T_1) \otimes_{\Z} \Q \to X(T_2) \otimes_{\Z} \Q$ has the
property that $\pi^*(\Q \cdot \Phi(G_1 , T_1)) = \Q \cdot \Phi(G_2 ,
T_2)$. Moreover, if $G_1$ and $G_2$ are of the same Killing-Cartan
type different from $\textsf{B}_2 = \textsf{C}_2$, $\textsf{F}_4$ or
$\textsf{G}_2$, then a suitable rational multiple of $\pi^*$  maps
$\Phi(G_1 , T_1)$ onto $\Phi(G_2 , T_2)$.}
\end{thm}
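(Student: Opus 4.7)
The plan is to translate the weak commensurability of $\gamma_1$ and $\gamma_2$ into a $\Ga(\overline{K}/K)$-equivariant isomorphism $X(T_1)\otimes\Q \xrightarrow{\sim} X(T_2)\otimes\Q$ and then clear denominators to obtain the dual $K$-isogeny $\pi\colon T_2 \to T_1$. Using the equivalent reformulation of weak commensurability, I obtain characters $\chi_i \in X(T_i)$ with $\chi_1(\gamma_1) = \chi_2(\gamma_2) =: a \neq 1$. Hypotheses (ii) and (iii) together imply that the Zariski closure of $\langle\gamma_1\rangle$ is a nontrivial $K$-subtorus of $T_1$, hence equal to $T_1$, so the evaluation map $\mathrm{ev}_1\colon X(T_1) \to \overline{K}^{\times}$, $\chi \mapsto \chi(\gamma_1)$, is injective.

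For the construction of the isogeny, I use that both $\mathrm{ev}_1$ and $\mathrm{ev}_2$ are $\Ga(\overline{K}/K)$-equivariant since each $\gamma_i$ is $K$-rational, so the $\Ga(\overline{K}/K)$-submodule of $\overline{K}^{\times}$ generated by $a$ lies in the image of both. Inverting $\mathrm{ev}_1$ and factoring $\mathrm{ev}_2$ through $X(T_2)/\ker\mathrm{ev}_2 \cong X(T_2')$, where $T_2' \subseteq T_2$ is the Zariski closure of $\langle\gamma_2\rangle$, produces a $\Ga(\overline{K}/K)$-equivariant map from $\Ga(\overline{K}/K)\cdot\chi_1 \subseteq X(T_1)$ into $X(T_2')$. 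The $K$-irreducibility of $T_1$ forces $\Q \cdot \Ga(\overline{K}/K) \cdot \chi_1 = X(T_1)\otimes\Q$, giving a $\Ga$-equivariant injection $X(T_1)\otimes\Q \hookrightarrow X(T_2')\otimes\Q$; hypothesis (i) then forces $\dim T_2' = \dim T_2 = \dim T_1$, so $T_2' = T_2$ and the injection is an isomorphism. Clearing denominators yields a $\Ga(\overline{K}/K)$-equivariant injection $\pi^*\colon X(T_1) \hookrightarrow X(T_2)$ of finite cokernel, which dualizes to the required $K$-isogeny $\pi\colon T_2 \to T_1$; a direct chase of the evaluation identity on the generating set $\Ga(\overline{K}/K)\cdot\chi_1$ then produces $\pi(\gamma_2^{m_2}) = \gamma_1^{m_1}$ for integers $m_1, m_2 \geqslant 1$.

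For part (2), the hypothesis $L_1 = L_2 = L$ means each $G_i$ is an inner form of a split group over $L$, so that $\Ga(L_{T_i}/L)$ acts on $X(T_i)$ through a subgroup of $W(G_i, T_i)$; combined with the assumption $\theta_{T_1}(\Ga(L_{T_1}/L)) \supseteq W(G_1, T_1)$, this yields equality for $i = 1$. The $\Ga(\overline{K}/K)$-equivariance of $\pi^*$ then realizes a conjugation-induced injective homomorphism $W(G_1, T_1) \hookrightarrow W(G_2, T_2)$, which is an isomorphism because $|W(G_1, T_1)| = |W(G_2, T_2)|$ under hypothesis (i). Since $\Q\cdot\Phi(G_i, T_i)$ is the unique $W(G_i, T_i)$-invariant complement of the fixed subspace in $X(T_i)\otimes\Q$, this intertwining forces $\pi^*(\Q\cdot\Phi(G_1, T_1)) = \Q\cdot\Phi(G_2, T_2)$. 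For the concluding rigidity statement, a $W$-equivariant $\Q$-linear isomorphism between the rational spans of two root systems of the same type is determined up to a single rational scalar (by Schur's lemma applied to the irreducible reflection representation, together with the rigid match of short-to-long root length ratios when two orbits are present), except in the types $\textsf{B}_2 = \textsf{C}_2$, $\textsf{F}_4$, $\textsf{G}_2$, which admit extra automorphisms rescaling short and long roots independently. I expect the main obstacle to lie in the Galois-equivariant bookkeeping of part (1), specifically in ensuring that clearing denominators preserves enough of the evaluation identity to yield $\pi(\gamma_2^{m_2}) = \gamma_1^{m_1}$; for the moreover in (2), the delicate point is producing the uniform rational scalar outside the three exceptional types.
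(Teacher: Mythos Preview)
The paper does not give a proof of this theorem; it is merely quoted from \cite[Theorem 4.2]{PR1}, so there is no in-paper argument to compare against. I can, however, comment on the soundness of your outline on its own terms.

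Your construction of the isogeny in part~(1) is correct in substance. The injectivity of $\mathrm{ev}_1$ follows exactly as you say, and the Galois-equivariance of both evaluation maps lets you transport the $\Ga(\overline{K}/K)$-orbit of $\chi_1$ to $X(T_2)/\ker\mathrm{ev}_2$ in a well-defined way (the relations among the $\sigma\chi_1$ are detected by $\mathrm{ev}_1$, hence match those among the $\sigma\chi_2$ modulo $\ker\mathrm{ev}_2$). The rank comparison via hypothesis~(i) then forces $\ker\mathrm{ev}_2$ to be finite, and clearing denominators yields $\pi^*$. One point you left vague deserves care: after clearing denominators you obtain $(\pi^*\chi)(\gamma_2)^M = \chi(\gamma_1)^{NM}$ for each $\chi$, so $\pi(\gamma_2)\cdot\gamma_1^{-N}$ is a torsion element of $T_1(K)$; killing it by a further power gives the integers $m_1,m_2$. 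This is routine but should be said.

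In part~(2) your reasoning contains a harmless slip and a genuine gap. The slip: for an absolutely almost simple group one has $\Q\cdot\Phi(G_i,T_i)=X(T_i)\otimes\Q$ already, so the fixed subspace you invoke is zero and the first assertion of~(2) is immediate from~(1); your ``complement'' argument is misdirected but the conclusion holds. The gap is in the ``moreover''. Schur's lemma tells you that $W$-intertwiners $X(T_1)\otimes\Q\to X(T_2)\otimes\Q$ form a one-dimensional $\Q$-space, but this only helps if you know \emph{some} intertwiner carries $\Phi_1$ onto $\Phi_2$. That amounts to showing that the abstract isomorphism $W(G_1,T_1)\to W(G_2,T_2)$ induced by conjugation by $\pi^*$ differs from one induced by a root-system isomorphism only by an \emph{inner} automorphism of $W(G_2,T_2)$. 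This fails precisely for $\textsf{B}_2=\textsf{C}_2$, $\textsf{F}_4$, $\textsf{G}_2$, whose Weyl groups admit outer automorphisms swapping the conjugacy classes of short-root and long-root reflections; for the remaining multiply-laced types $\textsf{B}_n,\textsf{C}_n$ with $n\geqslant 3$ the two reflection classes have different cardinalities and cannot be swapped. You should make this case distinction explicit rather than leave it to the parenthetical remark.
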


It follows that in the situations where $\pi^*$ can be, and has
been, scaled so that $\pi^*(\Phi(G_1 , T_1)) = \Phi(G_2 , T_2)$, the
isogeny $\pi$ induces $K$-isomorphisms $\widetilde{\pi} \colon
\widetilde{T}_2 \to \widetilde{T}_1$ and $\overline{\pi} \colon
\overline{T}_2 \to \overline{T}_1$ between the corresponding tori in
the simply connected and adjoint groups $\widetilde{G}_i$ and
$\overline{G}_i$.
Thus, in most situations, the fact that Zariski-dense torsion-free
subgroups $\Gamma_1 \subset G_1(K)$ and $\Gamma_2 \subset G_2(K)$
are weakly commensurable implies (under some minor technical
assumptions) that $G_1$ and $G_2$ have the same $K$-isogeny classes
(and under some minor additional assumptions -- even the same
$K$-isomorphism classes) of generic maximal  $K$-tori that
nontrivially intersect $\Gamma_1$ and $\Gamma_2$, respectively.
Since over finitely generated fields generic tori, and also generic
elements in a given (finitely generated) Zariski-dense subgroup,
exist in abundance (cf.\,\cite{PR-Reg}, and also \cite[\S
9]{PR-Gen}), this relates $G_1$ and $G_2$ in a significant way and
leads to important results (cf., for example, Theorem
\ref{T:Finite-NF}). So, while the problem of understanding algebraic
groups with the same isomorphism/isogeny classes is not completely
equivalent to the investigation of weak commensurability of
Zariski-dense subgroups (for one thing, not every maximal torus
necessarily intersects a given Zariski-dense subgroup), in practice
it does capture most intricacies of the latter, and in fact the
connection between the problems goes both ways. The next theorem
(cf.\,\cite[Theorem 7.5]{PR1} and \cite[Proposition 1.3]{GarR}),
which is a consequence of the results on weak commensurability
(cf.\,\S \ref{S:Arithm}), illustrates this point.
\begin{thm}\label{T:WC13}
{\rm (1)} Let $G_1$ and $G_2$ be connected absolutely almost simple
algebraic groups defined over a number field $K$, and let $L_i$ be
the smallest Galois extension of $K$ over which $G_i$ becomes an
inner form of a split group. If $G_1$ and $G_2$ have the same
$K$-isogeny classes of maximal $K$-tori then either $G_1$ and $G_2$
are of the same Killing-Cartan type, or one of them is of type
$\textsf{B}_{n}$ and the other is of type $\textsf{C}_{n}$, and
moreover, $L_1 = L_2$.

\vskip2mm

\noindent {\rm (2)} \parbox[t]{16cm}{Fix an absolutely almost simple
$K$-group $G$. Then the set of isomorphism classes of all absolutely
almost simple $K$-groups $G'$ having the same $K$-isogeny classes of
maximal $K$-tori is finite.}

\vskip2mm

\noindent {\rm (3)} \parbox[t]{16cm}{Fix an absolutely almost simple
simply connected $K$-group $G$ whose Killing-Cartan type is
different from $\textsf{A}_{n}$, $\textsf{D}_{2n+1}$ $(n > 1)$ or
$\textsf{E}_6$. Then any $K$-form $G'$ of $G$ (in other words, any
absolutely almost simple simply connected $K$-group $G'$ of the
\emph{same} type as $G$) that has the same $K$-isogeny classes of
maximal $K$-tori as $G$, is isomorphic to $G$.}
\end{thm}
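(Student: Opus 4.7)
My overall plan is to deduce the three assertions of Theorem~\ref{T:WC13} from the weak commensurability results of Section~\ref{S:Arithm}, by exploiting the existence of generic maximal $K$-tori over the number field $K$ (cf.~\cite{PR-Reg}).

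For part~(1), I would pick a generic maximal $K$-torus $T_1$ of $G_1$; the hypothesis provides a $K$-isogenous maximal $K$-torus $T_2$ of $G_2$. Since $T_1$ and $T_2$ are $K$-isogenous, their splitting fields coincide, $K_{T_1} = K_{T_2}$, and the rationalized character lattices are isomorphic as $\Ga(K_{T_1}/K)$-modules. Genericity of $T_1$ means that $\theta_{T_1}(\Ga(K_{T_1}/K))$ contains $W(G_1,T_1)$, so the Galois-invariant subset $\Phi(G_2,T_2)$ of the common rationalized character space is $W(G_1,T_1)$-invariant. The classification of $W$-invariant root subsystems forces $\Phi(G_2,T_2)$ to equal $\Phi(G_1,T_1)$ (up to scaling) or its dual --- the dual option arising only in the $\textsf{B}_n/\textsf{C}_n$ case --- yielding the first assertion of~(1). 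Under this identification the Weyl groups $W(G_i,T_i)$ coincide, as do the outer automorphism groups $\mathrm{Out}(\Phi(G_i,T_i))$ (note that $\mathrm{Out}$ is trivial in the $\textsf{B}_n/\textsf{C}_n$ case). Since $L_i$ is the fixed field of the kernel of the composition $\Ga(K_{T_i}/K)\to\mathrm{Aut}(\Phi(G_i,T_i))\to\mathrm{Out}(\Phi(G_i,T_i))$, and this map agrees on both sides under the identification, we obtain $L_1 = L_2$.

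For parts~(2) and~(3), I would set up weak commensurability of arithmetic subgroups. Given a semi-simple $\gamma_1 \in G_1(\cO_K(S))$ of infinite order lying in a maximal $K$-torus $T_1$, the hypothesis yields a $K$-isogeny $\pi\colon T_1\to T_2$ onto some maximal $K$-torus $T_2$ of $G_2$. For $S$ large enough to include the primes of bad reduction of $\pi$, a positive power $\gamma_2 := \pi(\gamma_1^m)$ lies in $T_2(\cO_K(S)) \subset G_2(\cO_K(S))$ and is of infinite order; via the pull-back $\pi^{*}\colon X(T_2)\to X(T_1)$ the eigenvalues of $\gamma_2$ are products of powers of those of $\gamma_1$, so the pair is weakly commensurable. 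Granting that a common $S$ can be chosen making $\Gamma_i = G_i(\cO_K(S))$ weakly commensurable, part~(2) follows from Theorem~\ref{T:WC5}: by~(1) the Killing-Cartan type of $G'$ is restricted to at most two possibilities, and the theorem furnishes finitely many $K$-isomorphism classes of forms for each type. Part~(3) follows from Theorem~\ref{T:WC4}: for types outside $\textsf{A}_n$, $\textsf{D}_{2n+1}$, $\textsf{E}_6$, weak commensurability of the arithmetic subgroups forces $K$-isomorphism of the adjoint forms, hence $G \simeq G'$ as simply connected groups.

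The main obstacle I anticipate is in the translation step for parts~(2)--(3): producing weakly commensurable arithmetic subgroups for a \emph{common} $S$, since the set of primes of bad reduction of each isogeny $\pi\colon T_1\to T_2$ depends a priori on $T_1$, while $\Gamma_1$ contains elements lying in infinitely many distinct maximal tori. Resolving this requires controlling the isogenies uniformly, for instance by exploiting finite generation of $\Gamma_1$ to reduce the weak commensurability condition to a generating set, together with the invariance of weak commensurability under commensurable replacements (Lemma~2.3 of~\cite{PR1}).
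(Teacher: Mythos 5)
Your overall strategy coincides with the paper's: Theorem \ref{T:WC13} is obtained there as a consequence of the weak commensurability results of \S\ref{S:Arithm} (it is \cite[Theorem 7.5]{PR1} together with \cite[Proposition 1.3]{GarR}), by noting that the hypothesis makes suitable $S$-arithmetic subgroups $\Gamma_i = G_i(\cO_K(S))$ weakly commensurable and then invoking Theorems \ref{T:WC4} and \ref{T:WC5}; your part (1), argued directly from one generic torus, is a mild variant that runs the same root-theoretic mechanism as the Isogeny Theorem \ref{T:Gen4}(2) and as Lemma \ref{L:5-1} (for $L_1=L_2$), and is fine provided you actually cite the key lemma from \cite[\S 4]{PR1} rather than assert "the classification of $W$-invariant root subsystems" -- this step is not formal (e.g.\ $\mathrm{Aut}(\Phi)$ may contain reflections not in $W(\Phi)$, already for $\textsf{A}_2$).

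The genuine problem is in your treatment of the translation step for parts (2)--(3). The obstacle you flag (a common $S$ absorbing the "primes of bad reduction" of all the isogenies $\pi\colon T_1\to T_2$) is not the real issue, and the fix you propose -- reducing weak commensurability to a generating set of $\Gamma_1$ via finite generation and \cite[Lemma 2.3]{PR1} -- cannot work: weak commensurability is a condition on \emph{every} semisimple element of infinite order, the eigenvalues of a word in generators bear no relation to those of the generators, and the cited lemma only says that the relation is insensitive to replacing a subgroup by a commensurable one, not that it can be tested on generators. The correct (and standard) argument fixes $S$ once and for all (containing $V^K_{\infty}$ and chosen so that both $G_i(\cO_K(S))$ are Zariski-dense and the triples are admissible; note that by Theorem \ref{T:WC3} the same $(K,S)$ must be used on both sides, as you do): for $\gamma_1\in\Gamma_1$ semisimple of infinite order lying in a maximal $K$-torus $T_1$ with a $K$-isogeny $\pi\colon T_1\to T_2\subset G_2$, all eigenvalues of $\gamma_1$ are $S$-units, hence all character values of $\delta:=\pi(\gamma_1)$ are $S$-units, so $\delta$ lies in the maximal bounded subgroup of $T_2(K_v)$ for every $v\notin S$; for almost all such $v$ this bounded subgroup lies in $G_2(\cO_v)$ (spreading out), and at the remaining finitely many $v$ it meets $G_2(\cO_v)$ in a subgroup of finite index, so some positive power $\delta^m$ lies in $\Gamma_2=G_2(\cO_K(S))$ and is weakly commensurable with $\gamma_1$. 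The exponent $m$ may depend on $\gamma_1$, which is harmless; no control of the reduction of $\pi$ and no uniformity over the (infinitely many) tori is needed. With this substitution your deduction of (2) from Theorem \ref{T:WC5} (combined with part (1) to bound the possible types) and of (3) from Theorem \ref{T:WC4} goes through, up to the routine care about choosing $S$ so that Zariski-density holds for all the groups involved.
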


The construction described in \cite[\S 9]{PR1} shows that the types
excluded in (3) are honest exceptions, i.e., for each of those types
one can construct non-isomorphic absolutely almost simple simply
connected $K$-groups $G_1$ and $G_2$ of this type over a number
field $K$ that have the same isomorphism classes of maximal
$K$-tori.

\vskip1mm

The situation where one of the groups is of type $\textsf{B}_n$ and
the other is of type $\textsf{C}_n$ with $n \geqslant 3$ was
analyzed in \cite{GarR}.
\begin{thm}\label{T:BC88}
{\rm (\cite[Theorem 1.4]{GarR})} Let $G_1$ and $G_2$ be absolutely
almost simple algebraic groups over a number field $K$ of types
$\textsf{B}_n$ and $\textsf{C}_n$ respectively for some $n \geqslant
3$.

\vskip1mm

\noindent {\rm (1)} \parbox[t]{15cm}{The groups $G_1$ and $G_2$ have
the same \emph{isogeny} classes of maximal $K$-tori if and only if
they are twins\footnotemark.}

\vskip1mm

\noindent {\rm (2)} \parbox[t]{15cm}{The groups $G_1$ and $G_2$ have
the same \emph{isomorphism} classes of maximal $K$-tori if and only
if they are twins, $G_1$ is adjoint and $G_2$ is simply connected.}
\end{thm}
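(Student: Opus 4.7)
The plan is to use weak commensurability as a bridge between $K$-isogeny classes of maximal tori and the twin condition, combining Theorem \ref{T:BC77} (the characterization of weakly commensurable $S$-arithmetic subgroups in types $\textsf{B}_n,\textsf{C}_n$ by twinness) with the Isogeny Theorem \ref{T:Gen4} and the abundance of generic semi-simple elements in Zariski-dense subgroups over number fields. The root systems $\textsf{B}_n$ and $\textsf{C}_n$ are dual and share the same Weyl group, so a matching of maximal tori between groups of the two types is possible on the level of root data; the twin hypothesis is exactly what is needed to realise such a matching globally over $K$. For the forward direction of (1), I would fix Zariski-dense $S$-arithmetic subgroups $\Gamma_i \subset G_i(K)$ and consider a generic semi-simple element $\gamma_1 \in \Gamma_1$ of infinite order lying in its (automatically $K$-irreducible) maximal $K$-torus $T_1$; by hypothesis some maximal $K$-torus $T_2 \subset G_2$ is $K$-isogenous to $T_1$, and a $K$-isogeny $\pi \colon T_2 \to T_1$ matches appropriate powers of eigenvalues, producing an element $\gamma_2 \in T_2(K)$ weakly commensurable with $\gamma_1$. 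A strong approximation argument then places such an element inside $\Gamma_2$ up to commensurability, showing that $\Gamma_1$ and $\Gamma_2$ are weakly commensurable. Theorem \ref{T:BC77} then forces $G_1$ and $G_2$ to be twins.

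For the reverse direction of (1), assume $G_1$ and $G_2$ are twins, and let $T_1 \subset G_1$ be an arbitrary maximal $K$-torus. The character module $X(T_1)$ is a $\Ga(\overline{K}/K)$-lattice sitting between the root lattice $Q(\textsf{B}_n)$ and the weight lattice $P(\textsf{B}_n)$, with Galois action factoring through a subgroup of $W(G_1,T_1)$ (since $\textsf{B}_n$ admits no outer automorphisms). Via the canonical identification of the coroots of $\textsf{B}_n$ with the roots of $\textsf{C}_n$, the cocharacter lattice of $T_1$ becomes a Galois module of $\textsf{C}_n$-type; dualising produces a candidate $K$-torus $T_2$ together with a canonical $K$-isogeny $T_2 \to T_1$. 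The remaining task is to realise $T_2$ as a maximal $K$-torus of $G_2$, an embedding problem governed by the local-global principles of \cite{Bayer}, \cite{Gar}, \cite{PR-invol}. At every nonarchimedean place $v$, $G_2$ is split (Kneser), so any $K_v$-torus with the appropriate root-data structure embeds as a maximal torus; at each archimedean $v$, the twin hypothesis ensures that $G_1$ and $G_2$ are simultaneously split or simultaneously anisotropic, and the embedding of $T_1 \otimes K_v$ into $G_1 \otimes K_v$ can be transported through the root-system duality to an embedding of $T_2 \otimes K_v$ into $G_2 \otimes K_v$. The Hasse principle then gives the required global embedding.

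Part (2) is obtained by upgrading $K$-isogeny to $K$-isomorphism throughout: this requires $X(T_1)$ and $X(T_2)$ to be isomorphic as Galois lattices, not merely commensurable. The possible character lattices of maximal tori here are $Q(\textsf{B}_n) = \Z^n$ (adjoint $\textsf{B}_n$), $P(\textsf{B}_n) = \Z^n + \Z \cdot \tfrac{1}{2}(1,\ldots,1)$ (simply connected $\textsf{B}_n$), $Q(\textsf{C}_n) \subset \Z^n$ of index $2$ (adjoint $\textsf{C}_n$), and $P(\textsf{C}_n) = \Z^n$ (simply connected $\textsf{C}_n$); among these only $Q(\textsf{B}_n)$ and $P(\textsf{C}_n)$ are isomorphic as Weyl-group lattices, the others being distinguished by the index in their ambient $\R^n$. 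Hence matching on isomorphism classes of tori forces $G_1$ to be adjoint and $G_2$ to be simply connected, and part (1) then supplies the twin condition. The converse uses the construction from the reverse direction of (1) in this particular case, where the integral character lattices on the two sides actually agree.

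The hard part will be the reverse direction of (1): the Isogeny Theorem \ref{T:Gen4} furnishes $K$-isogenous tori only in the \emph{generic} regime, so an arbitrary $K$-torus $T_1$, in particular a $K$-reducible one, must be handled by the Hasse principle argument outlined above. Verifying that the twin hypothesis alone suffices to annihilate the Galois-cohomological obstruction to globalising compatible local embeddings -- rather than the weaker condition that local $K_v$-ranks merely agree -- is where the specific nature of types $\textsf{B}_n$ and $\textsf{C}_n$ (triviality of outer automorphisms, duality of root systems, vanishing of $H^1$ over nonarchimedean local fields) is essential.
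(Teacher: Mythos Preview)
The paper does not prove Theorem~\ref{T:BC88}; it is quoted from \cite[Theorem~1.4]{GarR} without argument, so there is nothing in this article to compare your proposal against. That said, your outline is broadly on the right track, and the strategy of routing the forward direction of (1) through Theorem~\ref{T:BC77} is natural. However, there is a genuine gap in that step: weak commensurability (Definition~2.2(b)) requires that \emph{every} semi-simple $\gamma_1\in\Gamma_1$ of infinite order be matched, not only the generic ones, so handling only generic $\gamma_1$ does not yet put you in a position to invoke Theorem~\ref{T:BC77}. The repair is immediate, because the hypothesis of (1) supplies a $K$-isogeny for \emph{every} maximal $K$-torus $T_1$, generic or not---so drop the word ``generic'' and run the same argument. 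Also, ``strong approximation'' is not the mechanism that lands $\gamma_2$ in $\Gamma_2$; what you actually need is the standard fact that a $K$-isogeny between tori renders their $S$-arithmetic subgroups commensurable, so that some power of the image of $\gamma_1$ already lies in $T_2(\cO_K(S))\subset\Gamma_2$.

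For part~(2) your identification of the four lattices is correct, but ``distinguished by the index in their ambient $\R^n$'' is not a $W$-module invariant and does not by itself prove non-isomorphism. The clean argument is Schur's lemma: $W=\{\pm1\}^n\rtimes S_n$ acts irreducibly on $\Q^n$, so any $W$-equivariant map between two of these lattices (tensored with $\Q$) is multiplication by a rational scalar, and one checks directly that no scalar carries $P(\textsf{B}_n)$ or $Q(\textsf{C}_n)$ onto $\Z^n=Q(\textsf{B}_n)=P(\textsf{C}_n)$. Your sketch of the reverse direction of (1) via local embeddings and a Hasse principle is plausible; this is indeed where the substance lies, and the details (which local obstruction groups vanish, and why twinness rather than mere equality of local ranks is the correct hypothesis) are what \cite{GarR} works out.
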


\footnotetext{See the definition of twins prior to the statement of
Theorem \ref{T:BC77}.}

\vskip2mm

\addtocounter{thm}{1}

\noindent {\bf 4.6. Division algebras with the same maximal
subfields.} As we already mentioned, questions related to the
problem of characterizing absolutely almost simple algebraic
$K$-groups by the isomorphism/isogeny classes of their maximal
$K$-tori were in fact raised and investigated a long time ago,
particularly in the context of finite-dimensional central simple
algebras.
We recall that given a central simple algebra $A$ of degree $n$
(i.e., of dimension $n^2$) over a field $K$, a field extension $F/K$
is called a {\it splitting field} if $A \otimes_K F \simeq M_n(F)$
as $F$-algebras; furthermore, if $A$ is a division algebra then the
splitting fields of degree $n$ over $K$ are precisely the maximal
subfields of $A$. It is well-known that the splitting fields/maximal
subfields of a central simple $K$-algebra $A$ play a huge role in
the analysis of its structure (cf., for example, \cite{GiSz}), which
suggests the question: {\it to what extent do these fields
actually determine $A$?} The answer to this question in the
situation where one considers {\it all} splitting fields is given by
the famous theorem of Amitsur \cite{Ami}: {\it Let $A_1$ and $A_2$
be finite-dimensional central simple algebras over a field $K$.
Assume that a field extension $F/K$ splits $A_1$ if and only if it
splits $A_2$. Then the classes $[A_1]$ and $[A_2]$ in the Brauer
group $\Br(K)$ generate the same subgroup: $\langle [A_1] \rangle =
\langle [A_2] \rangle$} (the converse is obvious). The proof of
Amitsur's Theorem (cf.\,\cite{Ami},\,\cite[Ch. 5]{GiSz}) uses {\it
generic splitting fields} which are {\it infinite} extensions of
$K$. At the same time, it is important to point out that the
situation changes dramatically if instead of all splitting fields
one considers only finite-dimensional ones or just maximal
subfields.

\vskip2mm

\noindent {\bf Example 4.7.} Fix $r \geqslant 2$, and pick $r$
distinct primes $p_1, \ldots , p_r$. Let $\varepsilon =
(\varepsilon_1, \ldots , \varepsilon_r)$ be any $r$-tuple with
$\varepsilon_i = \pm 1$ such that  $\sum_{i = 1}^r \varepsilon_i
\equiv 0(\mathrm{mod} \: 3)$. By class field theory (the
Albert-Brauer-Hasse-Noether Theorem - to be referred to as (ABHN) in
the sequel, cf.\,\cite[Ch.\,VII, 9.6]{ANT}, \cite[18.4]{Pierce}, and
also \cite{Roq} for a historical perspective), corresponding to
$\varepsilon$, we have a central cubic division algebra
$D(\varepsilon)$ over $\Q$ with the following local invariants
(considered as elements of $\Q/\Z$):
$$ \mathrm{inv}_p \:
D(\varepsilon) \ = \ \left\{ \begin{array}{ccl} \displaystyle
\frac{\varepsilon_i}{3} & , & p = p_i \ \ \text{for} \ i = 1, \ldots
, r; \\  0 & , & p \notin \{p_1, \ldots , p_r\} \ \
(\text{including} \ p = \infty)
\end{array} \right.
$$
Then for any two $r$-tuples $\varepsilon'$ and $\varepsilon''$ as
above, the algebras $D(\varepsilon')$ and $D(\varepsilon'')$ have
the same finite-dimensional splitting fields, hence the same maximal
subfields (cf.\,\cite[18.4, Corollary b]{Pierce}), and are
non-isomorphic if $\varepsilon' \neq \varepsilon''$. Obviously, the
number of admissible $r$-tuples $\varepsilon$ grows with $r$, so
this method enables one to construct an {\it arbitrarily large} (but
finite) number of pairwise nonisomorphic cubic division algebras
over $\Q$ having the same maximal subfields.

\vskip2mm

A similar construction can be implemented for division algebras
of any degree $d > 2$. On the other hand, it follows from
(ABHN) that any two quaternion division algebras over a number field
$K$ with the same quadratic subfields are necessarily isomorphic.
This suggests the following question:

\vskip2mm

\begin{center}

$(*)$ \parbox[t]{14cm}{\it What can one say about two
finite-dimensional central division algebras $D_1$ and $D_2$ over a
field $K$ given the fact that they have the same (isomorphism
classes of) maximal subfields?}

\end{center}

\vskip2mm

(We say that central division $K$-algebras $D_1$ and $D_2$ have the
same isomorphism classes of maximal subfields if they have the same
degree $n$ and a degree $n$ field extension $F/K$ admits a
$K$-embedding $F \hookrightarrow D_1$ if and only if it admits a
$K$-embedding $F \hookrightarrow D_2$.)

\vskip1mm

It should be noted that  $(*)$ is closely related (although not
equivalent) to the question of understanding the relationship
between $D_1$ and $D_2$ when the groups $G_1 = \mathrm{SL}_{1 ,
D_1}$ and $G_2 = \mathrm{SL}_{1 , D_2}$ have the same isomorphism
classes of maximal $K$-tori, and we will comment on this a bit later
(cf.\,Theorem \ref{T:Finite2} and the discussion thereafter). Our
next immediate goal, however, is to present some recent results on
$(*)$, for which we need the following definition.

\vskip2mm

\addtocounter{thm}{2}

\noindent {\bf Definition 4.8.} Let $D$ be a central division
$K$-algebra of degree $n$. The {\it genus} $\gen(D)$ is the set of
all classes $[D'] \in \Br(K)$ represented by central division
$K$-algebras $D'$ having the same maximal subfields as $D$.

\vskip2mm

The following basic questions about the genus represent two aspects
of the general question $(*)$.

\vskip2mm

\noindent {\bf Question 1.} {\it When does $\gen(D)$ reduce to a
single class?}

\vskip1mm

(This is another way of asking whether $D$ is  determined uniquely up
to isomorphism by its maximal subfields.)

\vskip2mm

\noindent {\bf Question 2.} {\it When is $\gen(D)$ finite?}

\vskip2mm

Regarding Question 1, we note that $\vert \gen(D) \vert = 1$ is
possible only if $D$ has exponent 2 in the Brauer group. Indeed, the
opposite algebra $D^{\mathrm{op}}$ has the same maximal subfields as
$D$. So, unless $D \simeq D^{\mathrm{op}}$ (which is equivalent to
$D$ being of exponent 2), we have $\vert \gen(D) \vert > 1$. On the
other hand, as we already mentioned, it follows from (ABHN) that for
any quaternion algebra $D$ over a global field $K$ (and hence any central simple
$K$-algebra of exponent 2 over a global field is known to be
a quaternion algebra), $\gen(D)$ does reduce to
a single element. So, Question 1 really asks  about other fields
with this property. More specifically, we had asked earlier
if the field of rational functions $K = \Q(x)$ is such a
field. This question (in the context of quaternion algebras) was
answered in the affirmative by D.\,Saltman. Then, in \cite{GS},
Garibaldi and Saltman extended the result to  fields of the form $K
= k(x)$, where $k$ is any number field (and also to some other
situations). Recently, the following {\it Stability Theorem} was
proved in \cite{CRR2} for algebras of exponent 2 (the case of
quaternion algebras was considered earlier in \cite{RR}).
\begin{thm}\label{T:StabThm}
{\rm (\cite[Theorem 3.5]{CRR2})} Let $k$ be a field of
characteristic $\neq 2$. If $\vert \gen(D) \vert = 1$ for any
central division $k$-algebra $D$ of exponent 2 then the same
property holds for any central division algebra of exponent 2 over
the field of rational functions $k(x)$.
\end{thm}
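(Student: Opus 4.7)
The plan is to analyze the Faddeev exact sequence
\begin{equation*}
0 \longrightarrow \Br(k)[2] \longrightarrow \Br(K)[2] \stackrel{\bigoplus_v \partial_v}{\longrightarrow} \bigoplus_{v \in (\mathbb{P}^1_k)^{(1)}} \kappa(v)^{\times}/(\kappa(v)^{\times})^2 \longrightarrow 0
\end{equation*}
for $K = k(x)$ (restricted to the $2$-torsion, where $v$ runs over the closed points of $\mathbb{P}^1_k$) in order to reduce the genus problem over $K$ to the genus problem over $k$. Let $D_1$ and $D_2$ be central division $K$-algebras of exponent $2$ with $\gen(D_1) = \gen(D_2)$; since exponent $2$ forces the index, and hence the common degree $n = \deg D_i$, to be a power of $2$, write $n = 2^r$. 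The strategy has three steps: (i) force $\partial_v(D_1) = \partial_v(D_2)$ at every closed point $v$, so that $[D_1] - [D_2]$ becomes a constant class $[A_0]_K$ with $[A_0] \in \Br(k)[2]$; (ii) specialize at a generic $k$-point to descend the problem to $k$; (iii) invoke the hypothesis on $k$ to conclude $[A_0] = 0$.

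The main obstacle is step (i), the residue comparison. Suppose, for contradiction, that $\partial_{v_0}(D_1) \neq \partial_{v_0}(D_2)$ at some $v_0$. By Merkurjev's theorem each $D_i$ is Brauer-equivalent to a sum of quaternion symbols, which makes the residue calculation explicit. Weak approximation applied to the finite set of discrete valuations of $K$ consisting of $v_0$ together with the places of ramification of $D_1$ and $D_2$ produces elements $g_1, \ldots, g_r \in K^\times$ whose valuations and residues modulo squares at these places can be independently prescribed. One arranges the $g_i$ so that the multiquadratic extension $E = K(\sqrt{g_1}, \ldots, \sqrt{g_r})$ of degree $2^r$ satisfies $[D_1]_E = 0$ (each residue of $D_1$ is trivialized after restriction to $E$) while $[D_2]_E \neq 0$ (the distinct residue of $D_2$ at $v_0$ survives the pullback to the residue extension $\kappa(w)/\kappa(v_0)$ for $w \mid v_0$ in $E$). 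Thus $E$ is a maximal subfield of $D_1$ but does not embed in $D_2$, contradicting $\gen(D_1) = \gen(D_2)$. Consequently the residues agree at every $v$, and $[D_1] - [D_2] = [A_0]_K$ for a uniquely determined $[A_0] \in \Br(k)[2]$.

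To finish step (iii), we must show $[A_0] = 0$. If $k$ is finite, $\Br(k) = 0$ and we are done. Otherwise, fix $a \in k$ outside the ramification locus of $D_1$ and $D_2$, so that the specializations $D_1(a), D_2(a)$ are well-defined central simple $k$-algebras of degree $n$ with $[D_1(a)] = [D_2(a)] + [A_0]$ in $\Br(k)$. The key claim is that $\gen(D_1(a)) = \gen(D_2(a))$ over $k$. Given a maximal subfield $F/k$ of $D_1(a)$, a Hilbert-irreducibility/Bertini-type argument, valid for $a$ in a suitable Zariski-dense set of $\A^1_k(k)$, lifts $F$ to a maximal separable subfield $\tilde F$ of $D_1$ over $K$ whose specialization at $a$ is $F$; by $\gen(D_1) = \gen(D_2)$, $\tilde F$ embeds in $D_2$, and specialization at $a$ yields $F \hookrightarrow D_2(a)$. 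By symmetry $\gen(D_1(a)) = \gen(D_2(a))$, so the hypothesis on $k$ yields $D_1(a) \simeq D_2(a)$, forcing $[A_0] = 0$ and hence $D_1 \simeq D_2$. The difficulty concentrates in step (i), where one needs the combinatorial orchestration of residues through an $r$-fold quadratic tower — extending the quaternion ($r=1$) argument of \cite{RR} — while step (iii) relies on a careful but standard lifting of maximal subfields under specialization.
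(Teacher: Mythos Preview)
The present paper does not prove this theorem; it is quoted from \cite{CRR2} (the quaternion case having been treated earlier in \cite{RR}), so there is no in-paper argument to compare against. Your overall architecture---use Faddeev's exact sequence for $\Br(k(x))[2]$, show that $D_1$ and $D_2$ have identical residues, deduce $[D_1][D_2]^{-1}=[A_0]_K$ for some $[A_0]\in\Br(k)[2]$, and then eliminate $[A_0]$ via the hypothesis on $k$---is indeed the strategy of the cited reference.

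There are, however, genuine gaps in your execution. In step~(i) you assert that once the $g_j$ are chosen so that every residue of $D_1$ dies in $E=K(\sqrt{g_1},\dots,\sqrt{g_r})$ one has $[D_1]_E=0$; but vanishing of residues only forces $[D_1]_E$ into the unramified part of $\Br(E)$, and the constant contribution coming from $\Br(k)$ need not vanish, so $E$ need not split $D_1$. More seriously, step~(iii) rests on the claim that for generic $a$ every maximal subfield $F$ of the specialization $D_1(a)$ lifts to a maximal subfield $\tilde F\subset D_1$ over $K$ specializing to $F$ at $a$. No Hilbert-irreducibility or Bertini statement yields this: you are demanding a degree-$n$ extension of $K$ with a prescribed fibre at $a$ \emph{and} the global property of splitting $D_1$, and you offer no mechanism to reconcile these constraints (nor is it clear that the Zariski-dense set of good $a$'s can be chosen uniformly in $F$). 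In \cite{CRR2} the passage from $[A_0]_K$ to $[A_0]=0$ avoids this device: one works instead with a single well-chosen maximal subfield $P$ of $D_1$ (hence also of $D_2$, by the genus hypothesis) for which the map $\Br(k)\to\Br(P)$ can be shown to be injective, so that $[A_0]_P=0$ forces $[A_0]=0$ directly.
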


\vskip2mm

\begin{cor}\label{C:1}
If $k$ is either a number field or a finite field of $\mathrm{char}
\neq 2$, and $K = k(x_1, \ldots , x_r)$ is a purely transcendental
extension then for any central division $K$-algebra $D$ of exponent
2 we have $\vert \gen(D) \vert = 1$.
\end{cor}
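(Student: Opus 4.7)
The plan is a short induction on the number of variables $r$, using the Stability Theorem as the inductive step.

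For the base case $r = 0$, I need to verify that $|\gen(D)| = 1$ for every central division algebra $D$ of exponent $2$ over the ground field $k$ itself. If $k$ is finite, then by Wedderburn's theorem there are no noncommutative finite-dimensional central division algebras over $k$, so the only $D$ to consider is $k$, and the claim is trivial. If $k$ is a number field, then every central simple $k$-algebra of exponent $2$ is (as noted in the paragraph preceding Theorem \ref{T:StabThm}) a quaternion algebra, and the Albert--Brauer--Hasse--Noether theorem implies that any two quaternion division algebras over $k$ with the same quadratic subfields are isomorphic; equivalently, $|\gen(D)| = 1$.

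For the inductive step, assume that the property ``$|\gen(D')| = 1$ for every central division algebra $D'$ of exponent $2$'' holds over the field $k_{r-1} := k(x_1, \ldots, x_{r-1})$. Since $\mathrm{char}\: k \neq 2$, the same is true of $k_{r-1}$, so Theorem \ref{T:StabThm} applies to $k_{r-1}$ and yields the corresponding property over $k_{r-1}(x_r) = k(x_1, \ldots, x_r) = K$. This completes the induction.

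There is no real obstacle; the only ingredients are (i) the classification of central division algebras of exponent $2$ over the base field (Wedderburn, respectively ABHN), and (ii) the Stability Theorem just established. The corollary is a direct iterated application of the latter, anchored by the former.
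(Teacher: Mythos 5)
Your proof is correct and is exactly the argument the paper intends: iterate the Stability Theorem, anchored in the base case by Wedderburn's theorem (finite fields) or by ABHN together with the fact that an exponent-2 algebra over a global field is quaternion (number fields). The paper gives no separate proof of the corollary precisely because this induction is immediate.
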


\vskip2mm

Furthermore, if $k$ is a field of $\mathrm{char} \neq 2$ such that
${}_2\Br(k) = 0$, then the field of rational functions $K = k(x_1,
\ldots , x_r)$ again satisfies the property described in Theorem
\ref{T:StabThm}. The existence of various examples in which the
genus of a division algebra of exponent 2 always reduces to one
element naturally leads to the question of {\it whether  the genus
of a quaternion algebra can \emph{ever} be nontrivial}. The answer
is `yes,' and the following construction of examples (described in
\cite[\S 2]{GS}) was offered by several people including Wadsworth,
Shacher, Rost, Saltman, Garibaldi ... We will describe only the
basic idea referring to \cite{GS} for the
details.

We start with two nonisomorphic quaternion division algebras $D_1$
and $D_2$ over a field $k$ of $\mathrm{char} \neq 2$ that have a
common quadratic subfield (e.g., one can take $k = \Q$ and
$\displaystyle D_1 = \left( \frac{-1 , 3}{\Q}\right)$ and
$\displaystyle D_2 = \left( \frac{-1 , 7}{\Q}\right)$). If $D_1$ and
$D_2$ already have the same quadratic subfields, we are done.
Otherwise, there exists a quadratic extension $k(\sqrt{d})$ that
embeds into $D_1$ but not into $D_2$. Then, using either properties
of quadratic forms or the ``index reduction formulas," one shows
that there exists an extension $k^{(1)}$ of $k$ (which is the field
of rational functions on a certain quadric) such that

\vskip2mm

\noindent $\bullet$ $k^{(1)}\otimes_k D_1$ and  $k^{(1)}\otimes_k D_2$ are
non-isomorphic division algebras over $k^{(1)}$, \ {\bf
but}

\vskip1mm

\noindent $\bullet$ $k^{(1)}(\sqrt{d})$ embeds into $k^{(1)}\otimes_k D_2$.

\vskip2mm

\noindent One deals with other subfields (in the algebras obtained
from $D_1$ and $D_2$ by applying the extension of scalars built at
the previous step of the construction), one at a time, in a similar
fashion. This process generates an ascending chain of fields
$$
k^{(1)} \subset k^{(2)} \subset k^{(3)} \subset \cdots ,
$$
and we let $K$ be the union (direct limit) of this chain. Then $K\otimes_k D_1$ and
$K\otimes_k D_2$ are non-isomorphic quaternion
division $K$-algebras having the same quadratic subfields; in
particular $\vert \gen(D_1 \otimes_k K) \vert > 1$. Note that the
resulting field $K$ has infinite transcendence degree over $k$,
hence is infinitely generated. Furthermore, some adaptation of the
above construction (cf.\,\cite{Meyer}) enables one to start with an
infinite sequence $D_1, D_2, D_3, \ldots$ of division algebras over
a field $k$ of characteristic $\neq 2$ that are pairwise
non-isomorphic but share a common quadratic subfield (e.g., one can
take $k = \Q$ and consider the family of algebras of the form
$\displaystyle \left(\frac{-1 , p}{\Q} \right)$ where $p$ is a prime
$\equiv 3(\mathrm{mod} \: 4)$), and then build an infinitely
generated field extension $K/k$ such that the algebras $D_i
\otimes_k K$ become pairwise non-isomorphic division algebras with
any two of them having the same quadratic subfields. This makes the
genus $\gen(D_1 \otimes_k K)$ infinite, and therefore brings us to
Question 2 of when one can guarantee the finiteness of the genus.
Here we have the following finiteness result.
\begin{thm}\label{T:Finite1}
{\rm (\cite[Theorem 3]{CRR1})} Let $K$ be a finitely generated
field. If $D$ is a central division $K$-algebra of exponent prime to
$\mathrm{char} \: K$, then $\gen(D)$ is finite.
\end{thm}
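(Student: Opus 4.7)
My overall strategy is to control the ramification of every algebra in $\gen(D)$ and then to invoke a finiteness theorem for Brauer classes with restricted ramification over finitely generated fields. Set $n = \deg D$, so the exponent of $D$ divides $n$ and by hypothesis $\gcd(n, \mathrm{char}\,K) = 1$. Since $K$ is finitely generated, I would fix a normal integral model $\mathfrak{X}$ of $K$ of finite type over $\Z$ (or over $\F_p$) and let $V$ be the set of those codimension-one points of $\mathfrak{X}$ whose residue characteristic is coprime to $n$. For each $v \in V$ the standard residue map $\rho_v \colon \Br(K)[n] \to H^1(k(v), \Z/n\Z)$ is defined. Let $V_0 \subset V$ be the finite set where $D$ is ramified, and for $v \in V_0$ set $\chi_v := \rho_v(D)$, a character of order $e_v \mid n$ cutting out a cyclic extension $E_v/k(v)$ of degree $e_v$.

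The central ingredient is a \emph{common-residue lemma}: for every $D' \in \gen(D)$ and every $v \in V$, the cyclic subgroups $\langle \rho_v(D)\rangle$ and $\langle \rho_v(D')\rangle$ of $H^1(k(v),\Z/n\Z)$ coincide; in particular $D'$ is unramified outside $V_0$. To prove it, I would lift $E_v$ to an unramified extension $\widetilde{E}_v/K_v$, and construct a degree-$n$ cyclic extension $L/K$ that embeds as a maximal subfield of $D$ and whose $v$-localization $L \otimes_K K_v$ is a field whose maximal unramified subextension over $K_v$ is precisely $\widetilde{E}_v$. Existence of such $L$ would follow from prescribing the local behaviour of a maximal subfield of $D$ at the finitely many places of $V_0$, using weak approximation and local realisation of cyclic algebras. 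Since $D' \in \gen(D)$, the same $L$ embeds as a maximal subfield of $D'$, so $L \otimes_K K_v$ splits $D' \otimes_K K_v$; computing the residue of a locally split class then forces $\rho_v(D')|_{\widetilde{E}_v} = 0$, hence $\rho_v(D') \in \langle \chi_v\rangle$. Swapping the roles of $D$ and $D'$ gives the reverse inclusion.

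Granted the lemma, the residue assignment
$$
\gen(D) \; \longrightarrow \; \prod_{v \in V_0}\langle \chi_v\rangle, \qquad [D'] \longmapsto \bigl(\rho_v(D')\bigr)_{v \in V_0},
$$
is well-defined, and its fibres sit inside the kernel of the total residue map $\Br(K)[n] \to \prod_{v \in V} H^1(k(v), \Z/n\Z)$. The target of the displayed map is plainly finite. The fibre-controlling kernel is the $n$-torsion of the unramified Brauer group of $K$ relative to $V$, and it is finite because $K$ is finitely generated: this is standard via Gabber/Bloch--Ogus purity applied to $\mathfrak{X}$ (controlling $\he^{2}(\mathfrak{X},\mu_{n})$), or by induction on the transcendence degree using Faddeev's exact sequence applied to function fields of successive fibrations. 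Combining these two finiteness statements yields $|\gen(D)| < \infty$.

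The hard part is the common-residue lemma. Over a number field it reduces quickly to (ABHN) plus Grunwald--Wang, but in the general finitely generated setting, producing a global maximal subfield of $D$ that realises a prescribed ramification character at each $v \in V_0$ requires a careful moving-type construction on the model $\mathfrak{X}$ together with a Bertini-style argument to keep the constructed cover irreducible; this is the technical heart of \cite{CRR1}, and it is also the step that uses the hypothesis on the exponent in an essential way, since outside $n$-torsion the residue formalism breaks down in characteristic dividing $n$. Once ramification is pinned down, the finiteness conclusion is a formal consequence.
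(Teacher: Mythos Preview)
The paper does not prove this theorem; it is quoted from \cite{CRR1} (with the detailed argument in \cite{CRR2}), so the relevant comparison is with that reference. Your two-step architecture---(i) a common-residue lemma forcing every $D'\in\gen(D)$ to have residues generating the same cyclic subgroup as those of $D$ at every $v\in V$, hence to be unramified outside the finite ramification locus $V_0$ of $D$, followed by (ii) finiteness of the $n$-torsion of the unramified Brauer group of a finitely generated field---is precisely the strategy of \cite{CRR1}, \cite{CRR2}, and you are right that step (i) is the technical heart.

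However, the specific construction you sketch for the common-residue lemma does not deliver the stated conclusion. If $L_v$ has maximal unramified part $\widetilde{E}_v$, its ramification index over $K_v$ is $e=n/e_v$, and the compatibility of residue maps with base change gives only
\[
e\cdot\bigl(\chi'_v|_{E_v}\bigr)=0\quad\text{in }H^1(E_v,\Z/n\Z),\qquad \chi'_v:=\rho_v(D'),
\]
which says merely that $\chi'_v|_{E_v}$ has order dividing $e_v$, not that it vanishes; so one cannot conclude $\chi'_v\in\langle\chi_v\rangle$. (In the extreme case where $D$ is unramified at $v$, your $L_v$ is totally ramified of degree $n$, and the condition becomes $n\cdot\chi'_v=0$, which is vacuous.) What is actually needed is a maximal subfield $L$ of $D$ that is \emph{unramified} at $v$ with residue extension $\ell(v)\supset E_v$ chosen so that $\chi'_v|_{\ell(v)}\neq 0$ whenever $\chi'_v\notin\langle\chi_v\rangle$; producing such an $L$ globally with prescribed unramified local behaviour at the places of $V_0$ is exactly the construction carried out in \cite{CRR2}. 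Also, insisting that $L/K$ be cyclic is unnecessary and in general an obstruction: the argument in \cite{CRR2} builds a general separable degree-$n$ maximal subfield, not a cyclic one.
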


One of the questions about the genus of a division that remains open
after Theorems \ref{T:StabThm} and \ref{T:Finite1} is whether one
can find a quaternion division algebra {\it over a finitely
generated field} of characteristic $\neq 2$ with {\it nontrivial
genus}.

\vskip2mm

\noindent {\bf 4.12. The genus of an algebraic group.} We will now
discuss a possible generalization of the concept of the genus from
finite-dimensional central division algebras to arbitrary
absolutely almost simple algebraic groups obtained by replacing maximal
subfields with maximal tori.

So, let $G$ be an absolutely almost simple (simply connected or
adjoint) algebraic group over a field $K$. We define the genus
$\gen(G)$ to be the set of $K$-isomorphism classes of $K$-forms $G'$
of $G$ that have the same isomorphism classes of maximal $K$-tori as
$G$. Two remarks are in order. First, if $D$ is a finite-dimensional
central division algebra over a field $K$ and $G = SL_{1 , D}$ is
the corresponding group defined by elements of norm 1 in $D$, then
only maximal {\it separable} subfields of $D$ correspond to the
maximal $K$-tori of $G$. So, to avoid at least the obvious
discrepancies between the definitions of $\gen(D)$ and $\gen(G)$,
one should probably define the former in terms of {\it maximal
separable} subfields. We don't know however whether the definitions
of $\gen(D)$ in terms of all and only separable maximal subfields
would actually be distinct (these problems do not arise in Theorems
\ref{T:StabThm} and \ref{T:Finite1} as these treat only division
algebras whose degree is  prime to the characteristic of the
center). Second, one can give several alternative definitions of
$\gen(G)$ by working only with maximal {\it generic} $K$-tori, and
on the other hand by replacing $K$-isomorphisms of tori with
$K$-isogenies. It would be interesting to determine the precise
relationship between the various definitions; at this point, we will
just mention without  further elaboration that the definitions given
in terms of generic tori and $K$-isomorphism vs. $K$-isogeny {\it in
practice} lead to basically the same qualitative results (for the
reasons contained in Theorem \ref{T:Gen4} and the subsequent
discussion).

\vskip1mm

Building on Theorem \ref{T:Finite1}, we would like to propose the
following conjecture.

\vskip2mm

\addtocounter{thm}{2}

\noindent {\bf Conjecture 4.13.} {\it Let $G$ be an absolutely
almost simple (simply connected or adjoint) algebraic group over a
finitely generated field $K$. Assume that the characteristic of $K$
is either zero or is `good' for $G$. Then the genus $\gen(D)$ is
finite.}

\vskip2mm

The `bad' characteristics for each type are expected to be the
following:

\vskip1mm

$\bullet$ $\textsf{A}_{\ell}$\ --\ all prime divisors $p$ of $(\ell
+ 1)$, \ ${}^2\textsf{A}_{\ell}$\ --\ same primes and also $p = 2$;

\vskip.5mm

$\bullet$ $\textsf{B}_{\ell}$, $\textsf{C}_{\ell}$,
$\textsf{D}_{\ell}$\ --\ $p = 2$ (and possibly $p = 3$ for
${}^{3,6}\textsf{D}_4$)

\vskip.5mm

$\bullet$ $\textsf{E}_6, \textsf{E}_7, \textsf{E}_8, \textsf{F}_4$ and $\textsf{G}_2$\ --\ all prime
divisors of the order of the Weyl group.

\vskip1.5mm

\noindent The Isogeny Theorem \ref{T:Gen4} establishes some
connections between Conjectures 2.6 and 4.13, but more importantly,
we anticipate that the methods developed to deal with Conjecture
4.13 will be useful also in analyzing Conjecture 2.6 (in fact both
conjectures will be consolidated in \S \ref{S:Finite} into a single
conjecture - see Conjecture 5.4). Now, Theorem \ref{T:WC13}
confirms the conjecture in the situation where $K$ is a number
field. For general fields, the conjecture is known at this point
only for inner forms of type $\textsf{A}_{\ell}$.
\begin{thm}\label{T:Finite2}
{\rm (\cite[Theorem 5.3]{CRR2})} Let $G$ be a simply connected inner
form of type $\textsf{A}_{\ell}$ over a finitely generated field
$K$, and assume that the characteristic of $K$ is either zero or
does not divide $(\ell + 1)$. Then $\gen(G)$ is finite.
\end{thm}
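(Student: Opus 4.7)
The plan is to reduce the theorem to the finiteness of $\gen(D)$ for a division algebra, which is provided by Theorem \ref{T:Finite1}. Since $G$ is a simply connected inner form of type $\textsf{A}_\ell$, we have $G \simeq \mathrm{SL}_{m, D}$ for a central division $K$-algebra $D$ of degree $d$ with $md = n := \ell + 1$. The hypothesis $\mathrm{char}\, K \nmid (\ell+1)$ implies $\mathrm{char}\, K \nmid d$, so Theorem \ref{T:Finite1} guarantees that $\gen(D)$ is finite. The aim is to show that every $G' \in \gen(G)$ is of the form $\mathrm{SL}_{m, D'}$ with $[D'] \in \gen(D)$, whence the finiteness of $\gen(G)$ is immediate.

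The first step is to show that any $G' \in \gen(G)$ is also an inner form of type $\textsf{A}_\ell$, so that $G' \simeq \mathrm{SL}_{m', D'}$ with $m'd' = n$. Using the existence of generic maximal $K$-tori over the finitely generated base $K$ (cf.\,\cite{PR-Reg}), I would pick a maximal $K$-torus $T \subset G$ whose image $\theta_T(\Ga(K_T/K))$ equals the Weyl group $W(G,T) \simeq S_n$. By the Isogeny Theorem \ref{T:Gen4}(2), a $K$-isomorphic copy of $T$ must sit in $G'$ as a maximal $K$-torus; if $G'$ were an outer form, any generic maximal torus of $G'$ would have Galois image properly containing $W(G')$ (involving the diagram automorphism of $\textsf{A}_\ell$), and swapping the roles of $G$ and $G'$ produces a contradiction.

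The second step is to show $m = m'$ (whence $d = d'$) and $[D'] \in \gen(D)$. Maximal $K$-tori of $\mathrm{SL}_{r, \Delta}$ correspond bijectively to maximal commutative étale $K$-subalgebras $E \subset M_r(\Delta)$ up to conjugation, and $E$ decomposes as a product of fields $F_1 \times \cdots \times F_s$ of total dimension $n$ over $K$, subject to index-reduction compatibility with $\Delta$. Two extreme families are most informative: the anisotropic tori $R^{(1)}_{F/K}(\mathbb{G}_m)$ for single fields $F$ of degree $n$ (embedding if and only if $F$ splits $\Delta$), and the ``diagonal'' tori built from $r$ copies of a maximal subfield of $\Delta$. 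Matching the latter across $G$ and $G'$ shows that the sets of maximal subfields of $D$ and $D'$ coincide, forcing $d = d'$ (hence $m = m'$) and placing $[D'] \in \gen(D)$.

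The main obstacle is the matching in the second step: one must show that the étale-algebra structure of maximal $K$-tori encodes the Wedderburn decomposition of the ambient matrix algebra rather than merely its degree. A priori, distinct pairs $(m, D)$ and $(m', D')$ with $md = m'd' = n$ could conceivably produce the same family of maximal étale subalgebras. Ruling this out over a finitely generated field requires a careful analysis combining index-reduction formulas for $D$ with the abundance of generic field extensions (again via \cite{PR-Reg}) that can be embedded as maximal subfields. Once this is done, Theorem \ref{T:Finite1} closes the argument, since each $[D'] \in \gen(D)$ yields a unique $K$-isomorphism class $\mathrm{SL}_{m, D'}$.
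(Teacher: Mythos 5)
Your overall plan—write $G \simeq \mathrm{SL}_{m,D}$ and reduce finiteness of $\gen(G)$ to finiteness of $\gen(D)$, the latter supplied by Theorem \ref{T:Finite1}—is the right target, and the paper itself indicates that the actual proof in \cite{CRR2} passes through generic tori. But there is a genuine gap in your second step, and it is exactly the one the paper flags immediately after the theorem statement: \emph{the existence of a $K$-isomorphism between two maximal $K$-tori does not a priori give a $K$-algebra isomorphism between the corresponding \'etale subalgebras.} Your argument that ``matching the diagonal tori $L^m \hookrightarrow M_m(D)$ across $G$ and $G'$ shows that the maximal subfields of $D$ and $D'$ coincide'' tacitly assumes that if such a torus $T$ is $K$-isomorphic to a torus $T'$ of $G'$, then the \'etale algebra $E' \subset M_{m'}(D')$ attached to $T'$ is again $L^{m'}$ for the same field $L$. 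That does not follow: $T'$ could correspond to a single degree-$n$ field, or to a product of entirely different fields, with the same character lattice as a Galois module. This is precisely why the paper stresses that Theorem \ref{T:Finite2} is \emph{not} an automatic consequence of Theorem \ref{T:Finite1}, and that the actual proof has to work with \emph{generic} maximal tori, for which a $K$-isomorphism can (after an appropriate rational scaling coming from the Isogeny Theorem \ref{T:Gen4}) be promoted to an isomorphism of the \'etale algebras themselves.

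Your closing paragraph gestures in the right direction by invoking generic field extensions and \cite{PR-Reg}, but you frame the difficulty as distinguishing the Wedderburn types $(m,D)$ versus $(m',D')$ from the collection of \'etale subalgebras, which is a different (and downstream) concern. The missing lemma is the torus-to-algebra promotion for generic tori; once you have it, a generic degree-$n$ field extension $F$ splitting $D$ (and hence giving an anisotropic generic maximal torus of $G$) must also split $D'$, and running this both ways pins down the splitting behavior of $D'$ enough to land it in a finite set. Without that lemma, the ``diagonal tori'' matching in your step two does not go through, and the reduction to $\gen(D)$ is unsupported.
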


The group $G$ in this theorem is of the form $\mathrm{SL}_{1 , A}$
for some central simple $K$-algebra $A$ of dimension $n^2$. While
every maximal $K$-torus of $G$ corresponds to some $n$-dimensional
commutative \'etale subalgebra of $A$ (and the same is true for any inner
$K$-form $G'$ of $G$), the existence of a $K$-isomorphism between
the tori a priori may not imply the existence of an isomorphism
between the \'etale algebras (it would be interesting to construct
such examples!). For this reason, Theorem \ref{T:Finite2} is {\it
not} an automatic consequence of Theorem \ref{T:Finite1}. The proof
of Theorem \ref{T:Finite2} uses generic tori, the isomorphisms
between which after appropriate scaling do extend to an isomorphism
between the \'etale algebras.

\vskip1mm

We mention in passing that there are other interesting approaches to
the definition of the genus. For example, Krashen and McKinnie
\cite{KMcK} defined the genus $\gen'(D)$ of a central division
$K$-algebra $D$ of prime degree based on all finite-dimensional
splitting fields. Furthermore, Merkurjev proposed to define the {\it
motivic genus} $\gen_m(G)$ of an absolutely almost simple algebraic
$K$-group $G$ along the lines suggested by Amitsur's Theorem, viz.
as the set of $K$-isomorphism classes of $K$-forms $G'$ such that
for {\it any} field extension $F/K$ the groups $G$ and $G'$ have the
same $F$-isomorphism classes of maximal $F$-tori. Since this concept
is less related to  weak commensurability, we will not discuss it
here referring the reader to \cite[Remark 5.6]{CRR2} for the details
(including an explanation of the term ``motivic").

\vskip5mm

\section{A finiteness result}\label{S:Finite}

The goal of this section is to try to establish a more direct
connection between the Finiteness Conjectures 2.6 and 4.13: while
such a connection undoubtedly exists, it has
manifested itself so far primarily through the fact that the
techniques developed for one of them are typically also useful for
the other, and not through any formal implications. We begin with a
new finiteness result over number fields which
implies the truth of {\it both} conjectures in this situation. We
then formulate and discuss a conjecture  which says that a similar statement
should be true over general fields (with some restrictions on the
characteristic).

\begin{thm}\label{T:Finite-NF}
Let $G$ be an absolutely almost simple
algebraic group over a number field $K$, and let $\Gamma$ be a finitely generated
Zariski-dense subgroup of $G(K)$ with  trace field $K$. Denote by $\gen(G ,
\Gamma)$  the set of isomorphism classes of $K$-forms $G'$ of $G$
having the following property: any \emph{generic} maximal $K$-torus
$T$ of $G$ that contains an element of $\Gamma$ of infinite order is isogenous to
some maximal $K$-torus $T'$ of $G'$. Then $\gen(G , \Gamma)$ is
finite.
\end{thm}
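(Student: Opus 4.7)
The plan is to combine the Isogeny Theorem \ref{T:Gen4} with the Borel--Serre finiteness theorem for Galois cohomology of semisimple groups over number fields, using the abundance of generic elements in $\Gamma$ to transfer information from $G$ to any candidate $G'$.

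First I would exploit that a finitely generated Zariski-dense subgroup of $G(K)$ contains a Zariski-dense set of generic elements of infinite order (cf.\,\cite{PR-Reg} and \cite[\S 9]{PR-Gen}). Fixing one such $\gamma \in \Gamma$ with generic maximal $K$-torus $T = Z_G(\gamma)^{\circ}$, the defining property of $\gen(G, \Gamma)$ produces, for any $G' \in \gen(G, \Gamma)$, a maximal $K$-torus $T'$ of $G'$ that is $K$-isogenous to $T$. Applying Theorem \ref{T:Gen4} one deduces that $G$ and $G'$ are of the same Killing-Cartan type (the alternative that one is of type $\textsf{B}_n$ and the other of type $\textsf{C}_n$ with $n \geqslant 3$ is controlled by Theorem \ref{T:BC88}), and that the minimal Galois extensions of $K$ over which $G$ and $G'$ become inner forms of split groups coincide, since both are contained in the common splitting field $K_T = K_{T'}$ of the isogenous tori, and genericity of $T$ forces equality. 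Hence every $G' \in \gen(G, \Gamma)$ is an inner $K$-twist of a single quasi-split $K$-group $G_0$ of the fixed Killing-Cartan type with fixed $\ast$-action, so is classified by a cohomology class in $H^1(K, G_0^{\mathrm{ad}})$.

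Next I would localize. Choose a $K$-embedding $G \subset \mathrm{GL}_N$ so that $\Gamma \subset G(\cO_K(S_0))$ for some finite $S_0$, and enlarge it to $S = S_0 \cup V_{\infty}^K \cup \{v : G_{K_v} \text{ is not quasi-split or has bad reduction}\}$. The central technical step is to show that for every $v \notin S$, the local group $G'_{K_v}$ is quasi-split. The strategy is to produce, for each such $v$, a generic element $\gamma_v \in \Gamma$ whose centralizer $K$-torus is $K_v$-split (or at least sufficiently split that an isogenous maximal $K_v$-torus can live only in the quasi-split $K_v$-form of $G_0$). Such $\gamma_v$ should exist because the conditions of genericity and of prescribed $v$-adic behavior are both open and jointly realizable along the Zariski-dense set of generic elements of $\Gamma$ guaranteed by \cite{PR-Reg}; the common splitting field $K_T$ then forces the cohomology class defining $G'$ to be locally trivial at $v$.

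With the local class of $G'$ pinned down outside $S$, and only finitely many possibilities at each $v \in S$ (since $H^1(K_v, G_0^{\mathrm{ad}})$ is finite at every such place), the classical Borel--Serre finiteness theorem bounds the number of admissible classes in $H^1(K, G_0^{\mathrm{ad}})$, yielding finiteness of $\gen(G, \Gamma)$. The main obstacle will be the middle step, namely establishing sufficient \emph{local mobility} of generic elements in the non-arithmetic finitely generated subgroup $\Gamma$ to force local quasi-splitness of $G'$ at every $v \notin S$ — this is where the analysis of the Zariski-dense and $v$-adic open conditions on the variety of maximal $K$-tori of $G$ must be carried out with care. Once this local-mobility lemma is in hand, the cohomological finiteness argument is essentially standard.
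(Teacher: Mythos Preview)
Your outline matches the paper's proof almost exactly: first pin down that every $G'\in\gen(G,\Gamma)$ is an inner twist of a fixed quasi-split $G_0$, then show $G'$ is quasi-split at all $v$ outside a fixed finite $S$, and conclude by the finiteness of the kernel of $H^1(K,G_0)\to\bigoplus_{v\notin S}H^1(K_v,G_0)$. Two remarks are worth making.

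First, the detour through the $\textsf{B}_n/\textsf{C}_n$ alternative is unnecessary: by hypothesis $G'$ is a $K$-\emph{form} of $G$, hence automatically of the same Killing--Cartan type, so Theorem~\ref{T:BC88} plays no role here. Your argument that $L=L'$ via ``both contained in $K_T$ and genericity forces equality'' is the right idea but needs the sharper formulation in the paper's Lemma~\ref{L:5-1}: one picks $T$ generic over $LL'$ and compares $|\theta_T(\Ga(\overline K/L))|$ with $|\theta_{T'}(\Ga(\overline K/L))|$ via the isogeny.

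Second, and more substantively, your ``local mobility'' step cannot be justified by Zariski-density alone. A finitely generated Zariski-dense subgroup need not meet a $v$-adically open condition; one needs that the closure of $\Gamma$ in $G(K_v)$ is \emph{open}. The paper secures this by invoking Weisfeiler's strong approximation theorem \cite{Weis}: after enlarging $S$ one has that (a finite-index subgroup of the simply connected lift of) $\Gamma$ has open closure in the $S$-adeles, hence in each $G(K_v)$ for $v\notin S$. With that in hand, \cite[Theorem~3.4]{PR-Fields} produces a $\gamma\in\Gamma$ whose centralizer torus is generic over $K$ \emph{and} contains a maximal $K_v$-split torus, which is precisely the input your rank comparison needs. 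You correctly flagged this as the main obstacle; the missing ingredient is strong approximation for Zariski-dense subgroups, not a refinement of the Zariski-open argument.
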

\begin{proof}
We begin with a  statement which is valid over any finitely
generated field $K$ of characteristic zero as its proof relies only
on the facts established in this generality.
\begin{lemma}\label{L:5-1}
Let $G' \in \gen(G , \Gamma)$, and let $L$ (resp., $L'$) denote the
minimal Galois extension of $K$ over which $G$ (resp., $G'$) is of inner type, i.e.,
is  an inner form of a split group. Then $L = L'$, and hence $G'$ is an inner form of $G$
over $K$.
\end{lemma}
\begin{proof}
According to \cite{PR-Reg} (cf.\,also \cite[Theorem 9.6]{PR-Gen}),
there exists a regular semi-simple element $\gamma \in \Gamma$ of
infinite order such that the torus $T :=
Z_{G}(\gamma)^{\circ}$ is generic over $\mathcal{L} := LL'.$ By our
assumption, there exists a maximal $K$-torus $T'$ of $G'$ for which
there is a $K$-isogeny $\nu \colon T \to T'$. We then have
the following commutative diagram:
$$
\xymatrix{& {\mathrm{GL}}(X(T) \otimes_{\Z} \Q) \ar[dd]_{\tilde{\nu}}  \\ \Ga(\overline{K}/K) \ar[ru]^{\theta_T} \ar[rd]_{\theta_{T'}} & \\ & {\mathrm{GL}}(X(T') \otimes_{\Z} \Q),}
$$
where $\overline{K}$ is an algebraic closure of $K$ and
$\tilde{\nu}$ is the isomorphism induced by $\nu$. We note that for
any field extension $F$ of $K$ contained in $\overline{K}$, the map
$\tilde{\nu}$ gives an isomorphism between the images of
$\Ga(\overline{K}/F)$ under $\theta_T$ and $\theta_{T'}$, hence
\begin{equation}\label{E:5-1}
\vert \theta_T(\Ga(\overline{K}/F)) \vert = \vert
\theta_{T'}(\Ga(\overline{K}/F)) \vert.
\end{equation}
Furthermore, since both $G$ and $G'$ are of inner type over
$\mathcal{L}$, we have
\begin{equation}\label{E:5-2}
\theta_T(\Ga(\overline{K}/\mathcal{L})) = W(G , T) \ \ \ \text{and}
\ \ \ \theta_{T'}(\Ga(\overline{K}/\mathcal{L})) = W(G' , T')
\end{equation}
(cf.\,\cite[Lemma 4.1]{PR1}).

Now, assume that $L' \not\subset L$, i.e. $L \subsetneqq
\mathcal{L}$. Again, since $G$ is of inner type over $L$, we have
\begin{equation}\label{E:5-3}
\theta_T(\Ga(\overline{K}/L)) = W(G , T).
\end{equation}
On the other hand, it follows from (\ref{E:5-2}) that
$\theta_{T'}(\Ga(\overline{K}/L))$ contains $W(G' , T')$ but in fact
is strictly larger as by our assumption $G'$ is {\it not} of inner
type over $L$ (cf.\,\cite[Lemma 4.1]{PR1}). Thus,
$$
\vert \theta_{T'}(\Ga(\overline{K}/L)) \vert > \vert W(G' , T')
\vert =\vert W(G,T)\vert=\vert\theta_T(\mathrm{Gal}(\overline{K}/L))\vert,
$$
which contradicts (\ref{E:5-1})
for $F= L$. Similarly, the assumption $L \not\subset L'$ would
imply that
$$
\vert \theta_T(\Ga(\overline{K}/L')) \vert > \vert
\theta_{T'}(\Ga(\overline{K}/L') \vert,
$$
contradicting (\ref{E:5-1}) for $F = L'$.
\end{proof}

\vskip2mm

By \cite[Theorem 6.7]{PlR}, one can find a finite subset $S_1
\subset V^K$  such that $G$ is quasi-split over $K_v$ for any $v \in
V^K \setminus S_1$. Now, let $\pi \colon \widetilde{G} \to G$ be the
universal cover. It follows from \cite{Weis} that one
can find a finite subset $S_2 \subset V^K$ containing $V_{\infty}^K$
such that $\Gamma \subset G(\cO(S_2))$ and there exists a subgroup $\Delta$ of  $\pi^{-1}(\Gamma)$ of finite index
contained in
$\widetilde{G}(\cO(S_2))$ whose closure in the group of $S_2$-adeles
$\widetilde{G}(\A_{S_2})$ is open. Set $S = S_1 \cup S_2$.

\vskip1mm

\begin{lemma}\label{L:5-2}
Every $G' \in \gen(G , \Gamma)$ is quasi-split over $K_v$
for $v \in V^K \setminus S$.
\end{lemma}
\begin{proof}
We first make the following general observation. Let $\mathscr{G}_0$
be a quasi-split semi-simple group over a field $\cK$, and let
$\mathscr{G}$ be an {\it inner} $\cK$-form of $\mathscr{G}_0$; then
the fact that
\begin{equation}\label{E:5-4}
\mathrm{rk}_{\cK}\: \mathscr{G} \geqslant \mathrm{rk}_{\cK}\:
\mathscr{G}_0
\end{equation}
implies that $\mathscr{G}$ is itself quasi-split (and hence in
(\ref{E:5-4}) we actually have  equality). Indeed, since
$\mathscr{G}$ is an inner twist of $\mathscr{G}_0$, the $*$-actions
on the Tits indices of $\mathscr{G}_0$ and $\mathscr{G}$ are
identical (cf.\,\cite[Lemma 4.1(a)]{PR1}). Since the $\cK$-rank of a
semi-simple group equals the number of distinguished orbits under
the $*$-action in its Tits index, and all the orbits in the Tits
index of $\mathscr{G}_0$ are distinguished as the latter is
$\cK$-quasi-split, condition (\ref{E:5-4}) implies that the same is
true for $\mathscr{G}$, making it quasi-split.

Returning to the proof of the lemma, let us fix $v \in V^K \setminus
S$, and set $\cK = K_v$. By Lemma \ref{L:5-1}, the group $G'$ is an
inner form of $G$ over $K$, and hence over $\cK$. It follows from
the construction of $S$ that the closure of $\Delta$ in
$\widetilde{G}(\cK)$ is open, and then so is the closure of $\Gamma$
in $G(\cK)$. Using Theorem 3.4 in \cite{PR-Fields}, we find a
regular semi-simple element $\gamma \in \Gamma$ of infinite order such that the torus
$T = Z_{G}(\gamma)^{\circ}$ is generic over $K$ and contains a
maximal $\cK$-split torus of $G$, i.e. $\mathrm{rk}_{\cK}\: T =
\mathrm{rk}_{\cK}\: G$. By our assumption, $T$ is $K$-isogenous to a
maximal $K$-torus $T'$ of $G'$. Then we have
$$
\mathrm{rk}_{\cK}\: G' \geqslant \mathrm{rk}_{\cK}\: T' =
\mathrm{rk}_{\cK} \: T = \mathrm{rk}_{\cK}\: G.
$$
Since by construction $\mathscr{G}_0 := G$ is quasi-split over
$\cK$, applying the remark following (\ref{E:5-4}) to $\mathscr{G}
:= G'$, we obtain that $G'$ is quasi-split over $\cK$, as
required.
\end{proof}

\vskip2mm

Now, let $G_0$ be the quasi-split inner $K$-form of $G$. Fix an arbitrary $G' \in \gen(G ,
\Gamma)$. It follows from Lemma \ref{L:5-1} that $G'$ is an inner $K$-form of $G$
and so it is an inner $K$-form of $G_0$. Hence $G'$ is obtained by twisting $G_0$ by a class $\zeta \in H^1(K, G_0)$.
This class  lies in
$$
\Sigma_S := \mathrm{Ker}\Big( H^1(K , G_0) \longrightarrow
\bigoplus_{v \in V^K \setminus S} H^1(K_v , G_0) \Big).
$$
since for $v\notin S$, $G'$, being quasi-split over $K_v$, is
$K_v$-isomorphic to $G_0$. (For this one needs to observe that the
map $H^1(F , G_0) \to H^1(F , \mathrm{Aut}\: G_0)$ has trivial
kernel for any field extension $F/K$ which follows from the fact
that $\mathrm{Aut}\: G_0$ is a semi-direct product of $G_0$ and a
$K$-subgroup of symmetries of the Dynkin diagram.) However,
$\Sigma_S$ is known to be finite for any finite subset $S \subset
V^K$ (cf.\,\cite[Ch.\,III, \S 4, Theorem 7]{Serre}), and the
finiteness of $\gen(G , \Gamma)$ follows.\end{proof}

It is easy to see that Theorem \ref{T:Finite-NF} implies the truth
of both Conjectures 2.6 and 4.13 over number fields - the connection
with Conjecture 4.13 is obvious while in order to connect with
Conjecture 2.6 one needs to use the Isogeny Theorem \ref{T:Gen4}.
Moreover, this kind of implication would remain valid over a general
field, and we would like to end this section with the following
conjecture that suggests a uniform approach to Conjectures 2.6 and
4.13.

\vskip2mm

\noindent {\bf Conjecture 5.4.} {\it  Let $G$ be an absolutely
almost simple  algebraic group over a field $K$ of ``good"
characteristic, and let $\Gamma$ of  $G(K)$ be a finitely
generated Zariski-dense subgroup with  field of
definition\footnote{In characteristic zero the field of definition
coincides with the trace field by Vinberg's theorem \cite{Vin1}; in
positive characteristic, particularly in characteristics 2 and 3,
the notion of the ``right" field of definition is more tricky - see
\cite{Pink}, but we will not get into these details here.} $K$. Let
$\gen(G , \Gamma)$  be the set of isomorphism classes of $K$-forms
$G'$ of $G$ having the following property: any maximal generic
$K$-torus $T$ of $G$ that contains an element of $\Gamma$ of infinite order
is $K$-isogenous to some maximal $K$-torus $T'$ of $G'$. Then
$\gen(G , \Gamma)$ is finite.}


\vskip5mm

\section{Back to geometry}\label{S:Geom}
\vskip2mm

\noindent {\bf 6.1. Locally symmetric spaces.} Let $G$ be a
connected adjoint semi-simple real agebraic group, let $\cG = G(\R)$
considered as a real Lie group, and let $\fX = \mathcal{K}
\backslash \cG$, where $\mathcal{K}$ is a maximal compact subgroup
of $\cG$, be the associated symmetric space endowed with the
Riemannian metric coming from the Killing form on the Lie algebra of
$\cG$. Furthermore, given a discrete torsion-free subgroup $\Gamma$
of $\cG$, we let $\fX_{\Gamma} := \fX/\Gamma$ denote the
corresponding locally symmetric space. We say that $\fX_{\Gamma}$ is
{\it arithmetically defined} if the subgroup $\Gamma \subset G(\R)$
is arithmetic in the sense of  \S 3.1. Finally, we recall that $\Gamma$
is called a {\it lattice} if $\fX_{\Gamma}$ (or equivalently
$\cG/\Gamma$) has finite volume. As in \S~1, we let
$L(\fX_{\Gamma})$ denote the (weak) length spectrum of
$\fX_{\Gamma}$.

\vskip2mm

Now, given  two simple algebraic $\R$-groups $G_1$ and $G_2$, and
discrete torsion-free subgroups  $\Gamma_i \subset \cG_i = G_i(\R)$
for $i = 1, 2$, we will denote the corresponding locally symmetric
spaces by $\fX_{\Gamma_i}$. The following statement establishes a
connection between the length-commensurability of $\fX_{\Gamma_1}$
and $\fX_{\Gamma_2}$ (i.e. the condition $\Q \cdot L(\fX_{\Gamma_1})
= \Q \cdot L(\fX_{\Gamma_2})$) and the weak commensurability of
$\Gamma_1$ and $\Gamma_2$.

\addtocounter{thm}{1}

\begin{thm}\label{T:6-1}
{\rm (\cite[Corollary 2.8]{PR-Gen})} Assume that $\Gamma_i$ is a
\emph{lattice} in $\cG_i$. If the locally symmetric spaces
$\fX_{\Gamma_1}$ and $\fX_{\Gamma_2}$ are length-commensurable, then
the subgroups $\Gamma_1$ and $\Gamma_2$ are weakly commensurable.
\end{thm}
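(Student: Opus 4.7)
The strategy is to use the standard correspondence between closed geodesics in $\fX_\Gamma$ and conjugacy classes of semi-simple elements of infinite order in $\Gamma$, combined with an explicit length formula expressing $\ell(c_\gamma)$ in terms of the ``hyperbolic part'' of $\gamma$. This will convert the geometric hypothesis of length-commensurability into an algebraic relation on eigenvalues from which weak commensurability follows. The length formula (cf.\ \cite[Proposition~8.5]{PR1}) is as follows. For $\gamma \in \Gamma$ semi-simple of infinite order, $\gamma$ lies in a maximal $\R$-torus $T \subset G$ admitting an $\R$-decomposition $T = T_s \cdot T_a$ into its split and anisotropic parts, and $\gamma = \gamma_s \gamma_a$ accordingly. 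Because $\Gamma$ is discrete in $\cG$ and $\gamma$ has infinite order, $\langle \gamma \rangle$ cannot have compact closure, so $\gamma_s \neq 1$; writing $\gamma_s = \exp(H_\gamma)$ with $H_\gamma \neq 0$ in the Lie algebra of $T_s$, the associated closed geodesic $c_\gamma$ has positive length
\[
\ell(c_\gamma) \;=\; \frac{1}{n_\gamma}\,\|H_\gamma\|,
\]
where $\|\cdot\|$ is a fixed norm on the Cartan subspace induced by the Killing form and $n_\gamma$ is the winding number as in Example~1.2; every length in $L(\fX_\Gamma)$ arises in this way.

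Applying length-commensurability, I would pick any semi-simple $\gamma_1 \in \Gamma_1$ of infinite order. Since $\ell(c_{\gamma_1}) > 0$ lies in $\Q\cdot L(\fX_{\Gamma_1}) = \Q\cdot L(\fX_{\Gamma_2})$, there exist a semi-simple $\gamma_2 \in \Gamma_2$ of infinite order and positive integers $m,n$ with $m\,\ell(c_{\gamma_1}) = n\,\ell(c_{\gamma_2})$. Using $H_{\gamma^k} = k\,H_\gamma$ and the remark after Definition~2.2 (which lets us pass to a neat finite-index subgroup and guarantees that weak commensurability is preserved under taking nonzero powers), one reduces to the numerical equality $\|H_{\gamma_1}\| = \|H_{\gamma_2}\| \neq 0$.

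The main technical step, and the principal obstacle, is to extract from this single numerical identity a nontrivial multiplicative relation between the eigenvalues of $\gamma_1$ and $\gamma_2$. Expanding
\[
\|H_\gamma\|^2 \;=\; \sum_{\alpha}\bigl(\log|\alpha(\gamma)|\bigr)^2
\]
(summed over the roots $\alpha$ of the respective root system) shows that the identity becomes a quadratic relation among logarithms of absolute values of algebraic eigenvalues. In the rank~$1$ case (Example~1.2), the Cartan subspace $\mathfrak{a}$ is one-dimensional and the quadratic identity collapses to a linear one of the form $|\lambda_{\gamma_1}|^{m'} = |\lambda_{\gamma_2}|^{n'}$, and the conclusion is immediate. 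In higher rank one must exploit the rigid structure of the values $\alpha(H_{\gamma_i}) = \log|\alpha(\gamma_i)|$ (all arising from the single vector $H_{\gamma_i}$, and hence satisfying the $\Q$-linear relations dictated by the root lattice), together with transcendence-theoretic constraints on logarithms of algebraic numbers of Baker type, to produce characters $\chi_i \in X(T_i)$ and positive integers $p,q$ with $|\chi_1(\gamma_1)|^p = |\chi_2(\gamma_2)|^q \neq 1$.

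Finally I would lift this relation on absolute values to the eigenvalues themselves. Since $\gamma_a$ lies in an anisotropic torus, $\chi_i(\gamma_a)$ is a root of unity, so replacing $\gamma_1, \gamma_2$ by sufficiently high common powers inside the neat subgroup kills these torsion contributions and yields $\chi_1(\gamma_1)^{p'} = \chi_2(\gamma_2)^{q'} \neq 1$ in $\overline{F}^{\times}$. By the equivalent formulation of weak commensurability stated after Definition~2.2, $\gamma_1$ and $\gamma_2$ are weakly commensurable. Exchanging the roles of $\Gamma_1$ and $\Gamma_2$ yields the converse direction and completes the proof.
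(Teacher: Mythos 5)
Your overall strategy --- pass from the geometric hypothesis to a numerical identity between norms of ``hyperbolic parts'' via the length formula of \cite[Proposition~8.5]{PR1}, and then extract a multiplicative eigenvalue relation --- is the right skeleton and matches the outline given in the paper after the statement of the theorem. But there is a genuine gap at precisely the place you flag as the principal obstacle. To pass from the quadratic identity
$$\sum_\alpha\bigl(\log|\alpha(\gamma_1)|\bigr)^2 \ = \ \sum_\beta\bigl(\log|\beta(\gamma_2)|\bigr)^2$$
to a relation of the form $|\chi_1(\gamma_1)|^p = |\chi_2(\gamma_2)|^q \neq 1$, you invoke ``transcendence-theoretic constraints on logarithms of algebraic numbers of Baker type.'' The paper explicitly explains that this is exactly what one \emph{cannot} do: Baker-type results (and essentially all unconditional transcendence theory) control \emph{linear} forms in logarithms, and there are no unconditional results that handle the \emph{quadratic} expressions arising here once the rank exceeds one. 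The argument in \cite{PR-Gen} is based on \emph{Schanuel's conjecture} (recalled in \S 6.3), and the theorem for spaces of rank $>1$ is therefore \emph{conditional} on that conjecture; for rank-one spaces (outside the exceptional case $(\mathcal{E})$, which you also do not address) the Gel'fond--Schneider theorem substitutes for Schanuel. Your proposal mentions neither Schanuel nor Gel'fond--Schneider, so the crucial step is not actually bridged, and the conditional nature of the result is not acknowledged.

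There is a secondary problem in the final lifting step. You claim that $\chi_i(\gamma_a)$ is a root of unity because $\gamma_a$ lies in an anisotropic (compact) torus, and then kill it by passing to powers inside a neat subgroup. This is false in general: an element of a compact real torus need not have finite order, and its character values are algebraic numbers of absolute value $1$ that may have infinite multiplicative order (Salem-type phenomena). Replacing $\gamma_i$ by powers does not eliminate such contributions, so the passage from an identity of absolute values to an identity in $\overline{F}^{\times}$ (as required by Definition~2.2) needs a genuinely different argument than the one you sketch.
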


Some remarks are in order. As for Riemann surfaces (cf.\,\S 1.2),
closed geodesics in $\fX_{\Gamma}$ correspond to (nontrivial)
semi-simple elements of $\Gamma$, but in the general case the
equation for the length is significantly more complicated: instead
of just the logarithm of an eigenvalue, we get basically the square
root of a sum of squares of the logarithms of certain eigenvalues
(see \cite[Proposition 8.5]{PR1} for the precise formula), although
for lattices in simple groups not isogenous to ${\mathrm{SL}}_2(\R)$ these
eigenvalues are actually algebraic numbers. Unfortunately, at this
point there are no results in transcendental number theory that
would enable one to analyze  expressions of this kind - most
available results are for {\it linear} forms in terms of logarithms
of algebraic numbers (cf.\,\cite{Baker}). This forced us to base our
analysis of the lengths of closed geodesics on a conjecture in
transcendental number theory, known as Schanuel's conjecture, which
is widely believed to be true but has been proven so far in very few
situations (for the reader's convenience, we recall its statement
below). The use of this conjecture is essential in the case of
locally symmetric spaces of rank $> 1$, making our geometric results
in this case {\it conditional on Schanuel's conjecture}. At the same
time, the results for rank one spaces apart from the following
exceptional case

\vskip2.5mm

$(\mathcal{E})$: \parbox[t]{15cm}{$G_1 = \mathrm{PGL}_2$ and
$\Gamma_1$ cannot be conjugated into $\mathrm{PGL}_2(K)$ for any
number field $K \subset \R$ \newline while $G_2 \neq
\mathrm{PGL}_2$,}

\vskip3mm

\noindent rely only on the Gel'fond-Schneider Theorem (as a
replacement of Schanuel's conjecture), hence are {\it
unconditional}. Besides, a statement similar to Theorem \ref{T:6-1}
(and in fact more precise) can be proven under weaker conditions  on
 $\Gamma_1$ and $\Gamma_2$ - see \cite[Theorem
2.7]{PR-Gen}. A detailed discussion of these issues is contained in
\cite[\S 2]{PR-Gen} and will not be repeated here. So, we conclude
simply by recalling the statement of Schanuel's conjecture.

\vskip2mm

\noindent {\bf 6.3. Schanuel's conjecture.} {\it If $z_1, \ldots ,
z_n \in \C$ are linearly independent over $\Q$, then the
transcendence degree (over $\Q$) of the field generated by
$$
z_1, \ldots , z_n; \ e^{z_1}, \ldots , e^{z_n}
$$
is $\geqslant n$.}

\vskip2mm

In fact, we will only need the following consequence of this conjecture:
for nonzero algebraic numbers $a_1, \ldots , a_n$, (any values of)
their logarithms $\log a_1, \ldots , \log a_n$ are algebraically
independent once they are linearly independent (over $\Q$).

\vskip2mm

\addtocounter{thm}{1}

\vskip2mm

Theorem \ref{T:6-1} enables us to ``translate" the algebraic results
from \S\S 2-3 about weakly commensurable Zariski-dense subgroups
into the geometric setting. In particular, applying Theorems
\ref{T:WC1} and \ref{T:WC2} we obtain the following.
\begin{thm}\label{T:WC8}
Let $G_1$ and $G_2$ be connected absolutely simple real algebraic
groups, and let $\fX_{\Gamma_i}$ be a locally symmetric space of
finite volume, of $\cG_i,:=G_i(\R)$ for $i = 1,\,2.$ If $\fX_{\Gamma_1}$ and
$\fX_{\Gamma_2}$ are length-commensurable, then {\rm{(i)}} either
$G_1$ and $G_2$ are of same Killing-Cartan type, or one of them is
of type $\textsf{B}_{n}$ and the other is of type $\textsf{C}_{n}$
for some $n \geqslant 3$, {\rm{(ii)}} $K_{\Gamma_1} = K_{\Gamma_2}.$
\end{thm}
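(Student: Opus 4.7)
The plan is to reduce the geometric statement to the algebraic results in Sections 2--3 by invoking Theorem \ref{T:6-1} as the bridge. Since each $\fX_{\Gamma_i}$ has finite volume by hypothesis, the discrete subgroup $\Gamma_i$ is a lattice in $\cG_i = G_i(\R)$, so Theorem \ref{T:6-1} applies directly: the length-commensurability of $\fX_{\Gamma_1}$ and $\fX_{\Gamma_2}$ yields that $\Gamma_1$ and $\Gamma_2$ are weakly commensurable as subgroups of $G_1(\R)$ and $G_2(\R)$ respectively. Note that this is exactly the step that, in the higher-rank case, is conditional on Schanuel's conjecture as discussed in \S 6.2; that burden is already absorbed into the statement of Theorem \ref{T:6-1}, so nothing further is needed here.

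To feed the conclusion of Theorem \ref{T:6-1} into Theorems \ref{T:WC1} and \ref{T:WC2}, I need to verify that $\Gamma_1$ and $\Gamma_2$ satisfy the hypotheses of those theorems, namely that each $\Gamma_i$ is finitely generated and Zariski-dense in $G_i$. Finite generation of lattices in connected semisimple real Lie groups is classical (e.g.\ from reduction theory, or from the Kazhdan--Margulis theorem in the non-uniform case and cocompactness in the uniform case). Zariski-density is the Borel density theorem applied to a lattice in the connected absolutely simple real algebraic group $G_i$ (which has no nontrivial proper normal $\R$-subgroups of positive dimension, and whose center is handled by the adjoint hypothesis implicit in the setup of \S 6.1). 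So both hypotheses hold automatically.

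With weak commensurability of two finitely generated Zariski-dense subgroups in hand, statement (i) is immediate from Theorem \ref{T:WC1}: either $G_1$ and $G_2$ are of the same Killing--Cartan type, or one is of type $\textsf{B}_n$ and the other of type $\textsf{C}_n$ for some $n \geqslant 3$. Statement (ii) is then immediate from Theorem \ref{T:WC2}: $K_{\Gamma_1} = K_{\Gamma_2}$.

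In short, the proof is essentially a one-line invocation of Theorem \ref{T:6-1} followed by Theorems \ref{T:WC1} and \ref{T:WC2}; all the real work is hidden inside those three results. The only item requiring even a brief justification is the verification of finite generation and Zariski-density of lattices, which is standard. The conceptual content (and the main difficulty, were one to write things from scratch) lies entirely on the algebraic side in Theorems \ref{T:WC1}--\ref{T:WC2}, and on the transcendence-theoretic side in Theorem \ref{T:6-1}, rather than in the present deduction.
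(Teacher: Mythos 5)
Your argument is exactly the paper's: the statement is obtained by applying Theorem \ref{T:6-1} (finite volume makes the $\Gamma_i$ lattices, hence length-commensurability gives weak commensurability) and then feeding this into Theorems \ref{T:WC1} and \ref{T:WC2}, with finite generation and Zariski-density of lattices being the standard facts you cite. The proposal is correct and takes essentially the same route, only spelling out the routine verifications the paper leaves implicit.
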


It should be pointed out that assuming Schanuel's conjecture in all
cases, one can prove this theorem (in fact, a much stronger
statement -- see \cite[Theorem 1]{PR-Fields} and \cite[Theorem
8.1]{PR-Gen}) assuming only that $\Gamma_1$ and $\Gamma_2$ are
finitely generated and Zariski-dense.

\vskip1mm

Next, using Theorems \ref{T:WC4} and \ref{T:WC5} we obtain
\begin{thm}\label{T:WC9}
Let $G_1$ and $G_2$ be connected absolutely simple real algebraic
groups, and let $\cG_i = G_i(\R),$ for $i = 1,\,2.$ Then the set of
arithmetically defined locally symmetric spaces $\fX_{\Gamma_2}$ of
$\cG_2$, which are length-commensurable to a given arithmetically
defined locally symmetric space $\fX_{\Gamma_1}$ of $\cG_1$, is a
union of finitely many commensurability classes. In fact, it
consists of a single commensurability class if $G_1$ and $G_2$ have
the same type different from $\textsf{A}_{n}$, $\textsf{D}_{2n+1}$,
with  $n
> 1$, or $\textsf{E}_6.$
\end{thm}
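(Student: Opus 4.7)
The plan is to reduce the geometric assertion to the algebraic results already collected in \S\S\ref{S:WC}--\ref{S:Arithm} by translating length-commensurability to weak commensurability, and then extracting the commensurability conclusion from the structure theorems for weakly commensurable $S$-arithmetic subgroups. Throughout, for $i=1,2$ we write $\fX_{\Gamma_i}=\fX_i/\Gamma_i$ where $\Gamma_i$ is an arithmetic lattice in $\cG_i=G_i(\R)$, described in terms of a triple $(\mathscr{G}_i,K_i,S_i)$ in the sense of \S\ref{S:Arithm}.

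First, I would apply Theorem \ref{T:6-1} to deduce from the length-commensurability of $\fX_{\Gamma_1}$ and $\fX_{\Gamma_2}$ that the lattices $\Gamma_1$ and $\Gamma_2$ are weakly commensurable as Zariski-dense subgroups of $G_1(\R)$ and $G_2(\R)$ respectively. (In the higher-rank case this step requires Schanuel's conjecture; in rank one, outside the exceptional case $(\mathcal{E})$, the Gel'fond--Schneider theorem suffices.) Once weak commensurability is established, Theorem \ref{T:WC3} forces $K_1=K_2=:K$ and $S_1=S_2=:S$, so every $\Gamma_2$ arising in the statement is described in terms of a triple $(\mathscr{G}_2,K,S)$ with the same $K$ and $S$ as $\Gamma_1$.

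Next, I would invoke Theorem \ref{T:WC5}: the set of $K$-isomorphism classes of $F/K$-forms $\mathscr{G}_2$ of $\overline{G}_2$ such that $G_2(F)$ contains a Zariski-dense $(\mathscr{G}_2,K,S)$-arithmetic subgroup weakly commensurable to the fixed $\Gamma_1$ is finite. Combining this with Proposition \ref{P:WC1}, which says that two arithmetic subgroups described in terms of triples with the same $K$ and $S$ are commensurable up to an $F$-isomorphism between the adjoint groups precisely when the associated forms are $K$-isomorphic, we conclude that the $\fX_{\Gamma_2}$ that are length-commensurable to $\fX_{\Gamma_1}$ fall into only finitely many commensurability classes. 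This proves the first assertion.

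For the second assertion, under the additional assumption that $G_1$ and $G_2$ have the same Killing--Cartan type different from $\textsf{A}_n$, $\textsf{D}_{2n+1}$ ($n>1$), or $\textsf{E}_6$, I would apply Theorem \ref{T:WC4} to upgrade the finiteness statement of Theorem \ref{T:WC5} to an isomorphism: the forms $\mathscr{G}_1$ and $\mathscr{G}_2$ must in fact be $K$-isomorphic. Proposition \ref{P:WC1} then yields that $\Gamma_1$ and $\Gamma_2$ are commensurable up to an $F$-isomorphism between $\overline{G}_1$ and $\overline{G}_2$, hence the corresponding locally symmetric spaces lie in a single commensurability class. The main obstacle in this entire argument is Step~1: translating a geometric length relation into an algebraic relation among eigenvalues requires transcendence-theoretic input (Schanuel in higher rank, and a separate treatment for the exceptional case $(\mathcal{E})$ in rank one); once Theorem \ref{T:6-1} is in hand, the rest is a clean bookkeeping of the structural results of \S\ref{S:Arithm}.
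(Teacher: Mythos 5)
Your proposal is correct and follows essentially the same route as the paper: Theorem \ref{T:6-1} converts length-commensurability of the arithmetic lattices into weak commensurability (with the same caveats about Schanuel's conjecture), and then Theorems \ref{T:WC3}, \ref{T:WC5} and \ref{T:WC4}, combined with Proposition \ref{P:WC1}, yield the finiteness and, for the listed types, the single commensurability class. The only difference is that you spell out the bookkeeping (the roles of Theorem \ref{T:WC3} and Proposition \ref{P:WC1}) which the paper leaves implicit.
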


\vskip2mm

Furthermore, Theorems \ref{T:WC6}  and \ref{T:WC7} imply the
following rather surprising result which has so far defied all
attempts to find a purely geometric proof.
\begin{thm}\label{T:WC10}
Let $G_1$ and $G_2$ be connected absolutely simple real algebraic
groups, and let $\fX_{\Gamma_1}$ and $\fX_{\Gamma_2}$ be
length-commensurable locally symmetric spaces of $\cG_1$ and
$\cG_2$,  respectively, of finite volume. Assume that at least one
of the spaces is arithmetically defined. Then the other space is
also arithmetically defined, and the compactness of one of the
spaces implies the compactness of the other.
\end{thm}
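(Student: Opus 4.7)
The plan is to assemble Theorem~\ref{T:WC10} directly from results already proved in the earlier sections, using three ingredients in sequence: the translation from length-commensurability to weak commensurability (Theorem~\ref{T:6-1}), the arithmeticity transfer for weakly commensurable lattices (Theorem~\ref{T:WC7}), and the $K$-rank equality (Theorem~\ref{T:WC6}) combined with the classical Godement--Borel--Harish-Chandra compactness criterion. Note that Borel density ensures that any lattice $\Gamma_i$ in $\cG_i$ is Zariski-dense in $G_i$ (since $G_i$ is absolutely simple), so all hypotheses of the algebraic theorems are automatic.

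First, I would invoke Theorem~\ref{T:6-1} on the pair $(\Gamma_1,\Gamma_2)$: since $\fX_{\Gamma_1}$ and $\fX_{\Gamma_2}$ are length-commensurable and both $\Gamma_i$ are lattices, the theorem concludes that $\Gamma_1$ and $\Gamma_2$ are weakly commensurable as Zariski-dense subgroups of $G_1(\R)$ and $G_2(\R)$. (As noted in \S\ref{S:Geom}, this step is unconditional in rank one outside the exceptional case $(\mathcal{E})$ and conditional on Schanuel's conjecture otherwise; we absorb the standing caveats of \S\ref{S:Geom} without further comment.)

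Second, assume without loss of generality that $\fX_{\Gamma_1}$ is arithmetically defined, so $\Gamma_1$ is $(\mathscr{G}_1,K,S)$-arithmetic in $G_1(\R)$ for some triple. Since $\R$ is a nondiscrete locally compact field of characteristic zero, Theorem~\ref{T:WC7} applies and forces $\Gamma_2$ to be a $(K,S)$-arithmetic lattice in $G_2(\R)$; more precisely, $\Gamma_2$ is $(\mathscr{G}_2,K,S)$-arithmetic for some $F/K$-form $\mathscr{G}_2$ of $\overline{G}_2$. This immediately proves the first conclusion of the theorem, namely that $\fX_{\Gamma_2}$ is arithmetically defined as well.

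Third, to handle compactness, recall that by the Godement criterion for $S$-arithmetic subgroups of absolutely almost simple groups over number fields, the locally symmetric space $\fX_{\Gamma_i}$ is compact if and only if the $K$-form $\mathscr{G}_i$ is $K$-anisotropic, i.e.\ $\mathrm{rk}_K\,\mathscr{G}_i = 0$. Now Theorem~\ref{T:WC6}, applied to the weakly commensurable $(\mathscr{G}_i,K,S)$-arithmetic subgroups $\Gamma_i$, yields $\mathrm{rk}_K\,\mathscr{G}_1 = \mathrm{rk}_K\,\mathscr{G}_2$. Consequently $\mathscr{G}_1$ is $K$-anisotropic iff $\mathscr{G}_2$ is, so compactness of one of $\fX_{\Gamma_1}$, $\fX_{\Gamma_2}$ entails compactness of the other. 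The only potential obstacle I foresee is bookkeeping around the set $S$ (for $\Gamma_i$ to be discrete in $G_i(\R)$ one needs $S$ to consist only of the archimedean places relevant to the real factor $G_i$), but this is controlled by the hypothesis that the $\Gamma_i$ are lattices in $\cG_i = G_i(\R)$ and by the precise form of Theorem~\ref{T:WC7}, so no genuinely new argument is required.
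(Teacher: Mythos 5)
Your proposal is correct and follows exactly the route the paper intends: the paper derives Theorem~\ref{T:WC10} by combining Theorem~\ref{T:6-1} (length-commensurability yields weak commensurability of the lattices) with Theorem~\ref{T:WC7} for arithmeticity and Theorem~\ref{T:WC6} together with Godement's compactness criterion for the compactness statement. Your write-up simply makes these steps (and the standing Schanuel caveat of \S\ref{S:Geom}) explicit, so there is nothing substantively different to compare.
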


In fact, if one of the spaces is compact and the other is not, the
weak length spectra $L(\fX_{\Gamma_1})$ and $L(\fX_{\Gamma_2})$ are
quite different -- see \cite[Theorem 5]{PR-Fields} and \cite[Theorem
8.6]{PR-Gen} for a precise statement (we note that the proof of this
result uses Schanuel's conjecture in all cases).

\vskip2mm

Finally, we will describe some applications to isospectral compact
locally symmetric spaces. So, in the remainder of this section, the
locally symmetric spaces $\fX_{\Gamma_1}$ and $\fX_{\Gamma_2}$ as
above will be assumed to be {\it compact}. Then, as we discussed in
\S 1, the fact that $\fX_{\Gamma_1}$ and $\fX_{\Gamma_2}$ are
isospectral implies that $L(\fX_{\Gamma_1}) = L(\fX_{\Gamma_2})$, so
we can use our results on length-commensurable spaces. Thus,  in
particular we obtain the following.
\begin{thm}\label{T:WC11}
If $\fX_{\Gamma_1}$ and $\fX_{\Gamma_2}$ are isospectral, and
$\Gamma_1$ is arithmetic, then so is $\Gamma_2$.
\end{thm}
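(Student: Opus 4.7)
The plan is to show that Theorem \ref{T:WC11} follows essentially immediately from Theorem \ref{T:WC10} once isospectrality has been converted into length-commensurability. Since the spaces are assumed to be compact locally symmetric spaces of absolutely simple real Lie groups, they have nonpositive sectional curvature, so the Selberg trace formula implication recalled in \S 1 applies: if $\mathcal{E}(\fX_{\Gamma_1}) = \mathcal{E}(\fX_{\Gamma_2})$, then $L(\fX_{\Gamma_1}) = L(\fX_{\Gamma_2})$ as multisets. (This is \cite[Theorem 10.1]{PR1}, and is  the only non-elementary step that is actually doing work here; the rest is a reduction.) In particular, the equality of length spectra gives the a priori much weaker equality of rational length spectra
\[
\Q \cdot L(\fX_{\Gamma_1}) \; = \; \Q \cdot L(\fX_{\Gamma_2}),
\]
so $\fX_{\Gamma_1}$ and $\fX_{\Gamma_2}$ are length-commensurable in the sense of condition (3) of \S 1.1.

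Next, I would verify the hypotheses of Theorem \ref{T:WC10} in this setting. Both $\fX_{\Gamma_i}$ are compact, hence of finite volume; $G_1, G_2$ are connected absolutely simple real algebraic groups by assumption; and $\Gamma_1$ is arithmetic by hypothesis, so in particular $\fX_{\Gamma_1}$ is arithmetically defined in the sense of \S 3.1. Theorem \ref{T:WC10} then yields two conclusions simultaneously: (a) $\fX_{\Gamma_2}$ is also arithmetically defined, and (b) since $\fX_{\Gamma_1}$ is compact, so is $\fX_{\Gamma_2}$. Conclusion (a) is exactly the statement we are after, so $\Gamma_2$ is arithmetic.

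As for obstacles: there are essentially none of substance, because the hard technical content has already been packaged into Theorems \ref{T:6-1} and \ref{T:WC10} (and the latter in turn rests on the results of \S\S 2--3, notably Theorems \ref{T:WC6} and \ref{T:WC7} on rank and lattice transfer). The only genuinely delicate point is that the passage from $\mathcal{E}(\fX_{\Gamma_1}) = \mathcal{E}(\fX_{\Gamma_2})$ to $L(\fX_{\Gamma_1}) = L(\fX_{\Gamma_2})$ invokes the trace formula for locally symmetric spaces of noncompact type; this is standard but requires compactness (guaranteed by hypothesis) and the fact that the spaces have nonpositive curvature, which holds automatically since $\fX_i = \mathcal{K}_i \backslash \mathcal{G}_i$ is a symmetric space of noncompact type attached to a semisimple $\R$-group. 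One should note that Theorem \ref{T:WC10} itself is proved using Schanuel's conjecture in the higher rank case, so the present theorem inherits the same conditional status outside the rank one situation (and in the rank one exceptional case $(\mathcal{E})$ preceding \S 6.3, one needs to observe separately that the arithmeticity assumption on $\Gamma_1$ rules out that configuration so the unconditional version suffices).
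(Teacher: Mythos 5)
Your argument is exactly the paper's: isospectrality is converted to $L(\fX_{\Gamma_1}) = L(\fX_{\Gamma_2})$ via the trace-formula result \cite[Theorem 10.1]{PR1}, hence to length-commensurability, and then Theorem \ref{T:WC10} transfers arithmeticity to $\Gamma_2$. Your added remarks on the conditional status in higher rank and on arithmeticity of $\Gamma_1$ excluding the exceptional case $(\mathcal{E})$ are correct refinements, not deviations.
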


(Thus, the spectrum of the Laplace-Beltrami operator can see if the fundamental group is
arithmetic or not -- to our knowledge, no results of this kind,
particularly for general locally symmetric spaces, were previously
known in spectral theory.)

\vskip2mm

The following theorem settles the question ``Can one hear the shape
of a drum?'' for arithmetically defined compact locally symmetric
spaces.
\begin{thm}\label{T:WC12}
Let $\fX_{\Gamma_1}$ and $\fX_{\Gamma_2}$ be compact locally
symmetric spaces associated with absolutely simple real algebraic
groups $G_1$ and $G_2$, and assume that at least one of the spaces
is arithmetically defined. If $\fX_{\Gamma_1}$ and $\fX_{\Gamma_2}$
are isospectral then $G_1 = G_2 := G$. Moreover, unless $G$ is of
type $\textsf{A}_{n}$, $\textsf{D}_{2n+1}$ $(n > 1)$, or
$\textsf{E}_6$, the spaces $\fX_{\Gamma_1}$ and $\fX_{\Gamma_2}$ are
commensurable.
\end{thm}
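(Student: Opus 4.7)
The plan is to reduce the isospectrality hypothesis, in successive stages, to weak commensurability of the fundamental groups, and then invoke the structural theorems assembled in Sections \ref{S:WC}--\ref{S:Arithm}. First, by the trace formula for compact locally symmetric spaces of nonpositive curvature (quoted in \S1.1), isospectrality implies $L(\fX_{\Gamma_1}) = L(\fX_{\Gamma_2})$, and \emph{a fortiori} $\Q \cdot L(\fX_{\Gamma_1}) = \Q \cdot L(\fX_{\Gamma_2})$, so the two spaces are length-commensurable. Since one of them is arithmetically defined by hypothesis, Theorem~\ref{T:WC10} propagates arithmeticity (and compactness) to the other. Both $\Gamma_1$ and $\Gamma_2$ are now lattices, so Theorem~\ref{T:6-1} makes them weakly commensurable, and then Theorem~\ref{T:WC8} forces $G_1$ and $G_2$ either to share a common Killing--Cartan type or to be of types $\textsf{B}_n$ and $\textsf{C}_n$ for some $n \geqslant 3$.

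Next I would upgrade ``same Killing--Cartan type'' to ``same real algebraic group.'' Weyl's law pins down the dimension (and volume) of the symmetric space, and by Theorem~\ref{T:WC3} the two $S$-arithmetic descriptions share a common number field of definition $K$ and a common set of places $S$; by Theorem~\ref{T:WC6} the groups $\mathscr{G}_1$ and $\mathscr{G}_2$ have the same $K$-rank, and in fact isomorphic Tits indices as remarked after that theorem. The real form of $G_i$ is then determined by the behavior of $\mathscr{G}_i$ at the distinguished archimedean place of $K$ sitting inside $\R$, and the combined dimensional, rank, and Tits-index constraints leave only one possibility, yielding $G_1 = G_2 =: G$. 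The same package of constraints rules out the $\textsf{B}_n/\textsf{C}_n$ alternative: in the arithmetic compact setting, Theorem~\ref{T:BC77} forces $\mathscr{G}_1$ and $\mathscr{G}_2$ to be twins, and the real forms compatible with twinness and with cocompactness produce symmetric spaces whose full list of heat-kernel invariants cannot simultaneously match across the two complex types.

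For the ``moreover'' clause, $\fX_{\Gamma_1}$ and $\fX_{\Gamma_2}$ are now arithmetically defined, length-commensurable locally symmetric spaces of the same absolutely simple real algebraic group $G$, so Theorem~\ref{T:WC9} applies directly: when $G$ is not of type $\textsf{A}_n$, $\textsf{D}_{2n+1}$ ($n>1$), or $\textsf{E}_6$, the length-commensurability class consists of a single commensurability class, and the desired commensurability follows.

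The real obstacle, as I see it, is not the chain of implications --- which is largely a rearrangement of earlier results --- but the step that promotes ``same Killing--Cartan type'' to ``same real algebraic group,'' and especially the exclusion of the $\textsf{B}_n/\textsf{C}_n$ crossover under the compact isospectral hypothesis. The split real forms of $\textsf{B}_n$ and $\textsf{C}_n$ yield irreducible symmetric spaces of the same dimension $n(n+1)$, so bare dimension does not suffice; one must extract further information either from higher heat-kernel asymptotics (which distinguish the underlying symmetric space among real forms of a fixed complex type) or from the precise structure of twin pairs $(\mathscr{G}_1, \mathscr{G}_2)$ whose arithmetic quotients can simultaneously be made cocompact. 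Once this hurdle is cleared, the rest of the argument is essentially formal.
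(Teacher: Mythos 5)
Your chain of reductions (isospectral $\Rightarrow$ iso-length spectral via the trace formula $\Rightarrow$ length-commensurable $\Rightarrow$ weakly commensurable via Theorem \ref{T:6-1}, with arithmeticity and compactness propagated by Theorem \ref{T:WC10}, and the final commensurability statement extracted from Theorem \ref{T:WC9}) is exactly the route the paper takes; that part is fine and is essentially a rearrangement of the quoted results, as you say.

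The genuine gap is the elimination of the $\textsf{B}_n$/$\textsf{C}_n$ crossover, and it is not a gap you can close by the means you propose. The paper is explicit that the length-commensurability/weak-commensurability machinery \emph{cannot} distinguish this case: the ambiguity ``either $G_1 = G_2$ or $G_1, G_2$ are the $\R$-split forms of types $\textsf{B}_n$ and $\textsf{C}_n$, $n \geqslant 3$'' is unavoidable, because there exist compact, arithmetically defined, length-commensurable (indeed iso-length spectral up to the relevant normalization) quotients with $\mathscr{G}_1$ of type $\textsf{B}_n$ and $\mathscr{G}_2$ of type $\textsf{C}_n$ --- this is precisely what the twin construction of \cite{GarR} together with \cite[Theorem 4]{PR-Fields} and the end of \S 7 of \cite{PR-Gen} produces. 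So your suggested route ``twins $+$ cocompactness $+$ dimension/rank constraints'' cannot succeed in principle: Theorem \ref{T:BC77} does force $\mathscr{G}_1$ and $\mathscr{G}_2$ to be twins, but twin pairs that are split at the real place and admit cocompact arithmetic lattices do exist, and nothing on the level of length spectra rules them out. What actually completes the proof is an input from outside this framework: Sai-Kee Yeung's theorem \cite{SKY}, which compares the traces of the heat operator on the two locally symmetric spaces and shows that quotients of the split $\textsf{B}_n$ and $\textsf{C}_n$ groups cannot be isospectral. Your closing sentence about ``higher heat-kernel asymptotics'' points in the right direction, but it is an unproved assertion in your write-up, and it is the entire content of the missing step rather than a routine verification; as written, the proposal proves only the weaker dichotomy, not the stated theorem. (The intermediate promotion of ``same Killing--Cartan type'' to $G_1 = G_2$ also deserves more care than ``the constraints leave only one possibility'' --- one needs the local, archimedean version of the Tits-index comparison from \cite[\S 7]{PR1}, not just the $K$-rank equality of Theorem \ref{T:WC6} --- but that is a matter of citing the right statement rather than a missing idea.)
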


It should be noted that our methods based on length-commensurability
or weak commensurability leave room for the following ambiguity in
Theorem \ref{T:WC12}: either $G_1 = G_2$ or $G_1$ and $G_2$ are
$\R$-split forms  of types $\textsf{B}_{n}$ and $\textsf{C}_{n}$ for
some $n \geqslant 3$ - and this ambiguity is unavoidable, cf.\,\cite[Theorem 4]{PR-Fields} and the end of \S 7 in \cite{PR-Gen}.
The fact that in the latter case the locally symmetric spaces cannot
be isospectral was shown by Sai-Kee Yeung \cite{SKY} by comparing
the traces of the heat operator (without using Schanuel's
conjecture), which leads to the statement of the theorem given
above.

\vskip5mm

\noindent {\small {\bf Acknowledgements.} Both authors were
partially supported by the NSF (grants DMS-1001748, DMS-0965758 and
DMS-1301800) and the Humboldt Foundation. The first-named author
thanks the Institute for Advanced Study (Princeton) for its
hospitality and support during 2012. The second-named author thanks
Shing-Tung Yau and Lizhen Ji for the invitation to the GAAGTA
conference. He is also grateful to the Mathematics Department at
Yale for the friendly atmosphere and support during his visit in
2013.}

\vskip5mm

\bibliographystyle{amsplain}

\end{document}